\documentclass[10pt,reqno]{amsart}
\usepackage{a4wide}
\numberwithin{equation}{section}
\usepackage{mathrsfs}
\usepackage{amsfonts}
\usepackage{amsmath}
\usepackage{stmaryrd}
\usepackage{amssymb}
\usepackage{amsthm}
\usepackage{mathrsfs}
\usepackage{url}
\usepackage{amsfonts}
\usepackage{amscd}
\usepackage{indentfirst}
\usepackage{enumerate}
\usepackage{amsmath,amsfonts,amssymb,amsthm}
\usepackage{amsmath,amssymb,amsthm,amscd}
\usepackage{graphicx,mathrsfs}
\usepackage{appendix}

\usepackage[numbers,sort&compress]{natbib}
\usepackage{color}
 \usepackage[colorlinks, linkcolor=blue, citecolor=blue]{hyperref}

\newcommand\R{\mathbb R}

\newcommand\mbb\mathbb
\newcommand\mbf\mathbf
\newcommand\mcal\mathcal
\newcommand\mfrak\mathfrak
\newcommand\mrm\mathrm
\newcommand\msf\mathsf
\renewcommand\a\alpha
\renewcommand\b\beta
\newcommand\g\gamma
\newcommand\G\Gamma
\renewcommand\d\delta
\newcommand\D\Delta
\newcommand\e\varepsilon
\newcommand\z\zeta
\renewcommand\t\theta
\newcommand\Th\Theta
\newcommand\la\lambda
\newcommand\La\Lambda
\newcommand\s\sigma
\newcommand\si\varsigma
\newcommand\Si\Sigma
\newcommand\ups\upsilon
\newcommand\U\Upsilon
\newcommand\ph\varphi
\renewcommand\o\omega
\renewcommand\O\Omega
\newcommand\wt\widetilde
\newcommand\wh\widehat
\newcommand\ol\overline
\newcommand\ul\underline
\newcommand\mr\mathring
\newcommand\ub\underbrace
\newcommand\pa\partial
\newcommand\n\nabla
\newcommand\fa\forall
\newcommand\ex\exists
\newcommand\es\emptyset
\newcommand\wk\rightharpoonup
\newcommand\inc\hookrightarrow
\newcommand\linf\varliminf
\newcommand\lsup\varlimsup
\newcommand\os\overset
\newcommand\us\underset
\newcommand\sr\stackrel
\newcommand\Ot\Leftarrow
\newcommand\To\Rightarrow
\newcommand\map\mapsto
\newcommand\ot\leftarrow
\newcommand\lot\longleftarrow
\newcommand\lto\longrightarrow
\newcommand\tot\leftrightarrow
\newcommand\ltot\longleftrightarrow
\newcommand\sm\backslash
\renewcommand\Cup\bigcup
\renewcommand\Cap\bigcap
\newcommand\sub\subset
\newcommand\Sub\Subset
\newcommand\sne\subsetneq
\newcommand\bus\supset
\newcommand\Bus\Supset

\newcommand\eq\equiv
\newcommand\ox\otimes
\newcommand\Ox\bigotimes
\newcommand\pl\oplus
\newcommand\Pl\bigoplus
\newcommand\x\times
\renewcommand\c\circ
\newcommand\q\quad
\renewcommand\l\left
\renewcommand\r\right
\newcommand\fr\frac

\usepackage[numbers,sort&compress]{natbib}
\usepackage[dvipsnames]{xcolor}
\usepackage{color}

\definecolor{bondiblue}{rgb}{0.0, 0.58, 0.71}
\def\sideremark#1{\ifvmode\leavevmode\fi\vadjust{\vbox to0pt{\vss
			\hbox to 0pt{\hskip\hsize\hskip1em
				\vbox{\hsize2.1cm\tiny\raggedright\pretolerance10000
					\noindent #1\hfill}\hss}\vbox to15pt{\vfil}\vss}}}%

\newtheorem{Thm}{Theorem}[section]
\newtheorem{Lem}[Thm]{Lemma}

\newtheorem{Prop}[Thm]{Proposition}

\newtheorem{Rem}[Thm]{Remark}

\begin{document}

\title[Lane-Emden problem]
{ Precise Asymptotic estimates and Non-degeneracy of solutions to a biharmonic problem with large exponents in dimension four}
\author[X. Liang,  W. Wang]{Xiuda Liang,   Wenjie Wang}

 \address[Xiuda Liang]{School of Mathematics and Statistics, Central China Normal University, Wuhan 430079,  China}
\email{lxd2207244188@163.com}


\address[Wenjie Wang]{School of Mathematics and Statistics, Central China Normal University, Wuhan 430079, China}
\email{wjwang3269@mails.ccnu.edu.cn}

\begin{abstract}
We are concerned with the  semilinear biharmonic problem  under Dirichlet boundary conditions that
\begin{equation*}
\begin{cases}
\Delta^2 u=(u^+)^{p}  &{\text{in}~\Omega},\\[0.5mm]
 u \not\equiv 0  &{\text{in}~\Omega},\\[0.5mm]
u=\partial u / \partial \nu = 0 &{\text{on}~\partial \Omega},
\end{cases}
\end{equation*}
where $\Omega\subset \R^4$ is a smooth bounded domain and $p>1$ is sufficiently large.

 The basic asymptotic behavior and concentration phenomena of the solutions for this problem have been established in  literatures. In this work, we aim to refine some known asymptotic estimates of the solutions to be more explicit, so that we can prove the non-degeneracy  of the multi-spikes solutions  for general domains. The main methods contain ODE's theory, blow-up analysis, local Pohozaev identities and the use of Green's function and Green's representation.

\end{abstract}

\date{\today}
\maketitle
{\small
\keywords {\noindent {\bf Keywords:} {\small biharmonic problem, asymptotic behavior, non-degeneracy
}
\smallskip
\newline
\subjclass{\noindent {\bf }
}
\section{Introduction and main results}
\setcounter{equation}{0}
In this paper, we consider the following semilinear fourth-order elliptic problem
\begin{equation}\label{1.1}
\begin{cases}
\Delta^2 u=(u^+)^{p}  &{\text{in}~\Omega},\\[0.5mm]
 u \not\equiv 0  &{\text{in}~\Omega},\\[0.5mm]
u=\partial u / \partial \nu = 0 &{\text{on}~\partial \Omega},
\end{cases}
\end{equation}
where $\Omega\subset \R^4$ is a smooth bounded domain and $p>1$ is sufficiently large.

\vskip 0.2cm

 From a physical viewpoint, fourth-order equations involving the biharmonic operator and exponential nonlinearities appear in models of conformally invariant field theories, in the study of critical phenomena, and in problems related to the geometry of spacetime in four dimensions. The exponential terms often arise from the coupling of a scalar field to the curvature, which is typical in theories of gravity with conformal symmetry. In addition, the biharmonic operator $\Delta^2$ arises naturally in the study of the \( Q \)-curvature on four-dimensional Riemannian manifolds. Suppose  $(M,g)$ be a smooth four-dimensional Riemannian manifolds, the Q-curvature \cite{chang} is defined by
\[
Q_g = \frac{1}{12}\Bigl(-\Delta_g S_g + S_g^2 - 3|\mathrm{Ric}_g|^2\Bigr),
\]
where $S_g$ is the scalar curvature and $\mathrm{Ric}_g$ is the Ricci tensor of the metric $g$. Under a conformal change of metric $\tilde g = e^{2w}g$, the Q-curvature transforms according to the equation
\[
P_g w + 2Q_g = 2\tilde Q_{\tilde g} e^{4w},
\]
where $P_g$ is the \emph{Paneitz operator} \cite{paneitz}
\[
P_g := \Delta_g^2 + div_g\Bigl(\frac{2}{3}S_g\,I - 2\mathrm{Ric}_g\Bigr)d ,
\]
and $d$ is the de Rham differential. In the special case of Euclidean space $\mathbb{R}^4$ with the flat metric $g_{\text{eucl}}$,  the Paneitz operator reduces to the biharmonic operator $P_{g_{\text{eucl}}} = \Delta^2$. Consequently, the conformal transformation law becomes
\[
\Delta^2 w = \tilde Q_{\tilde g} e^{4w},
\]
and we can refer to \cite{chang, M-D} to get more details. In all, the study of biharmonic  equations is meaningful in both mathematics and physics, and has received ever-increasing interests in the last decades.


 \vskip 0.2cm
 
By standard variational methods, problem \eqref{1.1} admits at least one solution for any $p>1$  in any smooth bounded domain $\Omega$. Specially, \eqref{1.1} has a  least energy solution $u_{p}$, and  Santra-Wei \cite{santra-wei} studied the asymptotic behavior of the least energy solutions $u_{p}$, under the assumption that
\begin{equation}\label{1.2}
    \int_{\Omega}(u^+(x))^{p+1}dx^4 \leq \frac{C}{p}
\end{equation}
where they proved that
\begin{equation}\label{eq:1.3}
\lim_{p\rightarrow+\infty} p\int_{\Omega}\left|u_{p}\right|^{p+1} d x=64\pi^{2} \sqrt{e}
\end{equation}
and up to a subsequence,
$$ p u_p(x)\rightarrow 64\pi^2\sqrt{e} G\left(x, x_0\right)\text{ in } C_{loc}^4\left(\bar{\Omega}\backslash\left\{x_0\right\}\right),$$
where  $G(x,\cdot)$ is the Green's function
 of $\Delta^2$ under Dirichlet boundary condition in $\Omega$, i.e. 
\begin{equation}\label{Greenfunction}
\left\{\begin{array}{ll}\Delta_x^2 G(x, y)=\delta(x-y)&\text{ in }\Omega,\\ G(x, y)=\frac{\partial G(x, y)}{\partial \nu}=0&\text{ for } x\in\partial\Omega,\end{array}\right.
\end{equation}
and $\delta_x$ is the Dirac function. In the meanwhile, the results of \cite{santra-wei} covered multi-bubble solutions with higher energy, and they proved these asymptotic estimates in a more general setting which hold for the least energy solution with Navier boundery conditions proved by Ben Ayed-EI Mehdi-Grossi \cite{AMG}. Recently, Chen-Cheng-Zhao \cite{Chen-Cheng-Zhao} studied the asymptotic behavior of the nodal solutions concentrated at two points for $\Delta^2 u=|u|^{p-1}u$ with Dirichlet boundery conditions in bounded domain  $\Omega \subset \R^4$, where they generalized the results for $-\Delta u = |u|^{p-1}u$ in dimension two by \cite{M-I-P}.
\vskip 0.2cm

An analogous problem to \eqref{1.1} is the standard Lane-Emden problem
\begin{equation}\label{lane}
\begin{cases}
\Delta u=u^{p}  &{\text{in}~\Omega},\\[0.5mm]
 u>0  &{\text{in}~\Omega},\\[0.5mm]
u=0 &{\text{on}~\partial \Omega},
\end{cases}
\end{equation}
where $\Omega\subset \R^2$  and $p>1$ is sufficiently large. There are abound of papers focused on  the existence,  non-degeneracy, local uniqueness of the solution for problem \eqref{lane} when $p>1 $ is sufficicently large with different constraint on $\Omega$, for instance \cite{ EMP2006,DamascelliGrossiPacella,DGIP2019,CaWa,BCGP,Grossi-Ianni-Luo-Yan}, as well as the asymptotic behavior of solutions as $p \rightarrow \infty$, like \cite{Adimurthi-Grossi,EG2004,Ren-Wei1,Ren-Wei2,DGIP2018,DIP2017-1}. It is to mention that,  the uniqueness and nondegeneracy for the solutions to \eqref{lane} has been proved in any convex  domain $\Omega$ in \cite{DGIP2019} when $p$ is sufficiently large.	And  Grossi-Ianni-Luo-Yan
\cite{Grossi-Ianni-Luo-Yan} improved the asymptoic estimates on the solutions of \eqref{lane}, and obtained the non-degeneracy of the multi-spikes positive solutions, as well as the local uniqueness of the one-spike positive solution, mainly by local Pohozaev identities and ODE's methods. 

\vskip 0.2cm

As mentioned above, For the Lane-Emden problem in dimension two, the nondegeneracy  of concentrating solutions has been established by several approaches. However, in the fourth-order setting, there are few results on these quantitative properties. In this paper, we deal with the non-degeneracy  of \emph{multi-spike} solutions of \eqref{1.1} in general domains.
 Furthermore non-degeneracy plays a crucial role in bifurcation theory.

\vskip 0.2cm

Now, we recall the known asymptotic characterization of the solutions to problem \eqref{1.1},  under assumption \eqref{1.2}.
When the exponent \( p \) tends to infinity,  it is known from Ben Ayed--El Mehdi--Grossi~\cite{AMG}, Santra--Wei~\cite{santra-wei}, and Trabelsi~\cite{Trabelsi}, that the solutions of \eqref{1.1} display a concentration phenomenon.
\vskip 0.2cm

\noindent\textbf{Theorem A}~(\cite{santra-wei})\textbf{.} Let $u_p$ be a family of solutions to  \eqref{1.1} satisfying \eqref{1.2}.
Then there exist a finite number of $k$ of distinct points $x_{\infty,j}\in \Omega$, $j=1,\cdots,k$ and a subsequence of $p$ (still denoted by $p$)   such that setting
$\mathcal{S}:=\big\{x_{\infty,1},\cdots, x_{\infty,k}\big\}$,
one has
\begin{enumerate}
\item[(i)] \begin{equation}\label{11-14-03N}
\lim_{p\rightarrow +\infty} p u_{p}=64\pi^2 \sqrt{e}\sum^k_{j=1}G(x,x_{\infty,j})\,\,~\mbox{in} ~ C^2_{loc}(\Omega\backslash \mathcal{S}),
\end{equation}
the energy satisfies
\begin{equation*}
\lim_{p\rightarrow +\infty} p \int_{\Omega}|\Delta u_{p}(x)|^2dx=64\pi^2 \sqrt{e}\cdot k,
\end{equation*}

\item[(ii)]
 the concentrated points $x_{\infty,j}$, $j=1,\cdots,k$ fulfill the system
\begin{equation*}
\nabla_x H(x_j, x_j) + \sum_{l 
\neq j} 
\nabla_x G(x_j, x_l) = 0,
\end{equation*}
where
\begin{equation}\label{Hfuction}
H(x,y) = G(x,y) + \frac{\ln|x-y|
}{8\pi^2}
\end{equation}
is the regular part of the Green's function G defined by \eqref{Greenfunction}.
\end{enumerate} 
Moreover, for some small fixed $r>0$, let $x_{p,j}\in \overline{B_{2r}(x_{\infty,j})}\subset\Omega$ be the sequence defined as
\begin{equation}\label{def:xpj}
u_{p}(x_{p,j})=\max_{\overline{B_{2r}(x_{\infty,j})}}u_{p}(x),
\end{equation}
then for any $j=1,\cdots,k$, it holds
\begin{equation*}
\lim_{p\rightarrow +\infty}x_{p,j}=x_{\infty,j},
\end{equation*}
\begin{equation}\label{ConvMax}
\lim_{p\rightarrow +\infty}u_{p}(x_{p,j})=\sqrt{e},
\end{equation}
\begin{equation*}
\lim_{p\rightarrow +\infty}\varepsilon_{p,j}=0,
\end{equation*}
where    $\varepsilon_{p,j}:=\Big(p\big(u_{p}(x_{p,j})\big)^{p-1}\Big)^{-1/4}$. And setting
\begin{equation}\label{defwpj}
Z_{p,j}(y):=\frac{p}{u_{p}(x_{p,j})}\Big(u_{p}(x_{p,j}+\varepsilon_{p,j}y)-
u_{p}(x_{p,j})\Big),~y\in \Omega_{p,j}:=\frac{\Omega-x_{p,j}}{\varepsilon_{p,j}},
\end{equation}
one has
\begin{equation}\label{5-8-2}
\lim_{p\rightarrow +\infty} Z_{p,j}=Z\,\,~\mbox{in}~C^4_{loc}(\R^4),
\end{equation} 
and  \begin{equation}\label{Z(x)}
    Z(x)=-4\ln \left(1+\frac{|x|}{8 \sqrt{6}}\right)
\end{equation} is the solution of 

\begin{equation*}
\begin{cases}
\Delta^2 Z(x) = e^{Z(x)} & \text{for } x \in \mathbb{R}^4, \\
\int_{\mathbb{R}^4} e^Z dx^4 = 64\pi^2.
\end{cases}
\end{equation*}
\vskip 0.1cm

 we recall that the Robin function is defined as
\begin{equation}\label{Robinf}
R(x):=H(x,x).
\end{equation}
For $a=(a_1,\cdots,a_k)$, with $a_j\in \Omega$, $j=1,\cdots,k$ we define the Kirchoff-Routh function  $\Psi_{k}: \Omega^{k} \rightarrow \R$ as
\begin{equation}\label{stts}
\Psi_{k}(a):= \sum^k_{j=1} \Psi_{k,j}(a),\quad~\mbox{ with }~\Psi_{k,j}(a):=  R\big(a_j\big)+ \sum^{k}_{m=1,m\neq j} G\big(a_j,a_m\big).
\end{equation}

This paper is mainly inspired by Grossi-Ianni-Luo-Yan \cite{Grossi-Ianni-Luo-Yan}, where they proved the nondegenracy and local uniqueness of the solutions for Lane-Emden problem \eqref{lane} by enhancing the asymptotic analysis. Therefore, we firstly give some  improved  asymptotic results for the multi-spike solutions of \eqref{1.1}  stated in 
Theorem $A$, which will play an important role in the proof of the non-degeneracy property, i.e. the proof of  Theorem \ref{th1.1}.

\begin{Thm}\label{th1} Let $u_p$ be a family of solutions to \eqref{1.1} and \eqref{1.2},
$x_{\infty,j}$, $x_{p,j}$,  $\e_{p,j}$ and $ \eta_{p,j}$ with $j=1,\cdots,k$ be defined in Theorem A above, then
 for any   fixed small  constant $\delta>0$, it holds
\begin{equation}\label{5-7-52}
u_{p}(x_{p,j})=\sqrt{e}\left(1-\frac{\ln p}{p-1}+
\frac{ 1}{ p}\Big(-32\pi^2 \Psi_{k,j}(x_{\infty})+2\ln(8\sqrt6)+\frac83\Big)
\right)+O\Big(\frac{1}{p^{2-\delta}}\Big)\,\,~\mbox{for}~j=1,\cdots,k,
\end{equation}
where $\Psi_{k,j}$ is the function in \eqref{stts} and $x_{\infty}:=\big(x_{\infty,1},\cdots,x_{\infty,k}\big)$
.
Consequently,
\begin{equation}\label{nn3-29-03}
\e_{p,j}= e^{-\frac{p}8}\Bigl(
 e^{\big( 8\pi^2 \Psi_{k,j}(x_{\infty})-\frac12\ln(8\sqrt6)
-\frac{13}{24}\big) }+O\big(\frac{1}{p^{1-\delta}}\big)\Bigr),
\end{equation}
and
\begin{equation}\label{3-29-03}
\frac{\e_{p,j}}{\e_{p,s}}=e^{8\pi^2 \big(\Psi_{k,j}(x_{\infty})-\Psi_{k,s}(x_{\infty})\big)}
 +O\big(\frac{1}{p^{1-\delta}}\big)\,\, ~\mbox{for}~1\leq j,s\leq k.
\end{equation}
Moreover,  it holds
\begin{equation}\label{lst}
\eta_{p,j}=Z+\frac{\eta_0}{p} +O\left(\frac{1}{p^2}\right)~\mbox{in}~C^4_{loc}(\R^4),
\end{equation}
where $\eta_0$ solves the non-homogeneous linear equation
\begin{equation}\label{lst1}
\Delta^2\eta_0-e^Z\eta_0
=
-\frac{Z^2}{2}e^Z
\quad\text{in }\mathbb R^4.
\end{equation}
\end{Thm}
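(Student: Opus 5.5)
The plan is to follow the scheme of Grossi--Ianni--Luo--Yan, adapted to $\Delta^2$. Throughout, $\eta_{p,j}$ is understood as the rescaled function $Z_{p,j}$ from \eqref{defwpj}.

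\textbf{Step 1: expansion of the rescaled profile.}
Write $\eta_{p,j}=Z_{p,j}$. By \eqref{defwpj} it satisfies
\[
\Delta^2\eta_{p,j}=\Big(1+\frac{\eta_{p,j}}{p}\Big)^p
\quad\text{in }\{\eta_{p,j}>-p\},
\]
with $\eta_{p,j}(0)=0$ and $\eta_{p,j}\le 0$ locally.

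First I obtain a uniform decay bound $\eta_{p,j}(y)\ge -(4+o(1))\ln(1+|y|)$ on $|y|\le r/\e_{p,j}$. This comes from the Green representation of $u_p$ together with the energy bound \eqref{1.2}. It gives integrable control of the right-hand side, because $(1+\eta/p)^p\le e^{\eta}$.

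Next I expand
\[
\Big(1+\frac{\eta}{p}\Big)^p=e^{\eta}\Big(1-\frac{\eta^2}{2p}+O\big(\tfrac{|\eta|^3+\eta^4}{p^2}\big)\Big).
\]
Set $w_p:=p(\eta_{p,j}-Z)$. Then
\[
\Delta^2 w_p-e^{Z}w_p=-\frac{Z^2}{2}e^{Z}+o(1),
\]
with $w_p(0)=0$ and logarithmic growth of $w_p$.

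To pass to the limit I use the nondegeneracy of the radial linearized operator $\Delta^2-e^{Z}$. Its bounded kernel is spanned by $y\cdot\nabla Z+4$ and $\partial_i Z$. With the normalizations $\eta_{p,j}(0)=0$ and $\nabla\eta_{p,j}(0)=0$ (since $x_{p,j}$ is a maximum point), this yields $w_p\to\eta_0$ in $C^4_{loc}$. Here $\eta_0$ is the radial solution of \eqref{lst1} with $\eta_0(0)=0$, constructed explicitly by ODE methods. Its asymptotics $\eta_0(r)=C_0\ln r+C_1+O(r^{-1})$, and in particular the constants $C_0,C_1$, are computed from the radial ODE and from integrals of $Z^2e^Z$ and $Ze^Z$ against $\int e^Z=64\pi^2$. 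These constants produce the numbers $2\ln(8\sqrt6)$ and $\tfrac83$.

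The $O(p^{-2})$ remainder in \eqref{lst} is obtained by repeating the argument at the next order. I expect this to need some care; it is acceptable to use $O(p^{-2+\delta})$ intermediately.

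\textbf{Step 2: matching with the Green representation.}
For $x$ with $|x-x_{p,j}|=\e_{p,j}|y|$ in an intermediate range, I write
\[
u_p(x)=\int_\Omega G(x,z)\,u_p^p(z)\,dz.
\]
I split the integral over the balls $B_{d}(x_{p,m})$. Inside each ball I substitute the expansion from Step 1, which gives
\[
p\,u_p(x)=\sum_m \frac{u_p(x_{p,m})}{\ldots}\cdot\Big(64\pi^2+\frac{c}{p}\Big)G(x,x_{p,m})+\ldots,
\]
using $\int (1+\eta/p)^p=64\pi^2+\frac1p\int(\ldots)+\ldots$.

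Near $x_{p,j}$ I then compare two expressions:
\begin{itemize}
\item from \eqref{defwpj}, $u_p(x)=u_p(x_{p,j})\big(1+\eta_{p,j}(y)/p\big)$, with $\eta_{p,j}(y)=-4\ln|y|+\ldots$;
\item from the Green representation, $u_p(x)$ contains $-8\ln|x-x_{p,j}|\cdot\frac{1}{8\pi^2}$ times the mass, plus the regular parts $H(x_{p,j},x_{p,j})$ and $G(x_{p,j},x_{p,m})$.
\end{itemize}
Since $\ln|x-x_{p,j}|=\ln\e_{p,j}+\ln|y|$, the $\ln|y|$ terms cancel. Equating constants gives
\[
\frac{u_p(x_{p,j})}{p}\Big(C_1'+\ldots\Big)=\frac{64\pi^2u_p(x_{p,j})}{p}\Big(-\frac{\ln\e_{p,j}}{8\pi^2}+\Psi_{k,j}\Big)+\ldots.
\]
Inserting $\ln\e_{p,j}=-\tfrac14\big(\ln p+(p-1)\ln u_p(x_{p,j})\big)$ turns this into an equation for $\ln u_p(x_{p,j})$. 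Solving it to order $1/p$ gives \eqref{5-7-52}. Formulas \eqref{nn3-29-03} and \eqref{3-29-03} then follow by substituting \eqref{5-7-52} into the definition of $\e_{p,j}$. One also needs $x_{p,m}-x_{\infty,m}=O(p^{-1})$; this comes from local Pohozaev identities on $B_d(x_{p,m})$ combined with the nondegenerate leading term $\nabla\Psi$.

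\textbf{Main obstacle.}
The hard part is controlling all error terms at order $p^{-2+\delta}$. This requires:
\begin{itemize}
\item a quantitative estimate of the mass $p\int_{B_d(x_{p,m})}u_p^p$ up to $O(p^{-1})$;
\item a quantitative estimate of the tail, where $\eta/p$ is not small.
\end{itemize}
For the tail I would use the pointwise bound $u_p^p\le C\e_{p,j}^{-4}p^{-1}(1+|y|)^{-4+\delta}$ from the decay estimate of Step 1. This bound is where the $\delta$-loss arises.
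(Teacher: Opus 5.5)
\textbf{Main gap: the decay rate.} The decay input in your Step~1 is wrong, and the rest of the plan depends on it.

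You are using $Z\sim-4\ln|y|$, taken from the misprint $|x|$ for $|x|^2$ in \eqref{Z(x)}. The actual profile is $Z(y)=-4\ln\bigl(1+\frac{|y|^2}{8\sqrt6}\bigr)=-8\ln|y|+4\ln(8\sqrt6)+O(|y|^{-2})$. This is forced by $\int_{\mathbb R^4}e^Z=64\pi^2<\infty$. It is also forced by your own matching: the Green side contributes $-\frac{1}{8\pi^2}\cdot 64\pi^2\ln|y|=-8\ln|y|$, so with $-4\ln|y|$ on the inner side the logarithms would not cancel.

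Consequently:
\begin{enumerate}
\item[(a)] The bound $Z_{p,j}\ge-(4+o(1))\ln(1+|y|)$ is false; its $C^4_{loc}$ limit $Z$ already violates it. Moreover, a lower bound could never give integrable control of $(1+Z_{p,j}/p)^p\le e^{Z_{p,j}}$; that needs an upper bound.
\item[(b)] Your tail bound $u_p^p\le C\varepsilon_{p,j}^{-4}p^{-1}(1+|y|)^{-4+\delta}$ is unusable. The weight $(1+|y|)^{-4+\delta}$ is not integrable on $\mathbb R^4$. Over $B_{d/\varepsilon_{p,j}}(0)$ it bounds the mass only by $p^{-1}(d/\varepsilon_{p,j})^{\delta}$, which grows like $e^{\delta p/8}$. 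So neither the mass expansion to $O(p^{-2+\delta})$ nor any remainder estimate closes.
\end{enumerate}
What is needed is the upper bound of Lemma~\ref{llma} (from Santra--Wei): $Z_{p,j}(y)\le-(8-\sigma)\ln|y|+C_\sigma$ for $R_\sigma\le|y|\le d/\varepsilon_{p,j}$, i.e.\ $F_{p,j}\le C(1+|y|)^{-8+\sigma}$. The paper uses exactly this for every tail in Propositions~\ref{key-1}, \ref{key-2} and \ref{Prop:5-12-52}.

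\textbf{Three further points.}

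First, the a priori control of $w_p=p(Z_{p,j}-Z)$ is the real content of Step~1, and you only assert ``logarithmic growth''. The paper proves $|w_p|\le C(1+|y|)^{\tau}$ on $B_{d_0/\varepsilon_{p,j}}(0)$ by a weighted blow-up:
\begin{enumerate}
\item[(i)] normalize by $\max|w_p|/(1+|y|)^\tau$;
\item[(ii)] use the Green representation with kernel $-\frac{1}{8\pi^2}\ln\frac{|y-z|}{|z|}$, together with the decay above, to keep the maximum point bounded;
\item[(iii)] kill the limit using Lemma~\ref{llm} and $\psi(0)=0$, $\nabla\psi(0)=0$.
\end{enumerate}
It then repeats this for $p(w_p-\eta_0)$ to obtain the $O(p^{-2})$ in \eqref{lst}.

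Second, your claim $x_{p,m}-x_{\infty,m}=O(p^{-1})$ does not follow under the hypotheses of Theorem~\ref{th1}. Proposition~\ref{lem3-8} only yields $\nabla\Psi_k(x_{p,1},\dots,x_{p,k})=O(1/p)$. Turning this into a distance bound requires something like nondegeneracy of $x_\infty$, which is assumed only in Theorem~\ref{th1.1}. The paper makes the same replacement of $x_{p,\cdot}$ by $x_{\infty,\cdot}$ within $O(1/p)$ without comment, in \eqref{s6-4-11} and \eqref{sab5-8-4}. Without such an input, what either argument proves is the expansion with $\Psi_{k,j}(x_{p,1},\dots,x_{p,k})$.

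Third, your attribution of the constants is off.
\begin{enumerate}
\item[(a)] $2\ln(8\sqrt6)$ comes from $Z$ alone, via $\frac{1}{8\pi^2}\int\ln|z|e^Z=4\ln(8\sqrt6)$. The paper reads it off by evaluating $u_p(x_{p,j})=\int_\Omega G(y,x_{p,j})u_p^p$ directly rather than by matching at large $|y|$. The two routes are equivalent, but the direct one avoids the intermediate-region bookkeeping.
\item[(b)] $\frac83=\frac{13}{6}+\frac12$ comes from the mass defect $A=\int e^Z(\eta_0-\frac12Z^2)=-\frac{832}{3}\pi^2$, i.e.\ $\int F_{p,j}=64\pi^2(1-\frac{13}{3p})+O(p^{-2+\delta})$, together with the factor $\frac{p}{2(p-1)}$.
\item[(c)] The constant term of $\eta_0$ at infinity does not enter at this order.
\end{enumerate}
Your idea of computing $A$ through $\int Ze^Z$ does work. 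Pairing the equation for $\eta_0$ with the kernel element $4+y\cdot\nabla Z$ and integrating by parts gives $A=\int Ze^Z-\int e^Z=-\frac{640}{3}\pi^2-64\pi^2$, in agreement with Appendix~\ref{76}.
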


\begin{Rem}
    Compared to  the process of improving asymptotic analysis for solutions of Lane-Emden problem \eqref{lane} by \cite{Grossi-Ianni-Luo-Yan}, much more complex ODE's problem will occur, and we need to introduce new ODEs to accomplish the improvements. Moreover, due to the different structure of the biharmonic operator, the argument used in the proofs of Proposition \ref{key-1} and Proposition \ref{key-2} in \cite{Grossi-Ianni-Luo-Yan} can no longer be applied. We instead make use of the Green representation  to build more complex and refined integral estimates directly, and this method to overcome difficulty brought by biharmonic operator is mainly derved from \cite{santra-wei}.
\end{Rem}

The main result of this paper is about the non-degeneracy for  the \emph{multi-spike}  solutions of \eqref{1.1}, when   $p$ is large enough. 
\begin{Thm}\label{th1.1}
Let   $u_p$ be a solution of \eqref{1.1} satisfying \eqref{1.2}. Let
$k$ be the number in Theorem A  and $\xi_p\in H^2_0(\Omega)$ be a solution of $\mathcal{L}_p\big(\xi_p\big)=0$, where
$$\mathcal{L}_p\big(\xi\big):= {\Delta}^2 \xi -p{(u_p^+)}^{p-1}\xi$$
is the linearized operator of the  semilinear fourth-order elliptic problem \eqref{1.1} at the solution $u_{p}$.
Suppose that $x_{\infty}:=(x_{\infty,1},\cdots,x_{\infty,k})$
is a nondegenerate critical point of the Kirchoff-Routh function $\Psi_{k}$  defined in \eqref{stts}, then there exists $p^{\ast}>1$ such that
	\[\xi_p \equiv 0, \qquad\mbox{ for } p\geq p^{\ast}.\]
\end{Thm}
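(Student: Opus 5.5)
We argue by contradiction, following the scheme of Grossi--Ianni--Luo--Yan for the Lane--Emden problem. Suppose there are $p_n\to\infty$ and nontrivial solutions $\xi_n:=\xi_{p_n}\in H^2_0(\Omega)$ of $\mathcal L_{p_n}(\xi_n)=0$, normalized by $\|\xi_n\|_{L^\infty(\Omega)}=1$. For each $j$ we set
\[
\tilde\xi_{n,j}(y):=\xi_n(x_{p_n,j}+\e_{p_n,j}y),\qquad y\in\Omega_{p_n,j}.
\]
Since $p\,(u_p^+)^{p-1}(x_{p,j}+\e_{p,j}y)\,\e_{p,j}^4=\big(1+Z_{p,j}/p\big)^{p-1}\to e^{Z}$ by \eqref{5-8-2}, elliptic estimates give $\tilde\xi_{n,j}\to\tilde\xi_j$ in $C^4_{loc}(\R^4)$. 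The limit solves $\Delta^2\tilde\xi_j=e^Z\tilde\xi_j$ and is bounded. By the known nondegeneracy of the standard bubble of $\Delta^2 w=e^w$ in $\R^4$,
\[
\tilde\xi_j=\sum_{i=1}^4 a_{j,i}\,\frac{\pa_i Z}{\ldots}+b_j\,(y\cdot\n Z+4).
\]
More precisely, $\tilde\xi_j=\sum_i a_{j,i}\psi_i+b_j\psi_0$, where $\psi_i=\frac{y_i}{8\sqrt6+|y|}$ after normalization, and $\psi_0=y\cdot\n Z+4$. Away from the concentration set, Green's representation $\xi_n(x)=\int_\Omega G(x,z)p(u_p^+)^{p-1}\xi_n\,dz$ together with the decay of $u_p$ outside $\bigcup_j B_{\e_{p,j}R}(x_{p,j})$ yields
\[
\xi_n(x)=\sum_j A_{n,j}\,G(x,x_{p,j})+\sum_{j,i}B_{n,j,i}\,\e_{p,j}\,\pa_{z_i}G(x,x_{p,j})+o(\cdot),
\]
with $A_{n,j}\approx \int e^Z\tilde\xi_{n,j}$ and $B_{n,j,i}\approx\int e^Z y_i\tilde\xi_{n,j}$.

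\emph{Step 1: $b_j=0$.} Since $\int e^Z\psi_0=0$ and $\int e^Z\psi_i=0$, the coefficient $A_{n,j}$ is $o(1)$. We then apply the Pohozaev-type identity obtained by testing the equation for $u_p$ against $\xi_n$, and the linearized equation against $(x-x_{p,j})\cdot\n u_p+\frac{4}{p-1}u_p$, on $B_r(x_{p,j})$. The boundary integrals are expressed via Green's function expansions of $pu_p$ and $\xi_n$. We need the refined expansions of Theorem \ref{th1}, namely \eqref{5-7-52} and \eqref{lst}, to identify $A_{n,j}$ at order $1/p$. This should give $A_{n,j}=\frac{c}{p}\,b_j(1+o(1))$ with $c\neq0$, coming from the $\ln p$ and $\eta_0$ corrections. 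Combined with the matching $\xi_n\approx \sum_l A_{n,l}G(\cdot,x_{p,l})$ near $x_{p,j}$ and the logarithmic size of $G$ at scale $\e_{p,j}$ (where $\ln\e_{p,j}\sim -p/8$ by \eqref{nn3-29-03}), the relation $b_j=\lim\tilde\xi_{n,j}(\infty)$ forces $b_j=-\frac{p}{8\cdot 8\pi^2}A_{n,j}(1+o(1))$. This identity is compatible with the previous one only if $b_j=0$.

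\emph{Step 2: $a_{j,i}=0$.} We use the translational local Pohozaev identity, testing against $\pa_i u_p$ and $\pa_i\xi_n$ on $B_r(x_{p,j})$. Inserting the expansions $pu_p\approx 64\pi^2\sqrt e\sum_l G(\cdot,x_{p,l})$ and the expansion of $\xi_n$ above, the boundary terms reduce (for $r$ small) to
\[
\sum_{l,m}\pa^2_{a_ja_l}\Psi_k(x_\infty)\,\e_{p,l}B_{n,l,m}=o\Big(\max_l \e_{p,l}|B_{n,l}|\Big).
\]
Here \eqref{3-29-03} is used to compare the $\e_{p,l}$, which are all of the same order. Nondegeneracy of the Hessian of $\Psi_k$ then gives $B_{n,l}\to0$, and hence $a_{l,i}=0$.

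\emph{Step 3: contradiction.} Now $\tilde\xi_{n,j}\to0$ locally for every $j$. Green's representation shows $\xi_n\to0$ uniformly away from the spikes. In the intermediate annuli $\e_{p,j}R\le|x-x_{p,j}|\le r$ we use the bound $p(u_p^+)^{p-1}\le C\e_{p,j}^{-4}e^{Z}$ coming from decay estimates of $Z_{p,j}$; this gives $\|\xi_n\|_\infty\to0$, contradicting the normalization. We expect the main obstacle to be Step 1: the dilation coefficient only appears at order $1/p$, so the cancellation requires Theorem \ref{th1} with error $O(p^{-2+\delta})$ and careful biharmonic boundary integrals on $\pa B_r$, which involve third derivatives of $G$.
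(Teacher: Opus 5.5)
Your Step 1 does not work: the two relations you play against each other are the same relation, so they cannot force the dilation coefficient to vanish.

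Split $A_{n,j}=p\int_{B_d(x_{p,j})}(u_p^+)^{p-1}\xi_n$ as $\frac{p}{u_p(x_{p,j})}\int_{B_d(x_{p,j})}(u_p(x_{p,j})-u_p)(u_p^+)^{p-1}\xi_n+\frac{p}{u_p(x_{p,j})}\int_{B_d(x_{p,j})}(u_p^+)^{p}\xi_n$.
\begin{itemize}
\item The second piece is $\frac{1}{p-1}$ times boundary terms, obtained by testing the $u_p$-equation against $\xi_n$. It is therefore $O(\frac1p\max_m|A_{n,m}|)+O(\varepsilon_p/p)$.
\item After scaling, the first piece is $-\frac1p\int Z_{p,j}(1+Z_{p,j}/p)_+^{p-1}\tilde\xi_{n,j}\,dz=-\frac{256\pi^2}{p}b_j+o(\frac1p)$. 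This uses $\int_{\mathbb{R}^4}Ze^Z\psi_0=256\pi^2$ for your $\psi_0=4+y\cdot\nabla Z$.
\item Your matching argument with $\ln\varepsilon_{p,j}=-\frac p8+O(1)$, done with the correct limit $\psi_0(\infty)=-4$ (not $b_j=\lim\tilde\xi_{n,j}(\infty)$), gives $-4b_j=\frac{p}{64\pi^2}A_{n,j}$. That is again $A_{n,j}=-\frac{256\pi^2}{p}b_j$.
\end{itemize}
So the two statements agree for every value of $b_j$. Your constant $c$ is not an arbitrary nonzero number; it is exactly the value that makes them consistent. It also comes from the leading-order profile, not from the $\ln p$ or $\eta_0$ corrections of Theorem~\ref{th1}, which play no role here. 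Finally, the dilation Pohozaev identity cannot produce any relation involving $b_j$: its boundary integrals on $\partial B_r(x_{p,j})$ see only the outer coefficients $A_{n,m}$ and $\varepsilon_{p,m}B_{n,m,i}$.

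What that identity actually gives, and what you are missing, is that $A_{n,j}$ is exponentially small.
\begin{itemize}
\item Evaluate $P_j(\xi_n,u_p)$ using $P_j(G(x_{p,j},\cdot),G(x_{p,j},\cdot))=\frac{1}{8\pi^2}$.
\item Kill the $\varepsilon_{p,j}b_{i,m}$ cross terms with the balance relation \eqref{daluo-gil} of Proposition~\ref{lem3-8}.
\item Bound the right side, $\frac{4}{p-1}$ times boundary terms, by $O(p^{-2}\max_m|A_{n,m}|)+o(\varepsilon_p/p)$.
\end{itemize}
This yields $\frac{A_{n,j}C_{p,j}}{8\pi^2}=O(p^{-2}\max_m|A_{n,m}|)+o(\varepsilon_p/p)$ with $C_{p,j}\sim 64\pi^2\sqrt e/p$, hence $A_{n,j}=o(\varepsilon_p)$. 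Only against this bound does $A_{n,j}=-\frac{256\pi^2}{p}b_j+o(\frac1p)$ give $b_j=0$; this is Proposition~\ref{dprop-luo1} in the paper.

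The same bound is needed later, and you never obtain it.
\begin{itemize}
\item In Step 2, the terms $A_{n,s}C_{p,m}Q_j(G(x_{p,m},\cdot),G(x_{p,s},\cdot))$ are $O(|A|/p)$ and must be $o(\varepsilon_p/p)$. Knowing only $b_j=0$ gives $A_{n,j}=o(1/p)$, which is not enough.
\item In Step 3, $G$ has size $|\ln\varepsilon_{p,j}|\sim p/8$ near the spikes. So neither $A_{n,j}G$ nor the Green integral over $B_{R\varepsilon_{p,j}}(x_{p,j})$ becomes small from $\tilde\xi_{n,j}\to0$ locally plus the decay of $p(u_p^+)^{p-1}$.
\end{itemize}
The paper needs $A_{n,j}=o(\varepsilon_p)$ and a split into the regions $|x-x_{p,j}|\ge d$, $2p\varepsilon_{p,j}\le|x-x_{p,j}|\le d$ and $R\varepsilon_{p,j}\le|x-x_{p,j}|\le 2p\varepsilon_{p,j}$. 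In the last region it rescales at $|x_p-x_{p,j}|$ and runs a radial ODE argument.
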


   As \cite{GS,Boggio} showed, the Green's function of biharmonic operator with Dirichlet boundary conditions in a unit ball is given exactly by
\begin{equation*}
G(x,y
) = \frac{1}{8\pi^2} \int_1^{[x,y]/|x-y|} \frac{v^2 - 1}{v^3} \, dv > 0
,
\end{equation*}
where $[x,y] = \sqrt{|x-y|^2 + (1-|x|^2)(1-|y|^2)}$. When $\Omega$ is a ball, \cite{BGW} proved that positive solutions of \eqref{1-1} are radially symmetric by developing a new variant of the method of moving planes. What's more,  according to the results given by Theorem $A$, the solution of \eqref{1.1} will be possitive and concentrate at the critical points of the Kirchhoff-Routh function as $p \rightarrow \infty$. It is easy to obtain that the Kirchoff-Routh function reduces to the Robin function in this special case, and the related critical point is the origin. On the other hand, we  note that Theorem \ref{th1.1}  require the critical points of the Kirchhoff-Routh function $\Psi_{k}$ to be non-degenerate, and it is a common assumption. However, the existence of critical points to $\Psi_{k}$ for biharmonic operator in general domains and  their  non-degeneracy remains to be further investigated.

\vskip 0.2cm

 The proofs of Theorem  \ref{th1.1}  is obtained  by combining  various  local Pohozaev identities, blow-up analysis and the properties of Green's function,  inspired by \cite{Deng,Grossi2,GMPY20}. In dimension four, for the biharmonic operator, the corresponding Green’s function and the Pohozaev identities are all more complicated, and hence more refined estimates are required.

 We also point that our results hold for biharmonic problem with Navier boundery condition in dimension four studied by  \cite{AMG}. 
And we can consider  the corresponding problem in higher dimensions, i.e. 
\begin{equation} \label{gaowei}
\begin{cases}
\Delta^2 u = u^p, \, u > 0 & \text{in } \Omega, \\
\Delta u = u = 0 & \text{on } \partial\Omega,
\end{cases}
\end{equation}
where  $\Omega$ a smooth bounded domain  in $\mathbb{R}^n$ with $n \geq 5$.
Refering to Geng \cite{Geng} and Chou-Geng \cite{Chou-Geng}, They showed that when $p \to (n + 4)/(n - 4)$, the critical Sobolev exponent for the embedding $H^2(\Omega) \cap H_0^1(\Omega) \hookrightarrow L^q(\Omega)$, the solution obtained by the variational method will blow up at some point $x_0$ which is a critical point of the Robin function $R$, where $R(x) = H(x,x)$, and $H$ is the regular part of the Green's function of the biharmonic operator $\Delta^2$ with the Navier boundary condition. And we can consider the non-degeneracy and local uniqueness of the solution to \eqref{gaowei} in higher dimensions.

\vskip 0.2cm

The paper is organized as follows. Section \ref{s} fristly gives a lemma about the kernel of a corresponding linearized operator, and introduces a series of estimates involving Green’s function. We also obtain some local Pohozaev identities for the biharmonic operator. In Section \ref{section3} we further study the asymptotic behavior of multi-spike solutions to \eqref{1.1} and prove Theorem~\ref{th1}, as well as some more explicit estimates which will be used in the following proof. Section \ref{section4}  will focus on the non-degeneracy of solutions to \eqref{1.1}, and we accomplish the proof of Theorem \ref{th1.1}. Finally, some lengthy but technical computations  are posted to Appendix \ref{s6} and Appendix \ref{76}.

\vskip 0.2cm


\section{Some known facts and key computations} \label{s}
 We  put some results which will be useful throughout  the paper in this section.

 \subsection{Linearization of the Liouville equation}$\,$  \vskip 0.2cm

Next lemma is  a well known characterization of the kernel of the linearized operator of the
Liouville equation  at the solution $Z$ (see for instance  \cite{Baraket}).
\begin{Lem}\label{llm}
Let $Z$ be the function defined in \eqref{Z(x)} and $v\in C^4(\R^4)$ be a solution of the following problem
\begin{equation}\label{3-29-01}
\begin{cases}
\Delta^2 v=e^Zv\,\,~\mbox{in}~\R^4,\\[1mm]
\displaystyle\int_{\R^4}|\Delta v|^2dx<\infty.
\end{cases}
\end{equation}
Then it holds
\begin{equation*}
v(x)\in \mbox{span}\left\{\frac{\partial Z(x)}{\partial x_1},
\frac{\partial Z(x)}{\partial x_2},\frac{\partial Z(x)}{\partial x_3},\frac{\partial Z(x)}{\partial x_4},\frac{\partial Z_\lambda(\frac{x}{\lambda})}{\partial \lambda}\Big|_{\lambda=1}\right\},
\end{equation*}
with $\frac{\partial Z_\lambda(\frac{x}{\lambda})}{\partial \lambda}\Big|_{\lambda=1} =4+x\cdot\nabla Z$.
\end{Lem}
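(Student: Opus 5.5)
The plan is the classical route for nondegeneracy of Liouville-type bubbles, adapted to the fourth-order operator. It has two steps: a separation of variables into spherical harmonics, followed by an ODE analysis of each radial coefficient using the finite-energy condition. First compute $e^{Z}=(1+|x|/(8\sqrt6))^{-4}$. Since $Z$ has logarithmic growth, a solution $v$ of \eqref{3-29-01} with $\int|\Delta v|^2<\infty$ satisfies the growth bound $|v(x)|\le C\ln(2+|x|)$, and in fact $v$ is bounded. To see this, write
\[
v(x)=\frac{1}{8\pi^2}\int_{\R^4}\ln\frac{|y|}{|x-y|}\,e^{Z}v\,dy+P(x),
\]
where $P$ is biharmonic with $\Delta P\in L^2$, so $P$ is affine. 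Since $e^Z\sim 8^4\cdot 36|x|^{-4}$ decays only like $|x|^{-4}$, a bootstrap is needed to make this representation legitimate: start from the $L^2$ bound on $\Delta v$ and use local elliptic estimates on dyadic annuli to get a polynomial growth bound on $v$ first.

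Next expand $v(x)=\sum_{k\ge0}\sum_{i}\phi_{k,i}(r)Y_{k,i}(\theta)$, where $Y_{k,i}$ are spherical harmonics on $S^3$ with eigenvalues $\lambda_k=k(k+2)$. Each coefficient solves the radial ODE
\[
\Delta_k^2\phi=e^{Z(r)}\phi,\qquad \Delta_k:=\partial_r^2+\frac3r\partial_r-\frac{\lambda_k}{r^2},
\]
which is a fourth-order ODE with a four-dimensional solution space. At $r=0$ the regular solutions behave like $r^k$ and $r^{k+2}$, giving two dimensions. At infinity, since $e^Z\sim c\,r^{-4}$, the equation is asymptotically an Euler equation. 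Its indicial roots perturb those of $\Delta_k^2$, namely $k,\,k+2,\,-k-2,\,-k$ for $k\ge1$ and $0,2,-2$ (with a logarithm) for $k=0$, and the perturbation is determined by $(\gamma(\gamma+2)-\lambda_k)((\gamma-2)\gamma-\lambda_k)=c'$. Finite energy excludes growth $r^{k+2}$ and, for large $k$, also $r^k$.

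For $k\ge2$ I would show that no nontrivial regular solution decays. Rather than a direct maximum principle, which fails for fourth order, the idea is to factor the operator as a system $-\Delta_k\phi=\psi$, $-\Delta_k\psi=e^Z\phi$, and compare with the explicit kernel element obtained for $k=1$. That element is $\partial_{x_i}Z=Z'(r)x_i/r$, with radial part $Z'(r)=-4/(8\sqrt6+r)$. A Wronskian/Sturm argument shows that the "first eigenvalue" of the angular problem for which a bounded solution exists is exactly $\lambda_1=3$. The plan is to test the equation against $\phi_1=Z'$ and integrate by parts: $\int(\lambda_k-\lambda_1)(\dots)\,r^3dr=0$, with the boundary terms vanishing thanks to the decay. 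Positivity of the resulting integrand, which comes from the positivity of the inverse of $-\Delta_k$ (its Green function is positive), then forces $\phi_k=0$. For $k=1$ the same argument shows $\phi_1$ is a multiple of $Z'$. For $k=0$, one notes that $4+rZ'(r)$ solves the radial equation and is bounded. The other regular radial solution must then grow, either like $r^2$ or like $\ln r$ with $\Delta\phi\not\to0$ in $L^2$; this follows from a reduction of order/Wronskian computation, so finite energy excludes it.

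The main obstacle is the $k\ge2$ step. Fourth-order ODEs lack the Sturm comparison available for second order, so the positivity needed in the integral identity must be extracted carefully from the system formulation and from the positivity of the Green function of $-\Delta_k$ on $(0,\infty)$. An alternative I would keep in reserve is stereographic projection. $Z$ corresponds to the round metric on $S^4$, so \eqref{3-29-01} becomes $P_{S^4}w=24w$ for the Paneitz operator $P=\Delta^2-2\Delta$. The spectrum $\ell(\ell+1)(\ell+2)(\ell+3)$ equals $24$ only for $\ell=1$, whose eigenspace is $5$-dimensional. This would give the result directly, provided the finite-energy solution is shown to extend to an $H^2(S^4)$ function across the pole, which again follows from the boundedness established in the first step.
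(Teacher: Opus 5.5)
\textbf{Your step ``finite energy $\Rightarrow$ $v$ bounded'' is false, and it cannot be repaired: under the hypothesis $\int_{\mathbb{R}^4}|\Delta v|^2<\infty$ the statement itself fails.} In your Green representation, $P$ biharmonic with $\Delta P\in L^2(\mathbb{R}^4)$ only gives $\Delta P\equiv 0$, since an $L^2$ harmonic function vanishes. So $P$ is harmonic, not affine, and harmonic polynomials of every degree have zero energy.

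Genuine solutions realise this. With the correct bubble, $e^{Z}\sim(8\sqrt6)^4|x|^{-8}$. In the mode $v=\phi(r)Y_k(\theta)$, take the solution with leading term $r^k$ at infinity. Since $\Delta_k r^k=0$ and $e^Z\phi=O(r^{k-8})$, it satisfies $\Delta_k\phi=O(r^{k-6}\ln r)$. For $k=2,3$ this is square integrable against $r^3\,dr$ at infinity; for $k\ge4$ it is not, as you observed. Hence for $k=2,3$:
\begin{itemize}
\item the solutions with finite energy at infinity form a $3$-dimensional subspace (behaviours $r^k$, $r^{-k}$, $r^{-k-2}$) of the $4$-dimensional solution space;
\item the solutions regular at $0$ form a $2$-dimensional subspace;
\item since $2+3>4$, these intersect nontrivially.
\end{itemize}
This gives a smooth nonzero $v=\phi(r)Y_k(\theta)$ with $\int|\Delta v|^2<\infty$. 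It lies in a mode $k\ge 2$, so it is not in the span of the five listed functions (and it necessarily grows like $|x|^k$). Your remark that finite energy excludes $r^k$ only for large $k$ is exactly where the argument breaks. The small modes $k=2,3$ were supposed to be handled by boundedness, which is not available.

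You also took \eqref{Z(x)} literally, but $-4\ln(1+|x|/(8\sqrt6))$ is a misprint: it is not smooth at $0$, does not solve $\Delta^2Z=e^Z$, and $e^Z\notin L^1$. The bubble used everywhere else in the paper is $Z=-4\ln\bigl(1+|x|^2/(8\sqrt6)\bigr)$. So your decay $|x|^{-4}$, perturbed indicial equation and profile $-4/(8\sqrt6+r)$ should become decay $|x|^{-8}$, unperturbed indicial roots at infinity, and the profiles $-8r/(8\sqrt6+r^2)$ and $4(8\sqrt6-r^2)/(8\sqrt6+r^2)$.

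The paper gives no proof of this lemma; it only cites the literature, where the result is stated for bounded solutions. Every place the lemma is actually used (Propositions \ref{key-1}, \ref{key-2}, \ref{daprop3-2} and Appendix \ref{76}) involves $v$ with $|v|\le C(1+|x|)^\tau$, $\tau<1$. The energy condition is never verified there. The correct statement therefore replaces $\int|\Delta v|^2<\infty$ by such a growth bound.

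Under that bound, your fallback is a complete proof and is the one to use:
\begin{itemize}
\item Set $w=v\circ\pi^{-1}$ on $S^4$. Then $w=O(\rho^{-\tau})\in L^1$ near the pole $N$, so $(P_{S^4}-24)w=\sum c_\alpha\partial^\alpha\delta_N$.
\item Derivatives of the fundamental solution ($\sim c\ln\rho$) have singularities $\rho^{-|\alpha|}$; comparing with $w=O(\rho^{-\tau})$ forces $c_\alpha=0$ for $|\alpha|\ge1$.
\item Testing against the eigenfunction $x_5|_{S^4}$, which equals $1$ at $N$, forces $c_0=0$.
\item Hence $w$ is an eigenfunction with eigenvalue $24$, i.e.\ $\ell=1$. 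This $5$-dimensional space pulls back exactly to the five listed functions.
\end{itemize}
Your main ODE route for $k\ge2$, by contrast, is not yet a proof. In the identity obtained by testing against the $k=1$ profile, the integrand has no sign unless you already control the signs of $\phi$ and $\Delta_k\phi$. That control is precisely what fails for fourth-order operators.
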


\vskip 0.4cm

\subsection{Pohozaev-type identities} $\,$  \vskip 0.2cm

We prove some  local Pohozaev identities.
\begin{Lem}
Let $u\in C^4(\Omega)$ be a solution of \eqref{1.1},  $x_{p,j}$, $j=1,\ldots, k$ be the points defined in \eqref{def:xpj} and let $\theta>0$ be such that $B_{2\theta}(x_{p,j})\subset\Omega$. Then
\begin{equation}\label{aclp-1}
Q_{j}(u,u)= \frac{2}{p+1}\int_{\partial B_{\theta}(x_{p,j})}  (u^+)^{p+1} \nu_id\sigma
\end{equation}
and
\begin{equation}\label{aclp-10}
P_j(u,u)
=
\frac{8}{p+1}
\int_{B_\theta(x_{p,j})}
(u^+)^{p+1}\,dx
-
\frac{2\theta}{p+1}
\int_{\partial B_\theta(x_{p,j})}
(u^+)^{p+1}\,d\sigma,
\end{equation}
where $P_{j}$ and $Q_{j}$ are the quadratic forms defined in \eqref{07-08-20} and \eqref{abd} and $\nu=\big(\nu_{1},\nu_2,\nu_{3},\nu_4\big)$ is the outward unit normal of $\partial B_{\theta}(x_{p,j})$.
\end{Lem}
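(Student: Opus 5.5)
We derive both identities from the equation $\Delta^2 u=(u^+)^p$ on $B_\theta(x_{p,j})$ by testing it against two vector fields and integrating by parts. For \eqref{aclp-1} the test function is $\partial_i u$ (translation), and for \eqref{aclp-10} it is $(x-x_{p,j})\cdot\nabla u$ (dilation). The quadratic forms $Q_j(u,v)$ and $P_j(u,v)$ of \eqref{abd} and \eqref{07-08-20} are, by their definition, boundary integrals over $\partial B_\theta(x_{p,j})$. They are built from $u,\nabla u,\Delta u,\nabla\Delta u$, normal derivatives and $\nu$, and they collect exactly the boundary terms that appear when $\int_{B_\theta}\Delta^2u\,\partial_iu$ and $\int_{B_\theta}\Delta^2 u\,\langle x-x_{p,j},\nabla u\rangle$ are rewritten. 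We write the forms in the symmetric bilinear version, so that the same identities can later be applied to differences of solutions or to $u_p$ and $\xi_p$. Since $u\in C^4$, every integration by parts is justified on the smooth ball.

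\emph{Translation identity.} On the right-hand side, $(u^+)^p\partial_iu=\frac1{p+1}\partial_i\big((u^+)^{p+1}\big)$; this holds because $(u^+)^{p+1}$ is $C^1$ with derivative $(p+1)(u^+)^p\partial_i u$. The divergence theorem then turns $\int_{B_\theta}(u^+)^p\partial_iu$ into $\frac1{p+1}\int_{\partial B_\theta}(u^+)^{p+1}\nu_i$. On the left-hand side we integrate by parts twice:
\[
\int_{B_\theta}\Delta^2u\,\partial_iu=\int_{\partial B_\theta}\Big(\frac{\partial\Delta u}{\partial\nu}\partial_iu-\Delta u\,\frac{\partial(\partial_iu)}{\partial\nu}\Big)+\int_{B_\theta}\Delta u\,\partial_i\Delta u .
\]
The last integral equals $\frac12\int_{\partial B_\theta}(\Delta u)^2\nu_i$. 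Multiplying by $2$, which matches the normalization $\frac2{p+1}$, produces $Q_j(u,u)$.

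\emph{Dilation identity.} Set $y=x-x_{p,j}$. On the right-hand side,
\[
(u^+)^p\,y\cdot\nabla u=\frac1{p+1}\Big(\operatorname{div}\big(y\,(u^+)^{p+1}\big)-4(u^+)^{p+1}\Big).
\]
Since $y\cdot\nu=\theta$ on $\partial B_\theta$, this gives $\frac1{p+1}\big(\theta\int_{\partial B_\theta}(u^+)^{p+1}-4\int_{B_\theta}(u^+)^{p+1}\big)$. On the left-hand side we use the commutator $\Delta(y\cdot\nabla u)=2\Delta u+y\cdot\nabla\Delta u$. Two integrations by parts give
\[
\int_{B_\theta}\Delta^2u\,(y\cdot\nabla u)=\text{(boundary terms)}+\int_{B_\theta}\Delta u\,\big(2\Delta u+y\cdot\nabla\Delta u\big).
\]
Furthermore,
\[
\int_{B_\theta} \Delta u\,y\cdot\nabla\Delta u=\frac\theta2\int_{\partial B_\theta}(\Delta u)^2-2\int_{B_\theta}(\Delta u)^2 .
\]
In dimension four the interior $(\Delta u)^2$ terms therefore cancel exactly ($2-2=0$). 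This is the conformal invariance of $\Delta^2$ in $\mathbb R^4$, and it is what leaves only boundary terms on the left. Multiplying through by $-2$ and moving everything to match the sign convention of $P_j$ gives \eqref{aclp-10}, with the coefficients $\frac{8}{p+1}$ and $-\frac{2\theta}{p+1}$.

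The main obstacle is bookkeeping rather than any conceptual difficulty. We must check that the collected boundary terms coincide exactly, including signs and the factor $2$, with the forms $P_j$ and $Q_j$ as defined in \eqref{07-08-20} and \eqref{abd}, and that the interior terms cancel in dimension four. For the dilation identity we would first verify the commutator formula and the divergence computation $\operatorname{div}y=4$, and then match the terms $\frac{\partial\Delta u}{\partial\nu}\,(y\cdot\nabla u)$, $\Delta u\,\frac{\partial(y\cdot\nabla u)}{\partial\nu}$ and $\frac\theta2(\Delta u)^2$ term by term against the definition.
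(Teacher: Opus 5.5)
Your proposal is correct and follows essentially the same route as the paper. You test $\Delta^2u=(u^+)^p$ against $\partial_iu$ and against $\langle x-x_{p,j},\nabla u\rangle$, integrate by parts twice, use the dimension-four cancellation $2-2=0$ of the interior $(\Delta u)^2$ terms to get $Q_j(u,u)=2\int_{B_\theta}\Delta^2u\,\partial_iu$ and $P_j(u,u)=-2\int_{B_\theta}\Delta^2u\,\langle x-x_{p,j},\nabla u\rangle$, and then convert the nonlinear side with the divergence theorem, exactly as the paper does.
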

\begin{proof}
First, we prove the identity for \(Q_j(u,u)\), it follows that \begin{equation}\label{Qj}
    Q_j(u,u)
=
\int_{\partial B_\theta(x_{p,j})}
\left[
(\Delta u)^2\nu_i
+
2\frac{\partial\Delta u}{\partial\nu}
\frac{\partial u}{\partial x_i}
-
2\Delta u
\frac{\partial}{\partial\nu}
\left(\frac{\partial u}{\partial x_i}\right)
\right]\,d\sigma .
\end{equation}
On the other hand, we have 
\begin{equation}\label{sandu1}
    \int_{B_\theta(x_{p,j})}
\Delta^2u\frac{\partial u}{\partial x_i}\,dx
=
\int_{\partial B_\theta(x_{p,j})}
\frac{\partial\Delta u}{\partial\nu}
\frac{\partial u}{\partial x_i}\,d\sigma -
\int_{\partial B_\theta(x_{p,j})}
\Delta u
\frac{\partial}{\partial\nu}
\left(\frac{\partial u}{\partial x_i}\right)\,d\sigma  +
\int_{B_\theta(x_{p,j})}
\Delta u
\frac{\partial\Delta u}{\partial x_i}\,dx 
\end{equation} and 
\begin{equation}\label{sandu2}
    \int_{B_\theta(x_{p,j})}
\Delta u
\frac{\partial\Delta u}{\partial x_i}\,dx
=
\frac12
\int_{\partial B_\theta(x_{p,j})}
(\Delta u)^2\nu_i\,d\sigma .
\end{equation}
Hence form \eqref{Qj},\eqref{sandu1} and \eqref{sandu2}, it follows that \begin{equation}\label{Qj2}
    Q_j(u,u)
=
2\int_{B_\theta(x_{p,j})}
\Delta^2u\frac{\partial u}{\partial x_i}\,dx .
\end{equation}
Since \(\Delta^2u=(u^+)^p\), it follows that \begin{equation}\label{Qj3}
    Q_j(u,u)
=
2\int_{B_\theta(x_{p,j})}
(u^+)^p\frac{\partial u}{\partial x_i}\,dx
=
\frac{2}{p+1}
\int_{B_\theta(x_{p,j})}
\frac{\partial}{\partial x_i}
\left((u^+)^{p+1}\right)\,dx .
\end{equation}
Using the divergence theorem, it follows that\[
Q_j(u,u)
=
\frac{2}{p+1}
\int_{\partial B_\theta(x_{p,j})}
(u^+)^{p+1}\nu_i\,d\sigma .
\]
\par Next we prove the identity for \(P_j(u,u)\), it follows that \begin{equation}\label{Pj1}
    P_j(u,u)
=
\int_{\partial B_\theta(x_{p,j})}
\Bigg[
-\theta(\Delta u)^2
-
2\theta
\frac{\partial\Delta u}{\partial\nu}
\frac{\partial u}{\partial\nu}+
2\Delta u
\frac{\partial}{\partial\nu}
\left\langle x-x_{p,j},\nabla u\right\rangle
\Bigg]\,d\sigma .
\end{equation}
On the other hand, we have\begin{equation}\label{psandu1}
    \begin{split}
        \int_{B_\theta(x_{p,j})}
\Delta^2u
\left\langle x-x_{p,j},\nabla u\right\rangle\,dx  
&=
\int_{\partial B_\theta(x_{p,j})}
\frac{\partial\Delta u}{\partial\nu}
\left\langle x-x_{p,j},\nabla u\right\rangle\,d\sigma  
-
\int_{\partial B_\theta(x_{p,j})}
\Delta u
\frac{\partial}{\partial\nu}
\left\langle x-x_{p,j},\nabla u\right\rangle\,d\sigma  \\
&\quad
+
\int_{B_\theta(x_{p,j})}
\Delta u\,
\Delta
\left\langle x-x_{p,j},\nabla u\right\rangle\,dx .
    \end{split}
\end{equation}
Since
\[
\Delta
\left\langle x-x_{p,j},\nabla u\right\rangle
=
\left\langle x-x_{p,j},\nabla\Delta u\right\rangle
+
2\Delta u ,
\] it follows that \begin{equation}\label{psandu2}    
        \int_{B_\theta(x_{p,j})}
\Delta u\,
\Delta
\left\langle x-x_{p,j},\nabla u\right\rangle\,dx  
=
\int_{B_\theta(x_{p,j})}
\Delta u
\left\langle x-x_{p,j},\nabla\Delta u\right\rangle\,dx
+
2\int_{B_\theta(x_{p,j})}
(\Delta u)^2\,dx .    
\end{equation}
Furthermore, 
\begin{equation}\label{psandu3}
    \begin{split}
    \int_{B_\theta(x_{p,j})}
\Delta u
\left\langle x-x_{p,j},\nabla\Delta u\right\rangle\,dx  
&=
\frac12
\int_{B_\theta(x_{p,j})}
\left\langle x-x_{p,j},
\nabla\big((\Delta u)^2\big)
\right\rangle\,dx  \\
&=
\frac12
\int_{\partial B_\theta(x_{p,j})}
(\Delta u)^2
\left\langle x-x_{p,j},\nu\right\rangle\,d\sigma
-
2\int_{B_\theta(x_{p,j})}
(\Delta u)^2\,dx .
 \end{split}
\end{equation}
From \eqref{psandu2} and \eqref{psandu3}, we have 
\begin{equation}\label{xiang1}
    \int_{B_\theta(x_{p,j})}
\Delta u\,
\Delta
\left\langle x-x_{p,j},\nabla u\right\rangle\,dx  =
\frac12
\int_{\partial B_\theta(x_{p,j})}
(\Delta u)^2
\left\langle x-x_{p,j},\nu\right\rangle\,d\sigma .
\end{equation}
Hence from \eqref{psandu1} and \eqref{xiang1}, it follows that\begin{equation}\label{xiang2}
    \begin{split}
        \int_{B_\theta(x_{p,j})}
\Delta^2u
\left\langle x-x_{p,j},\nabla u\right\rangle\,dx  
&=
\int_{\partial B_\theta(x_{p,j})}
\frac{\partial\Delta u}{\partial\nu}
\left\langle x-x_{p,j},\nabla u\right\rangle\,d\sigma  
-
\int_{\partial B_\theta(x_{p,j})}
\Delta u
\frac{\partial}{\partial\nu}
\left\langle x-x_{p,j},\nabla u\right\rangle\,d\sigma  \\
&\quad
+
\frac12
\int_{\partial B_\theta(x_{p,j})}
(\Delta u)^2
\left\langle x-x_{p,j},\nu\right\rangle\,d\sigma .
    \end{split}
\end{equation}
On \(\partial B_\theta(x_{p,j})\), one has $\left\langle x-x_{p,j},\nu\right\rangle=\theta$ and $\left\langle x-x_{p,j},\nabla u\right\rangle
=
\theta\frac{\partial u}{\partial\nu}$. Thus, it holds
\[
P_j(u,u)
=
-2
\int_{B_\theta(x_{p,j})}
\Delta^2u
\left\langle x-x_{p,j},\nabla u\right\rangle\,dx .
\]
Using again \(\Delta^2u=(u^+)^p\), we obtain
\[
P_j(u,u)
=
-2
\int_{B_\theta(x_{p,j})}
(u^+)^p
\left\langle x-x_{p,j},\nabla u\right\rangle\,dx .
\]
Hence, \begin{equation}\label{Pj2}
    P_j(u,u)
=
-\frac{2}{p+1}
\int_{B_\theta(x_{p,j})}
\left\langle x-x_{p,j},
\nabla\left((u^+)^{p+1}\right)
\right\rangle\,dx .
\end{equation}
By the divergence theorem, we have \begin{equation}\label{div}
\int_{B_\theta(x_{p,j})}
\left\langle x-x_{p,j},
\nabla\left((u^+)^{p+1}\right)
\right\rangle\,dx  =
\theta
\int_{\partial B_\theta(x_{p,j})}
(u^+)^{p+1}\,d\sigma
-
4
\int_{B_\theta(x_{p,j})}
(u^+)^{p+1}\,dx .
\end{equation}
Consequently, from  \eqref{Pj2} and \eqref{div}, it follows that
\[
P_j(u,u)
=
\frac{8}{p+1}
\int_{B_\theta(x_{p,j})}
(u^+)^{p+1}\,dx
-
\frac{2\theta}{p+1}
\int_{\partial B_\theta(x_{p,j})}
(u^+)^{p+1}\,d\sigma .
\]
The proof is complete.
\end{proof}

\begin{Prop} \label{prop:PohozaevLin}
Let $u\in C^4(\Omega)$ be a solution of \eqref{1.1},  $\xi\in C^{4}(\Omega)$ be a solution of ${\Delta}^2 \xi=p(u^+)^{p-1}\xi$ in $\Omega$ and  $x_{p,j}$, $j=1,\ldots, k$ be the points defined in \eqref{def:xpj} and let $\theta>0$ be such that $B_{2\theta}(x_{p,j})\subset\Omega$.
Then
\begin{equation}\label{dafd}
Q_{j}\big(\xi,u\big)=\int_{\partial B_{\theta}(x_{p,j})}(u^+)^p \xi\nu_i\,d\sigma,
\end{equation}
and
\begin{equation}\label{07-08-22}
P_{j}\big(\xi,u\big)
=4\int_{B_{\theta}(x_{p,j})} (u^+)^p\xi \,dx
-\theta \int_{\partial B_{\theta}(x_{p,j})} (u^+)^p\xi \,d\sigma,
\end{equation}
where   $Q_{j}$ and $P_{j}$ are the quadratic forms in \eqref{07-08-20} and \eqref{abd}, $\nu =\big(\nu_{1},\nu_2,\nu_{3},\nu_4\big)$ is the outward unit normal of $\partial B_\theta(x_{p,j})$.
\end{Prop}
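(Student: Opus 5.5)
We polarize the identities of the previous lemma. The forms $Q_j$ and $P_j$ are bilinear: their quadratic versions on the diagonal are the boundary integrals \eqref{Qj} and \eqref{Pj1}. For a pair $(\xi,u)$ we take the symmetrized versions, namely
\[
Q_j(\xi,u)=\int_{\partial B_\theta(x_{p,j})}\Big[\Delta\xi\Delta u\,\nu_i+\frac{\partial\Delta\xi}{\partial\nu}\frac{\partial u}{\partial x_i}+\frac{\partial\Delta u}{\partial\nu}\frac{\partial \xi}{\partial x_i}-\Delta\xi\frac{\partial}{\partial\nu}\Big(\frac{\partial u}{\partial x_i}\Big)-\Delta u\frac{\partial}{\partial\nu}\Big(\frac{\partial \xi}{\partial x_i}\Big)\Big]d\sigma,
\]
and $P_j(\xi,u)$ is defined analogously from \eqref{Pj1}. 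With these normalizations $Q_j(u,u)$ and $P_j(u,u)$ reduce to the earlier expressions.

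\textbf{Step 1 (translation identity).} Repeating the integration by parts \eqref{sandu1} with the pairs $(\Delta^2\xi,\partial_i u)$ and $(\Delta^2 u,\partial_i\xi)$ and adding, the interior terms $\int\Delta \xi\,\partial_i\Delta u+\int\Delta u\,\partial_i\Delta\xi$ combine into $\int\partial_i(\Delta\xi\Delta u)=\int_{\partial}\Delta\xi\Delta u\,\nu_i$. This gives
\[
Q_j(\xi,u)=\int_{B_\theta}\big(\Delta^2\xi\,\partial_i u+\Delta^2u\,\partial_i\xi\big)dx=\int_{B_\theta}\big(p(u^+)^{p-1}\partial_iu\,\xi+(u^+)^p\partial_i\xi\big)dx=\int_{B_\theta}\partial_i\big((u^+)^p\xi\big)dx .
\]
The divergence theorem then yields \eqref{dafd}.

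\textbf{Step 2 (dilation identity).} Set $y=x-x_{p,j}$. Apply \eqref{psandu1} to the pairs $(\xi,u)$ and $(u,\xi)$ and use $\Delta\langle y,\nabla w\rangle=\langle y,\nabla\Delta w\rangle+2\Delta w$. The interior terms become
\[
\int\big(\Delta\xi\langle y,\nabla\Delta u\rangle+\Delta u\langle y,\nabla\Delta\xi\rangle\big)+4\int\Delta\xi\Delta u=\int\langle y,\nabla(\Delta\xi\Delta u)\rangle+4\int\Delta\xi\Delta u=\theta\int_\partial\Delta\xi\Delta u .
\]
This uses $\operatorname{div} y=4$, exactly as in \eqref{psandu3}. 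Using $\langle y,\nu\rangle=\theta$ and $\langle y,\nabla w\rangle=\theta\partial_\nu w$ on the sphere, the boundary terms assemble into $-P_j(\xi,u)$. Hence
\[
P_j(\xi,u)=-\int_{B_\theta}\big(\Delta^2\xi\langle y,\nabla u\rangle+\Delta^2u\langle y,\nabla\xi\rangle\big)=-\int_{B_\theta}\langle y,\nabla((u^+)^p\xi)\rangle dx .
\]
A final divergence theorem gives $4\int_{B_\theta}(u^+)^p\xi-\theta\int_{\partial B_\theta}(u^+)^p\xi$, which is \eqref{07-08-22}.

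\textbf{Main obstacle.} There is no analytic difficulty. The only delicate point is bookkeeping, in three places. First, the bilinear forms \eqref{07-08-20} and \eqref{abd} must be normalized so that the factor $2$ in the quadratic case becomes $1$ here. Second, the product rule $p(u^+)^{p-1}\partial u\,\xi+(u^+)^p\partial\xi=\partial((u^+)^p\xi)$ requires $(u^+)^p\in C^1$, which holds for $p>1$. Third, the sign conventions in $P_j$ must match those in \eqref{Pj1}.
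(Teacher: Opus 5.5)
Your proposal is correct and follows essentially the same route as the paper: integrate by parts for the pairs $(\Delta^2\xi,\partial_i u)$ and $(\Delta^2 u,\partial_i\xi)$, and respectively with $\langle x-x_{p,j},\nabla\cdot\rangle$. You then merge the interior terms into a divergence and identify the boundary terms with $Q_j(\xi,u)$ and $-P_j(\xi,u)$, finishing with the divergence theorem applied to $(u^+)^p\xi$. The paper's forms $P_j$ and $Q_j$ are already defined as the symmetrized bilinear expressions, so your normalization and sign checks agree with them.
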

\begin{proof}
From \begin{equation}\label{5-20-1}
{\Delta}^2 u=(u^+)^p\,\,~\mbox{and}~
{\Delta}^2 \xi=p(u^+)^{p-1}\xi,
\end{equation}
We first prove the identity for \(Q_j(\xi,u)\), it follows that \begin{equation}\label{QJ1}
        Q_j(\xi,u)
=
\int_{\partial B_\theta(x_{p,j})}
\Bigg[
\Delta\xi\,\Delta u\,\nu_i
+
\frac{\partial\Delta\xi}{\partial\nu}
\frac{\partial u}{\partial x_i}
+
\frac{\partial\Delta u}{\partial\nu}
\frac{\partial \xi}{\partial x_i} 
-
\Delta\xi
\frac{\partial}{\partial\nu}
\left(\frac{\partial u}{\partial x_i}\right)
-
\Delta u
\frac{\partial}{\partial\nu}
\left(\frac{\partial \xi}{\partial x_i}\right)
\Bigg]\,d\sigma .
\end{equation}
On the other hand, we have \[
\begin{aligned}
\int_{B_\theta(x_{p,j})}\Delta^2\xi
\frac{\partial u}{\partial x_i}\,dx
=
\int_{\partial B_\theta(x_{p,j})}
\frac{\partial\Delta\xi}{\partial\nu}
\frac{\partial u}{\partial x_i}\,d\sigma -
\int_{\partial B_\theta(x_{p,j})}
\Delta\xi
\frac{\partial}{\partial\nu}
\left(\frac{\partial u}{\partial x_i}\right)d\sigma +
\int_{B_\theta(x_{p,j})}
\Delta\xi\,
\frac{\partial\Delta u}{\partial x_i}\,dx .
\end{aligned}
\]
Similarly,
\[
\begin{aligned}
\int_{B_\theta(x_{p,j})}\Delta^2u
\frac{\partial \xi}{\partial x_i}\,dx
=
\int_{\partial B_\theta(x_{p,j})}
\frac{\partial\Delta u}{\partial\nu}
\frac{\partial \xi}{\partial x_i}\,d\sigma -
\int_{\partial B_\theta(x_{p,j})}
\Delta u
\frac{\partial}{\partial\nu}
\left(\frac{\partial \xi}{\partial x_i}\right)d\sigma+
\int_{B_\theta(x_{p,j})}
\Delta u\,
\frac{\partial\Delta \xi}{\partial x_i}\,dx .
\end{aligned}
\]
Adding the two identities, and using
\[
\Delta\xi\frac{\partial\Delta u}{\partial x_i}
+
\Delta u\frac{\partial\Delta \xi}{\partial x_i}
=
\frac{\partial}{\partial x_i}(\Delta u\,\Delta\xi),
\]
we obtain \begin{equation}\label{zhankai1}
    \begin{split}
        \int_{B_\theta(x_{p,j})}
\left(
\Delta^2\xi
\frac{\partial u}{\partial x_i}
+
\Delta^2u
\frac{\partial \xi}{\partial x_i}
\right)dx 
=&
\int_{\partial B_\theta(x_{p,j})}
\Bigg[
\Delta\xi\,\Delta u\,\nu_i
+
\frac{\partial\Delta\xi}{\partial\nu}
\frac{\partial u}{\partial x_i}
+
\frac{\partial\Delta u}{\partial\nu}
\frac{\partial \xi}{\partial x_i} \\
&\qquad
-
\Delta\xi
\frac{\partial}{\partial\nu}
\left(\frac{\partial u}{\partial x_i}\right)
-
\Delta u
\frac{\partial}{\partial\nu}
\left(\frac{\partial \xi}{\partial x_i}\right)
\Bigg]\,d\sigma .
    \end{split}
\end{equation}
Hence from \eqref{QJ1} and \eqref{zhankai1}, it follows that
\begin{equation}\label{QJ2}
    \begin{split}
        Q_j(\xi,u)
=
\int_{B_\theta(x_{p,j})}
\left(
\Delta^2\xi
\frac{\partial u}{\partial x_i}
+
\Delta^2u
\frac{\partial \xi}{\partial x_i}
\right)dx .
    \end{split}
\end{equation}
Substituting $\Delta^2u=(u^+)^p$ and $\Delta^2\xi=p (u^+)^{p-1}\xi$ into \eqref{QJ2}, we have
\[
Q_j(\xi,u)
=
\int_{B_\theta(x_{p,j})}
\left(
p (u^+)^{p-1}\xi
\frac{\partial u}{\partial x_i}
+
(u^+)^p
\frac{\partial \xi}{\partial x_i}
\right)dx 
=
\int_{B_\theta(x_{p,j})}
\frac{\partial}{\partial x_i}((u^+)^p\xi)\,dx 
=
\int_{\partial B_\theta(x_{p,j})}
(u^+)^p\xi\nu_i\,d\sigma .
\]
\par Next, we prove the identity for \(P_j(\xi,u)\), it follows that \begin{equation}\label{PJ1}
    \begin{split}
        P_j(\xi,u)
=
\int_{\partial B_\theta(x_{p,j})}
\Bigg[
&-\theta\Delta\xi\,\Delta u -\theta
\left(
\frac{\partial\Delta\xi}{\partial\nu}
\frac{\partial u}{\partial\nu}
+
\frac{\partial\Delta u}{\partial\nu}
\frac{\partial \xi}{\partial\nu}
\right) \\
&+
\Delta\xi
\frac{\partial}{\partial\nu}
\langle x-x_{p,j},\nabla u\rangle+
\Delta u
\frac{\partial}{\partial\nu}
\langle x-x_{p,j},\nabla \xi\rangle
\Bigg]\,d\sigma .
    \end{split}
\end{equation}
On the other hand, we have\begin{equation}\label{ZK1}
    \begin{split}
        \int_{B_\theta(x_{p,j})}
\Delta^2\xi\,
\langle x-x_{p,j},\nabla u\rangle\,dx 
=&
\int_{\partial B_\theta(x_{p,j})}
\frac{\partial\Delta\xi}{\partial\nu}
\langle x-x_{p,j},\nabla u\rangle\,d\sigma -
\int_{\partial B_\theta(x_{p,j})}
\Delta\xi
\frac{\partial}{\partial\nu}
\langle x-x_{p,j},\nabla u\rangle\,d\sigma \\
&+
\int_{B_\theta(x_{p,j})}
\Delta\xi\,
\Delta\langle x-x_{p,j},\nabla u\rangle\,dx .
    \end{split}
\end{equation}
Since
\[
\Delta\langle x-x_{p,j},\nabla u\rangle
=
\langle x-x_{p,j},\nabla\Delta u\rangle
+
2\Delta u,
\]
we obtain
\[
\begin{aligned}
\int_{B_\theta(x_{p,j})}
\Delta^2\xi\,
\langle x-x_{p,j},\nabla u\rangle\,dx 
=&
\int_{\partial B_\theta(x_{p,j})}
\frac{\partial\Delta\xi}{\partial\nu}
\langle x-x_{p,j},\nabla u\rangle\,d\sigma 
-
\int_{\partial B_\theta(x_{p,j})}
\Delta\xi
\frac{\partial}{\partial\nu}
\langle x-x_{p,j},\nabla u\rangle\,d\sigma \\
&+
\int_{B_\theta(x_{p,j})}
\Delta\xi
\langle x-x_{p,j},\nabla\Delta u\rangle\,dx 
+
2\int_{B_\theta(x_{p,j})}
\Delta\xi\,\Delta u\,dx .
\end{aligned}
\]
Similarly,
\[
\begin{aligned}
\int_{B_\theta(x_{p,j})}
\Delta^2u\,
\langle x-x_{p,j},\nabla \xi\rangle\,dx 
=&
\int_{\partial B_\theta(x_{p,j})}
\frac{\partial\Delta u}{\partial\nu}
\langle x-x_{p,j},\nabla \xi\rangle\,d\sigma 
-
\int_{\partial B_\theta(x_{p,j})}
\Delta u
\frac{\partial}{\partial\nu}
\langle x-x_{p,j},\nabla \xi\rangle\,d\sigma \\
&+
\int_{B_\theta(x_{p,j})}
\Delta u
\langle x-x_{p,j},\nabla\Delta \xi\rangle\,dx 
+
2\int_{B_\theta(x_{p,j})}
\Delta u\,\Delta\xi\,dx .
\end{aligned}
\]
Adding the two identities, we get
\begin{equation}\label{ZK2}
    \begin{split}
        &\int_{B_\theta(x_{p,j})}
\left[
\Delta^2\xi
\langle x-x_{p,j},\nabla u\rangle
+
\Delta^2u
\langle x-x_{p,j},\nabla \xi\rangle
\right]dx \\
=&
\int_{\partial B_\theta(x_{p,j})}
\Bigg[
\frac{\partial\Delta\xi}{\partial\nu}
\langle x-x_{p,j},\nabla u\rangle
+
\frac{\partial\Delta u}{\partial\nu}
\langle x-x_{p,j},\nabla \xi\rangle 
-
\Delta\xi
\frac{\partial}{\partial\nu}
\langle x-x_{p,j},\nabla u\rangle
-
\Delta u
\frac{\partial}{\partial\nu}
\langle x-x_{p,j},\nabla \xi\rangle
\Bigg]d\sigma \\
&+
\int_{B_\theta(x_{p,j})}
\Big[
\Delta\xi
\langle x-x_{p,j},\nabla\Delta u\rangle
+
\Delta u
\langle x-x_{p,j},\nabla\Delta \xi\rangle 
+
4\Delta u\,\Delta\xi
\Big]dx .
    \end{split}
\end{equation}
Since \[
\Delta\xi
\langle x-x_{p,j},\nabla\Delta u\rangle
+
\Delta u
\langle x-x_{p,j},\nabla\Delta \xi\rangle
+
4\Delta u\,\Delta\xi 
=
\langle x-x_{p,j},\nabla(\Delta u\,\Delta\xi)\rangle
+
4\Delta u\,\Delta\xi ,
\] we have
\[
\langle x-x_{p,j},\nabla(\Delta u\,\Delta\xi)\rangle
+
4\Delta u\,\Delta\xi
=
\operatorname{div}\left((x-x_{p,j})\Delta u\,\Delta\xi\right).
\]
Hence,
\[
\int_{B_\theta(x_{p,j})}
\Big[
\Delta\xi
\langle x-x_{p,j},\nabla\Delta u\rangle
+
\Delta u
\langle x-x_{p,j},\nabla\Delta \xi\rangle
+
4\Delta u\,\Delta\xi
\Big]dx 
=
\int_{\partial B_\theta(x_{p,j})}
\langle x-x_{p,j},\nu\rangle
\Delta u\,\Delta\xi\,d\sigma .
\]
It follows that
\begin{equation}\label{RHS}
    \begin{split}
       &-
\int_{B_\theta(x_{p,j})}
\left[
\Delta^2\xi
\langle x-x_{p,j},\nabla u\rangle
+
\Delta^2u
\langle x-x_{p,j},\nabla \xi\rangle
\right]dx \\
=&
\int_{\partial B_\theta(x_{p,j})}
\Bigg[
-\theta\Delta\xi\,\Delta u 
-\theta
\left(
\frac{\partial\Delta\xi}{\partial\nu}
\frac{\partial u}{\partial\nu}
+
\frac{\partial\Delta u}{\partial\nu}
\frac{\partial \xi}{\partial\nu}
\right)  +
\Delta\xi
\frac{\partial}{\partial\nu}
\langle x-x_{p,j},\nabla u\rangle
+
\Delta u
\frac{\partial}{\partial\nu}
\langle x-x_{p,j},\nabla \xi\rangle
\Bigg]d\sigma .
    \end{split}
\end{equation}
The right-hand side of \eqref{RHS} is exactly \(P_j(\xi,u)\). Hence, \[
P_j(\xi,u)
=
-
\int_{B_\theta(x_{p,j})}
\left[
\Delta^2\xi
\langle x-x_{p,j},\nabla u\rangle
+
\Delta^2u
\langle x-x_{p,j},\nabla \xi\rangle
\right]dx .
\]
By $\Delta^2u=(u^+)^p$ and $\Delta^2\xi=p (u^+)^{p-1}\xi$,
we obtain
\begin{equation}\label{PJJJJ}
    \begin{split}
        P_j(\xi,u)
=&
-
\int_{B_\theta(x_{p,j})}
\left[
p (u^+)^{p-1}\xi
\langle x-x_{p,j},\nabla u\rangle
+
(u^+)^p
\langle x-x_{p,j},\nabla \xi\rangle
\right]dx \\
=&
-
\int_{B_\theta(x_{p,j})}
\langle x-x_{p,j},\nabla((u^+)^p\xi)\rangle dx .
    \end{split}
\end{equation}
Since \begin{equation}\label{DIV}
    \begin{split}
        \int_{B_\theta(x_{p,j})}
\langle x-x_{p,j},\nabla((u^+)^p\xi)\rangle dx 
=&
\int_{\partial B_\theta(x_{p,j})}
\langle x-x_{p,j},\nu\rangle (u^+)^p\xi\,d\sigma
-
4\int_{B_\theta(x_{p,j})}(u^+)^p\xi\,dx \\
=&
\theta\int_{\partial B_\theta(x_{p,j})}(u^+)^p\xi\,d\sigma
-
4\int_{B_\theta(x_{p,j})}(u^+)^p\xi\,dx .
    \end{split}
\end{equation}
Hence from \eqref{PJJJJ} and \eqref{DIV}, we have \[
P_j(\xi,u)
=
4\int_{B_\theta(x_{p,j})}(u^+)^p\xi\,dx
-
\theta\int_{\partial B_\theta(x_{p,j})}(u^+)^p\xi\,d\sigma .
\]
The proof is complete.
\end{proof}

\subsection{Quadratic forms} $\,$  \vskip 0.2cm

For every $x_{p,j},$ $j=1,\ldots, k$ given by \eqref{def:xpj}, we introduce the following two quadratic forms \begin{equation}\label{07-08-20}
\begin{split}
P_j(u,v)
:=
\int_{\partial B_\theta(x_{p,j})}
\Bigg[
&-\theta\,\Delta u\,\Delta v -\theta
\left(
\frac{\partial\Delta u}{\partial\nu}\frac{\partial v}{\partial\nu}
+
\frac{\partial\Delta v}{\partial\nu}\frac{\partial u}{\partial\nu}
\right)\\
&+
\Delta u\,
\frac{\partial}{\partial\nu}
\left\langle x-x_{p,j},\nabla v\right\rangle
+
\Delta v\,
\frac{\partial}{\partial\nu}
\left\langle x-x_{p,j},\nabla u\right\rangle
\Bigg]\,d\sigma .
\end{split}
\end{equation}
and for fixed \(i\in\{1,\dots,4\}\),
\begin{equation}\label{abd}
    Q_j(u,v)
:=
\int_{\partial B_\theta(x_{p,j})}
\Bigg[
\Delta u\,\Delta v\,\nu_i +
\frac{\partial\Delta u}{\partial\nu}
\frac{\partial v}{\partial x_i}
+
\frac{\partial\Delta v}{\partial\nu}
\frac{\partial u}{\partial x_i}
-
\Delta u
\frac{\partial}{\partial\nu}
\left(\frac{\partial v}{\partial x_i}\right)
-
\Delta v
\frac{\partial}{\partial\nu}
\left(\frac{\partial u}{\partial x_i}\right)
\Bigg]\,d\sigma .
\end{equation}
where $u,v\in C^{4}(\overline{\Omega})$, $\theta>0$ is such that $B_{2\theta}(x_{p,j})\subset\Omega$ and $\nu=\big(\nu_{1},\nu_2,\nu_{3},\nu_4\big)$ stands for the outward unit normal to $\partial B_{\theta}(x_{p,j})$.
\begin{Lem}\label{llas}
Assume that $u$ and $v$ are biharmonic in $ B_d(x_{p,j})\backslash \{x_{p,j}\}$, then $P_{j}(u,v)$ and $Q_{j}(u,v)$ are  independent of the choice of $\theta\in (0,d]$.
\end{Lem}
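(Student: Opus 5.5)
The plan is to take $0<\theta_1<\theta_2\le d$ and show that $P_j(u,v)$ and $Q_j(u,v)$ take the same value at both radii. We work on the annulus $A:=B_{\theta_2}(x_{p,j})\setminus \overline{B_{\theta_1}(x_{p,j})}$, where $u$ and $v$ are smooth and $\Delta^2u=\Delta^2v=0$. The argument reuses the integration by parts already carried out in the proofs of \eqref{aclp-1}, \eqref{aclp-10} and Proposition \ref{prop:PohozaevLin}, now on $A$ instead of a ball.

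\textbf{The form $Q_j$.} On $A$ we integrate the expression $\Delta^2u\,\partial_i v+\Delta^2v\,\partial_i u$, which vanishes identically.
\begin{enumerate}
\item Integrate by parts twice, exactly as in \eqref{zhankai1}.
\item Use $\Delta v\,\partial_i\Delta u+\Delta u\,\partial_i\Delta v=\partial_i(\Delta u\Delta v)$, so the remaining interior term becomes a boundary term.
\end{enumerate}
The integrals then reduce to the boundary integrand of \eqref{abd} over $\partial A$. The boundary $\partial A$ consists of the outer sphere with outward normal $\nu$ and the inner sphere with normal $-\nu$. Every term of the integrand of \eqref{abd} is linear in $\nu$: it contains either $\nu_i$ or exactly one normal derivative $\partial_\nu$. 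Hence the integral over $\partial A$ equals $Q_j$ at radius $\theta_2$ minus $Q_j$ at radius $\theta_1$. Since it equals $0$, the two values agree.

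\textbf{The form $P_j$.} We integrate $\Delta^2u\,\langle x-x_{p,j},\nabla v\rangle+\Delta^2v\,\langle x-x_{p,j},\nabla u\rangle=0$ over $A$, following the computation leading to \eqref{ZK2}.
\begin{enumerate}
\item The interior remainder is $\operatorname{div}\big((x-x_{p,j})\Delta u\Delta v\big)$, so it becomes a boundary term.
\item The boundary terms are $\langle x-x_{p,j},\nu\rangle\Delta u\Delta v$, terms with one $\partial_\nu$ applied to $\Delta u$ or $\Delta v$ times $\langle x-x_{p,j},\nabla\cdot\rangle$, and $\Delta u\,\partial_\nu\langle x-x_{p,j},\nabla v\rangle$ together with its symmetric counterpart.
\end{enumerate}
The integrand of \eqref{07-08-20} is obtained from these boundary terms by substituting $\langle x-x_{p,j},\nu\rangle=\theta$ and $\langle x-x_{p,j},\nabla w\rangle=\theta\,\partial_\nu w$ on $\partial B_\theta(x_{p,j})$.

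\textbf{Main obstacle: matching the signs.} The delicate point is checking that, before this substitution, every boundary term is odd in the normal. The vector field $x-x_{p,j}$ does not depend on the choice of normal. The factors carrying $\nu$ are $\langle x-x_{p,j},\nu\rangle$ and the single $\partial_\nu$, each appearing exactly once per term. So each term flips sign between the outer and inner spheres, and the flux over $\partial A$ is the difference of the two sphere integrals. After applying the substitutions on each sphere separately, this difference is $-(P_j|_{\theta_2}-P_j|_{\theta_1})$, up to the overall sign convention fixed in \eqref{RHS}. That gives $P_j|_{\theta_2}=P_j|_{\theta_1}$.

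The sign bookkeeping is the only step I expect to need care. A cleaner way to organize it is to write $P_j(u,v)=\int_{\partial B_\theta}\langle F,\nu\rangle\,d\sigma$ with a vector field $F$ independent of $\theta$, and similarly $Q_j(u,v)=\int_{\partial B_\theta}\langle F^{(i)},\nu\rangle\,d\sigma$. The computations above show $\operatorname{div}F=\operatorname{div}F^{(i)}=0$ wherever $\Delta^2u=\Delta^2v=0$. The divergence theorem on $A$ then gives the independence of $\theta$ at once.
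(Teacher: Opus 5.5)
Your proposal is correct and is essentially the paper's proof. You integrate $\Delta^2u\,\partial_iv+\Delta^2v\,\partial_iu$ and $\Delta^2u\,\langle x-x_{p,j},\nabla v\rangle+\Delta^2v\,\langle x-x_{p,j},\nabla u\rangle$ over the annulus, absorb the leftover volume terms via $\partial_i(\Delta u\Delta v)$ and $\operatorname{div}\big((x-x_{p,j})\Delta u\Delta v\big)$, and use that every boundary term is linear in the normal, so the flux is the difference of the two sphere integrals. Your divergence-free vector field formulation is just a compact repackaging of the same computation; as in the paper, the endpoint $\theta=d$ tacitly requires $u,v$ to be $C^3$ up to $\partial B_d(x_{p,j})$.
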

\begin{proof}
Let $\Omega'\subset \Omega$ be a smooth bounded domain. An integration by parts gives
\begin{equation}\label{07-07-1}
\begin{split}
\begin{aligned}
\int_{\Omega'}
\Delta^2u
\left\langle x-x_{p,j},\nabla v\right\rangle\,dx
=&
\int_{\partial {\Omega'}}
\frac{\partial\Delta u}{\partial\nu}
\left\langle x-x_{p,j},\nabla v\right\rangle\,d\sigma 
-
\int_{\partial {\Omega'}}
\Delta u
\frac{\partial}{\partial\nu}
\left\langle x-x_{p,j},\nabla v\right\rangle\,d\sigma \\
&+
\int_{\Omega'}
\Delta u\,
\Delta
\left\langle x-x_{p,j},\nabla v\right\rangle\,dx .
\end{aligned}
\end{split}
\end{equation}
Let $\theta_{1}<\theta_{2}<d$ and $\Omega'=B_{\theta_{2}}(x_{p,j})\backslash B_{\theta_{1}}(x_{p,j})$ in  \eqref{07-07-1}. 
Since
\[
\Delta
\left\langle x-x_{p,j},\nabla v\right\rangle
=
\left\langle x-x_{p,j},\nabla\Delta v\right\rangle
+
2\Delta v,
\]
we obtain
\[
\begin{aligned}
\int_{\Omega'}
\Delta^2u
\left\langle x-x_{p,j},\nabla v\right\rangle\,dx
=&
\int_{\partial \Omega'}
\frac{\partial\Delta u}{\partial\nu}
\left\langle x-x_{p,j},\nabla v\right\rangle\,d\sigma 
-
\int_{\partial \Omega'}
\Delta u
\frac{\partial}{\partial\nu}
\left\langle x-x_{p,j},\nabla v\right\rangle\,d\sigma \\
&+
\int_{\Omega'}
\Delta u
\left\langle x-x_{p,j},\nabla\Delta v\right\rangle\,dx 
+
2\int_{\Omega'}\Delta u\,\Delta v\,dx .
\end{aligned}
\]
Interchanging the roles of 
$u$ and $v$, we similarly obtain
\[
\begin{aligned}
\int_{\Omega'}
\Delta^2v
\left\langle x-x_{p,j},\nabla u\right\rangle\,dx
=&
\int_{\partial {\Omega'}}
\frac{\partial\Delta v}{\partial\nu}
\left\langle x-x_{p,j},\nabla u\right\rangle\,d\sigma 
-
\int_{\partial {\Omega'}}
\Delta v
\frac{\partial}{\partial\nu}
\left\langle x-x_{p,j},\nabla u\right\rangle\,d\sigma \\
&+
\int_{\Omega'}
\Delta v
\left\langle x-x_{p,j},\nabla\Delta u\right\rangle\,dx 
+
2\int_{\Omega'}\Delta u\,\Delta v\,dx .
\end{aligned}
\]
Adding the two identities, the volume terms become
\[
\begin{aligned}
&\int_{\Omega'}
\left[
\Delta u
\left\langle x-x_{p,j},\nabla\Delta v\right\rangle
+
\Delta v
\left\langle x-x_{p,j},\nabla\Delta u\right\rangle
+
4\Delta u\,\Delta v
\right]\,dx .
\end{aligned}
\]
Since
\[
\Delta u
\left\langle x-x_{p,j},\nabla\Delta v\right\rangle
+
\Delta v
\left\langle x-x_{p,j},\nabla\Delta u\right\rangle
=
\left\langle x-x_{p,j},
\nabla(\Delta u\,\Delta v)
\right\rangle ,
\]
and since the dimension is \(4\), we have$\operatorname{div}(x-x_{p,j})=4$. Therefore, it follows that
\[
\begin{aligned}
\left\langle x-x_{p,j},
\nabla(\Delta u\,\Delta v)
\right\rangle
+
4\Delta u\,\Delta v 
=
\operatorname{div}
\left(
(x-x_{p,j})\Delta u\,\Delta v
\right).
\end{aligned}
\]
Hence, the volume terms reduce to the boundary term
\[
\int_{\partial \Omega'}
\left\langle x-x_{p,j},\nu\right\rangle
\Delta u\,\Delta v\,d\sigma .
\]
Consequently,
\[
\begin{aligned}
0
=&
-\int_{\Omega'}
\left[
\Delta^2u
\left\langle x-x_{p,j},\nabla v\right\rangle
+
\Delta^2v
\left\langle x-x_{p,j},\nabla u\right\rangle
\right]\,dx \\
=&
\int_{\partial \Omega'}
\Bigg[
-\left\langle x-x_{p,j},\nu\right\rangle
\Delta u\,\Delta v 
-
\frac{\partial\Delta u}{\partial\nu}
\left\langle x-x_{p,j},\nabla v\right\rangle
-
\frac{\partial\Delta v}{\partial\nu}
\left\langle x-x_{p,j},\nabla u\right\rangle \\
&\qquad
+
\Delta u
\frac{\partial}{\partial\nu}
\left\langle x-x_{p,j},\nabla v\right\rangle
+
\Delta v
\frac{\partial}{\partial\nu}
\left\langle x-x_{p,j},\nabla u\right\rangle
\Bigg]\,d\sigma .
\end{aligned}
\]
On \(\partial B_\theta(x_{p,j})\), we have $\left\langle x-x_{p,j},\nu\right\rangle=\theta$.
and then $\left\langle x-x_{p,j},\nabla v\right\rangle
=
\theta\frac{\partial v}{\partial\nu}$ with $\left\langle x-x_{p,j},\nabla u\right\rangle
=
\theta\frac{\partial u}{\partial\nu}$.
Thus, the boundary identity gives
\[
P_j(u,v;\theta_2)=P_j(u,v;\theta_1).
\]
Hence \(P_j(u,v)\) is independent of \(\theta\).

\vskip 0.1cm

On the other hand, the integration of parts twice gives
\[
\begin{aligned}
\int_{\Omega'} \Delta^2u\frac{\partial v}{\partial x_i}\,dx
=
\int_{\partial {\Omega'}}
\frac{\partial\Delta u}{\partial\nu}
\frac{\partial v}{\partial x_i}\,d\sigma -
\int_{\partial {\Omega'}}
\Delta u
\frac{\partial}{\partial\nu}
\left(
\frac{\partial v}{\partial x_i}
\right)\,d\sigma+
\int_{\Omega'}
\Delta u\,
\frac{\partial\Delta v}{\partial x_i}\,dx .
\end{aligned}
\]
Similarly,
\[
\begin{aligned}
\int_{\Omega'} \Delta^2v\frac{\partial u}{\partial x_i}\,dx
=
\int_{\partial {\Omega'}}
\frac{\partial\Delta v}{\partial\nu}
\frac{\partial u}{\partial x_i}\,d\sigma -
\int_{\partial {\Omega'}}
\Delta v
\frac{\partial}{\partial\nu}
\left(
\frac{\partial u}{\partial x_i}
\right)\,d\sigma +
\int_{\Omega'}
\Delta v\,
\frac{\partial\Delta u}{\partial x_i}\,dx .
\end{aligned}
\]
Adding the two identities, the remaining volume term is
\[
\int_{\Omega'}
\left(
\Delta u\frac{\partial\Delta v}{\partial x_i}
+
\Delta v\frac{\partial\Delta u}{\partial x_i}
\right)\,dx.
\]
Since
\[
\Delta u\frac{\partial\Delta v}{\partial x_i}
+
\Delta v\frac{\partial\Delta u}{\partial x_i}
=
\frac{\partial}{\partial x_i}
\left(\Delta u\,\Delta v\right),
\]
we obtain
\[
\int_{\Omega'}
\left(
\Delta u\frac{\partial\Delta v}{\partial x_i}
+
\Delta v\frac{\partial\Delta u}{\partial x_i}
\right)\,dx
=
\int_{\partial {\Omega'}}\Delta u\,\Delta v\,\nu_i\,d\sigma .
\]
Therefore,
\[
\begin{aligned}
0
=
\int_{\partial {\Omega'}}
\Bigg[
\Delta u\,\Delta v\,\nu_i
+
\frac{\partial\Delta u}{\partial\nu}
\frac{\partial v}{\partial x_i}
+
\frac{\partial\Delta v}{\partial\nu}
\frac{\partial u}{\partial x_i} 
-
\Delta u
\frac{\partial}{\partial\nu}
\left(
\frac{\partial v}{\partial x_i}
\right)
-
\Delta v
\frac{\partial}{\partial\nu}
\left(
\frac{\partial u}{\partial x_i}
\right)
\Bigg]\,d\sigma .
\end{aligned}
\]
On \(\partial B_{\theta_2}(x_{p,j})\), the outward normal of \({\Omega'}\) is the
outward normal of the ball, while on \(\partial B_{\theta_1}(x_{p,j})\) it is
the opposite of the outward normal of the ball. Since all terms in the boundary
integrand are linear in \(\nu\), we get
\[
Q_j(u,v;\theta_2)=Q_j(u,v;\theta_1).
\]
Thus, \(Q_j(u,v)\) is independent of \(\theta\).
\par Combining the two parts, both \(P_j(u,v)\) and \(Q_j(u,v)\) are independent of
\(\theta\in(0,d]\).
\end{proof}

Next we derive some key computations about the quadratic forms $P_{j}$ and $Q_{j}$ and Green's function.
\begin{Prop}\label{lem2-1}
It holds
\begin{equation}\label{1-1}
P_{j}\Big(G(x_{p,s},x), G(x_{p,m},x)\Big)=
\begin{cases}
\frac{1}{8\pi^{2}} ~&\mbox{for}~s=m=j,\\[1mm]
0~&\mbox{for}~s\neq j ~{or}~m \neq j,
\end{cases}
\end{equation}
and for $h=1,2,3,4$,
 \begin{equation}\label{abb1-1}
P_{j}\Big(G(x_{p,s},x),\partial_hG(x_{p,m},x)\Big)=
\begin{cases}
-\frac{1}{2}\frac{\partial R(x_{p,j})}{\partial x_h} ~&\mbox{for}~s=m=j,\\[1mm]
 -D_{x_h}G \big(x_{p,s},x_{p,j}\big) ~&\mbox{for} ~m=j,~s\neq j,\\[1mm]
0 ~&\mbox{for}~m\neq j.
\end{cases}
\end{equation}
Moreover, we have
\begin{equation}\label{aluo1}
Q_{j}\Big(G(x_{p,m},x),G(x_{p,s},x)\Big)=
\begin{cases}
\frac{\partial R(x_{p,j})}{\partial{x_i}} ~&\mbox{for}~s=m=j,\\[1mm]
D_{x_i}G(x_{p,m},x_{p,j})
 ~&\mbox{for}~m\neq j,~s=j,\\[1mm]
D_{x_i}G(x_{p,s},x_{p,j})
~&\mbox{for}~m=j,~s\neq j,\\[1mm]
0 ~&\mbox{for}~s,m\neq j,
\end{cases}
\end{equation}
and for $h=1,2,3,4$,
\begin{equation}\label{aluo41}
Q_{j}\Big(G(x_{p,m},x),\partial_h G(x_{p,s},x)\Big)=
\begin{cases}
\frac{1}{2}\frac{\partial^2 R(x_{p,j})}{\partial{x_i}\partial {x_h}} ~&\mbox{for}~s=m=j,\\[1mm]
D_{x_i}\partial_h G(x_{p,s},x_{p,j})
 ~&\mbox{for}~m= j,~s\neq j,\\[1mm]
D^2_{x_ix_h} G(x_{p,m},x_{p,j})
~&\mbox{for}~m\neq j,~s=j,\\[1mm]
0 ~&\mbox{for}~s,m\neq j.
\end{cases}
\end{equation}
where $G(x,y)$ and $R(x)$ are the Green and Robin function, respectively (see \eqref{Greenfunction}, \eqref{Hfuction}, \eqref{Robinf}), $\partial_iG(y,x):=\frac{\partial G(y,x)}{\partial y_i}$ and
$D_{x_i}G(y,x):=\frac{\partial G(y,x)}{\partial x_i}$.
\end{Prop}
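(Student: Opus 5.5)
Both forms are bilinear, and by Lemma \ref{llas} they do not depend on $\theta$ as long as both arguments are biharmonic in $B_d(x_{p,j})\setminus\{x_{p,j}\}$. I will use this in two regimes. When an argument has its pole away from $x_{p,j}$, I let $\theta\to 0$. When both arguments are singular at $x_{p,j}$, I split off the fundamental solution $\Gamma(x)=-\frac{1}{8\pi^2}\ln|x-x_{p,j}|$, writing $G(x_{p,j},x)=\Gamma(x)-H(x_{p,j},x)$, and expand by bilinearity. Derivatives in the pole variable are handled through $\partial_h G(x_{p,j},x)=\partial_{y_h}\Gamma-\partial_hH$, where $\partial_{y_h}\Gamma = -\partial_{x_h}\Gamma$ is a dipole.

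\emph{Vanishing cases.} Suppose both poles differ from $x_{p,j}$. Then both functions are biharmonic and smooth in $B_d(x_{p,j})$, so the boundary integrand is bounded and the surface measure is $O(\theta^3)$; letting $\theta\to0$ gives $0$. More precisely, the interior identities used in the proof of Lemma \ref{llas} show the form vanishes exactly. Now suppose exactly one pole is at $x_{p,j}$, and call its function $u$ and the smooth one $v$. Applying the identity from the proof of Lemma \ref{llas} on $B_\theta\setminus B_\epsilon$ and letting $\epsilon\to0$ reduces the form to the local contribution of the singularity of $u$ paired with the Taylor data of $v$ at $x_{p,j}$. The pairings are as follows.
\begin{enumerate}
\item[(i)] For $P_j$, the monopole $\Gamma$ (where $\Delta\Gamma\sim |x|^{-2}$ and $\partial_\nu\Delta\Gamma\sim|x|^{-3}$) picks up the scaling quantity of $v$. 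Since $x\cdot\nabla v$ vanishes at the centre, the result is $0$ for $G(x_{p,s},\cdot)$. For the dipole $\partial_hG(x_{p,j},\cdot)$ paired with $G(x_{p,s},\cdot)$, the form picks up $\partial_h v(x_{p,j})$ with a sign, giving $-D_{x_h}G(x_{p,s},x_{p,j})$.
\item[(ii)] For $Q_j$, the monopole picks up $\partial_iv(x_{p,j})$, giving $D_{x_i}G(x_{p,m},x_{p,j})$. The dipole picks up the second derivative, giving the entries of \eqref{aluo41} with $D^2_{x_ix_h}$ or $D_{x_i}\partial_h$.
\item[(iii)] The case $m\neq j$ in \eqref{abb1-1} with $s=j$ reduces to (i) with the roles swapped.
\end{enumerate}
Each constant comes from an explicit surface integral over $S^3$. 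I will use $|S^3|=2\pi^2$, so $\theta^3\cdot 2\pi^2\cdot\frac{1}{8\pi^2}\theta^{-3}$-type products give $\frac14$, and the boundary integrand yields the stated normalizations.

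\emph{Both poles at $x_{p,j}$.} I expand $P_j(\Gamma-H,\Gamma-H)=P_j(\Gamma,\Gamma)-2P_j(\Gamma,H)+P_j(H,H)$.
\begin{itemize}
\item[(a)] $P_j(H,H)=0$, since $H$ is smooth.
\item[(b)] $P_j(\Gamma,H)=0$ by case (i).
\item[(c)] $P_j(\Gamma,\Gamma)$ is computed directly. We have $\Delta\Gamma=-\frac{1}{4\pi^2}|x|^{-2}$ and $\partial_\nu\Gamma=-\frac{1}{8\pi^2}\theta^{-1}$, with $x\cdot\nabla\Gamma=-\frac1{8\pi^2}$ constant, so its normal derivative is $0$. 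The integrand is then $-\theta(\Delta\Gamma)^2-2\theta\,\partial_\nu\Delta\Gamma\,\partial_\nu\Gamma$, and integrating over $\partial B_\theta$ gives the value $\frac{1}{8\pi^2}$ (up to the sign convention, which I will check).
\end{itemize}
For $Q_j(G,G)$ at the same pole, $Q_j(\Gamma,\Gamma)=0$ by odd symmetry and $Q_j(H,H)=0$. The cross term gives $-2Q_j(\Gamma,H)$, which equals a multiple of $\partial_i H(x_{p,j},x)|_{x=x_{p,j}}$. By the symmetry $H(x,y)=H(y,x)$ we have $\partial_iR=2\,\partial_{x_i}H(x,y)|_{x=y}$, and this produces $\partial_i R(x_{p,j})$. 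Similarly, the dipole cases give half derivatives of $R$, namely $-\frac12\partial_hR$ and $\frac12\partial^2_{ih}R$. For the latter, the term $Q_j(\partial_h\Gamma,\Gamma)$ also vanishes by parity, and the second derivatives of $H$ combine into $\frac12\partial_{ih}^2R$.

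\emph{Main obstacle.} The hard part is the bookkeeping in the dipole cases. I must correctly compute the residues of $\partial_{y_h}\Gamma$ against the linear and quadratic Taylor terms of the smooth part, including sign conventions between $\partial_h$ (pole variable) and $D_{x_h}$. I will isolate these as two model computations: the pairing of $\Gamma$ and of $\partial_h\Gamma$ against polynomials of degree at most $2$. These are done once, using $\int_{S^3}\nu_i\nu_h=\frac{\pi^2}{2}\delta_{ih}$, and then reused in every case.
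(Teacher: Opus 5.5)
Your plan follows the paper's own route: split off the fundamental solution by bilinearity, use the $\theta$-independence from Lemma \ref{llas}, shrink the sphere using Taylor expansions of the smooth factor and $\int_{\partial B_\theta}\xi_h\xi_\ell\,d\sigma=\frac{\pi^2}{2}\theta^3\delta_{h\ell}$, and convert to derivatives of $R$ via $H(x,y)=H(y,x)$. However, two steps fail as written.

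\textbf{The sign of the splitting.} By \eqref{Hfuction}, $H(x,y)=G(x,y)+\frac{\ln|x-y|}{8\pi^2}$, so $G(x_{p,j},x)=\Gamma(x)+H(x_{p,j},x)$, not $\Gamma-H$. Suppose you keep $G=\Gamma-H$ and take $R(x)=H(x,x)$ for that $H$. Then your own monopole rule $Q_j(\Gamma,v)=\partial_{x_i}v(x_{p,j})$ gives $-2Q_j(\Gamma,H)=-2D_{x_i}H(x_{p,j},x_{p,j})=-\partial_iR(x_{p,j})$. In the same way you get $+\frac12\partial_hR$ and $-\frac12\partial^2_{ih}R$. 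These are the negatives of the $R$-entries in \eqref{abb1-1}, \eqref{aluo1} and \eqref{aluo41}. With the paper's convention the cross terms enter with a plus sign and the stated values follow. The value $\frac{1}{8\pi^2}$ and the off-diagonal entries are unaffected.

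\textbf{The parity claim for $Q_j(\Gamma,\partial_h\Gamma)$.} You assert that this vanishes by parity, but it does not. Every term of $Q_j$ contains exactly one odd factor ($\nu_i$ or $\partial_{x_i}$). With $\Gamma$ even and the dipole $\partial_h\Gamma:=\frac{x_h-x_{p,j,h}}{8\pi^2|x-x_{p,j}|^2}$ odd, the integrand is therefore even. Parity does work for $Q_j(\Gamma,\Gamma)$, and also for $P_j(\Gamma,\partial_h\Gamma)$, since $P_j$ has no odd factor.

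In fact, taking the terms of \eqref{abd} in order with $u=\Gamma$ and $v=\partial_h\Gamma$, the five surface integrals equal $\frac{\delta_{ih}}{32\pi^2\theta^2}$ times $2,\,2,\,-3,\,-2,\,1$. Only their sum is zero. A correct short argument is by homogeneity: every term is homogeneous, so $Q_j(\Gamma,\partial_h\Gamma)=c_{ih}\theta^{-2}$, and Lemma \ref{llas} forces $c_{ih}=0$. Alternatively, carry out the explicit computation, which is what the paper's ``direct computation'' amounts to.

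The same caution applies in your model pairing of the dipole with the quadratic Taylor data of a smooth biharmonic $v$ in $Q_j$. The terms $\Delta v\,\Delta(\partial_h\Gamma)\,\nu_i$ and $-\Delta v\,\partial_\nu\partial_{x_i}(\partial_h\Gamma)$ contribute $-\frac14\delta_{ih}\Delta v(x_{p,j})$ and $+\frac14\delta_{ih}\Delta v(x_{p,j})$ respectively. These cancel each other; they do not vanish individually by oddness. Only after this cancellation do you obtain exactly $D^2_{x_ix_h}v(x_{p,j})$.

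With these two corrections, your argument coincides with the paper's.
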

\begin{proof}
Here we prove only \eqref{1-1}. Since the computations of \eqref{abb1-1}--\eqref{aluo41} are similar, we put the details in Appendix \ref{s6}.

\vskip 0.1cm

By the bilinearity of $P_{j}(u,v)$ and \eqref{Hfuction}, we have
\begin{equation}\label{eq:P-decomp}
\begin{split}
P_{j}\Big(G(x_{p,j},x),G(x_{p,j},x)\Big)=&
 P_{j}\Big(S(x_{p,j},x),S(x_{p,j},x)\Big)
 +2P_{j}\Big(S(x_{p,j},x),H(x_{p,j},x)\Big)\\&
 +P_{j}\Big(H(x_{p,j},x),H(x_{p,j},x)\Big).
 \end{split}
\end{equation}
Since
\[
S({x_{p,j}},x)=-\frac1{8\pi^2}\ln|x-{x_{p,j}}|,
\]by direct computation,
\begin{align*}
P_j\bigl(S({x_{p,j}},x),S({x_{p,j}},x)\bigr)
&=\int_{\partial B_\theta({x_{p,j}})}
\left[
-\theta (\Delta S)^2
-2\theta \partial_\nu\Delta S\,\partial_\nu S
\right]\,d\sigma \\
&=\int_{\partial B_\theta({x_{p,j}})}
\left[
-\theta\frac{1}{16\pi^4\theta^4}
+\theta\frac{1}{8\pi^4\theta^4}
\right]\,d\sigma \\
&=\frac{1}{16\pi^4\theta^3}|\partial B_\theta({x_{p,j}})|
=\frac{1}{16\pi^4\theta^3}\cdot 2\pi^2\theta^3
=\frac1{8\pi^2}.
\end{align*}
Hence
\begin{equation}
P_j\bigl(S(x_{p,j},x),S(x_{p,j},x)\bigr)=\frac1{8\pi^2}.
\label{eq:PSS-final}
\end{equation}
Now we calculate $P_{j}\Big(S(x_{p,j},x),H(x_{p,j},x)\Big)$.  Since $D_{\nu} H(x_{p,j},x)$   is bounded in $B_d(x_{p,j})$,  we know
\begin{equation}\label{gil27}
P_{j}\Big(S(x_{p,j},x),H(x_{p,j},x)\Big)=
O\Big(\theta\int_{\partial B_\theta(x_{p,j})} \big| D S(x_{p,j},x)\big|\Big)=O\big(\theta \big),
\end{equation}
and
\begin{equation}\label{gil28}
P_{j}\Big(H(x_{p,j},x),H(x_{p,j},x)\Big)=
O\Big(\theta\int_{\partial B_\theta(x_{p,j})} \big| D H(x_{p,j},x)\big|\Big)=O\big(\theta^2 \big).
\end{equation}
Letting $\theta\to0$ in \eqref{eq:P-decomp}, from \eqref{eq:PSS-final} we obtain
\[
P_j\bigl(G(x_{p,j},x),G(x_{p,j},x)\bigr)=\frac1{8\pi^2}.
\]
Let $m\neq j$, since $G(x_{p,m},x)$ and $D_{\nu}G(x_{p,m},x)$ are bounded in $B_d(x_{p,j})$, then we find
\begin{equation}\label{gil29}
\begin{split}
P_{j}\Big(G(x_{p,s},x),G(x_{p,m},x)\Big)=
O\Big(\theta\int_{\partial B_\theta(x_{p,j})} \big| D G(x_{p,s},x)\big|\Big)=O\big(\theta \big).
 \end{split}
\end{equation}
Letting $\theta\rightarrow 0$ in \eqref{gil29} and using the symmetry of $P_{j}\big(u,v\big)$, we deduce that   $$P_{j}\Big(G(x_{p,s},x),G(x_{p,m},x)\Big)=0\,\,~\mbox{for}~s\neq j~\mbox{or}~m\neq j.$$
\end{proof}

\vskip 0.5cm

\section{The asymptotic behavior of the positive solutions}  \label{section3}

In this section, we establish Theorem \ref{th1}.
The main point is to improve
 \eqref{5-8-2} by deriving a second-order expansion of $Z_{p,j}$; this is carried out in  Section \ref{subsection:w}, especially in propositions \ref{prop3-2} and \ref{key-2}. The complete  proof of Theorem \ref{th1} is then given in Section \ref{sub:proofThm11}. Finally in Section \ref{subsection:expansionu} we also obtain some further expansions of $u_{p}$, which will be needed in the proofs of Theorems \ref{th1.1}.

 \subsection{Asymptotics for $Z_{p,j}$}\label{subsection:w}

$\,$  \vskip 0.2cm

Let $Z_{p,j}$ be the rescaled solution as defined in \eqref{defwpj}, from \cite{santra-wei} we know that the following hold:
\begin{Lem}\label{llma}
For any small fixed $\sigma,d>0$, there exist $R_\sigma>1$ and $p_\sigma> 1$ such that
\begin{equation*}
|Z{p,j}| \leq \big(8-\sigma\big)\ln \frac{1}{|y|}+C_\sigma,~~\mbox{for}~j=1,\cdots,k,
\end{equation*}
for some $C_\sigma>0$, provided $R_\sigma\leq |y|\leq \frac{d}{\e_{p,j}}$ and $p\geq p_\sigma$.
\end{Lem}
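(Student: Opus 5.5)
The plan is to derive the estimate from the Green representation of $u_p$, rescaled around $x_{p,j}$. The bound should be read as an upper bound of the form $Z_{p,j}(y)\le -(8-\sigma)\ln|y|+C_\sigma$. The matching lower bound $Z_{p,j}\ge -C\ln|y|-C$ in the same range follows from the same representation with cruder constants.

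\textbf{Step 1: the rescaled representation.} Since $u_p(x)=\int_\Omega G(x,z)(u_p^+)^p(z)\,dz$, we have for $x=x_{p,j}+\e_{p,j}y$
\[
Z_{p,j}(y)=\frac{p}{u_p(x_{p,j})}\int_\Omega\big[G(x,z)-G(x_{p,j},z)\big](u_p^+)^p(z)\,dz .
\]
We split $G=-\frac{1}{8\pi^2}\ln|x-z|+H(x,z)$ as in \eqref{Hfuction}. After the change of variables $z=x_{p,j}+\e_{p,j}w$, and using $\e_{p,j}^{-4}=p\,u_p(x_{p,j})^{p-1}$, this becomes
\[
Z_{p,j}(y)=\frac{1}{8\pi^2}\int_{\Omega_{p,j}}\ln\frac{|w|}{|y-w|}\,f_p(w)\,dw+O(\e_{p,j}|y|),
\qquad f_p(w):=\Big(1+\frac{Z_{p,j}(w)}{p}\Big)_+^{p}.
\]
We have $0\le f_p\le 1$ in $B_{2r}(x_{p,j})$, since $x_{p,j}$ is a local maximum. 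By \eqref{5-8-2}, $f_p\to e^{Z}$ locally uniformly.

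\textbf{Step 2: the mass near the spike and the far part.} From \eqref{1.2}, \eqref{ConvMax} and Theorem A(i), the total rescaled mass near $x_{p,j}$ is controlled: $\int_{B_{d/\e_{p,j}}}f_p\to 64\pi^2$. The contributions of the other spikes $x_{p,m}$, $m\ne j$, and of $\Omega\setminus B_{2r}$ enter through the smooth kernel $G(x,z)-G(x_{p,j},z)$ with $|x-z|\ge r$. Since $p(u_p^+)^p$ has bounded mass, these contributions are $O(1)$. Given $\sigma$, we choose $R_\sigma$ with $\int_{B_{R_\sigma}}e^Z>64\pi^2-\sigma\pi^2$. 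Then, for $p\ge p_\sigma$, we have $\int_{B_{R_\sigma}}f_p\ge 64\pi^2-2\sigma\pi^2$ and $\int_{B_{d/\e_{p,j}}\setminus B_{R_\sigma}}f_p\le 2\sigma\pi^2$.

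\textbf{Step 3: estimating the integral for $2R_\sigma\le|y|\le d/\e_{p,j}$.} We split the $w$-integral into three regions.
\begin{itemize}
\item[(a)] On $|w|\le R_\sigma$ we have $\ln\frac{|w|}{|y-w|}\le -\ln|y|+C$. This region contributes at most $-(8-\tfrac{\sigma}{2})\ln|y|+C_\sigma$.
\item[(b)] On $R_\sigma\le|w|$ with $|y-w|\ge|y|/2$, the kernel is at most $\ln(2|w|/|y|)\le C$ when $|w|\le 4|y|$. The part with $|w|>4|y|$ only adds $O(\sigma\ln|y|)$ because of the small mass there, with the sign handled by choosing $\sigma$ small. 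Altogether, (b) contributes at most $\frac{\sigma}{4}\ln|y|+C$.
\item[(c)] The delicate region is $|y-w|<|y|/2$, where the kernel $\ln\frac{|w|}{|y-w|}$ is positive and unbounded.
\end{itemize}

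\textbf{The main obstacle: region (c).} Smallness of the mass alone does not control the log-singular kernel there. The plan is to use the standard pointwise bound from the blow-up analysis of \cite{santra-wei}, namely
\[
p\,(u_p^+)^{p-1}(x)\le \frac{C}{|x-x_{p,j}|^4}\quad\text{in }B_{2r}(x_{p,j}).
\]
This comes from the finiteness of the blow-up set via a contradiction argument: rescaling at a point where it fails would produce another bubble. In rescaled variables it gives $f_p(w)\le C|w|^{-4}\le C'|y|^{-4}$ on region (c), hence
\[
\int_{|y-w|<|y|/2}\ln\frac{|w|}{|y-w|}\,f_p(w)\,dw\le \frac{C}{|y|^4}\int_{B_{|y|/2}}\ln\frac{3|y|}{2|v|}\,dv\le C .
\]
Collecting (a), (b), (c) and the $O(\e_{p,j}|y|)=O(d)$ error yields $Z_{p,j}(y)\le-(8-\sigma)\ln|y|+C_\sigma$ in the stated range. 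Because $\ln|y|\to\infty$ here, enlarging $R_\sigma$ lets us absorb all $\sigma$-losses into the constant.
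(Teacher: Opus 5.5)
The paper does not actually prove this lemma. It is quoted from \cite{santra-wei}, and the paper's proof of \eqref{abs} then relies on it. Your Green's-representation argument is the standard route to this kind of decay estimate and is correct in outline. You are also right that the statement must be read as the one-sided bound $Z_{p,j}(y)\le-(8-\sigma)\ln|y|+C_\sigma$: with the absolute value, the right-hand side is negative for large $|y|$. Compared with the citation, your route makes the lemma self-contained given Theorem A, but a few points need attention.

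First, because of the circularity just noted, the mass convergence in Step 2 has to come from Theorem A(i), as you indicate, rather than from \eqref{abs}. One line suffices. Integrate $\Delta^2(pu_p)=p(u_p^+)^p$ over $B_d(x_{\infty,j})$ and pass to the limit in $\int_{\partial B_d(x_{\infty,j})}\partial_\nu\Delta(pu_p)\,d\sigma$ using \eqref{11-14-03N}, upgraded to $C^3$ near that sphere by elliptic regularity. Since $\int_{\partial B_d(x_{\infty,j})}\partial_\nu\Delta_x G(x,x_{\infty,m})\,d\sigma=\delta_{jm}$, the limit is $64\pi^2\sqrt e$; then divide by $u_p(x_{p,j})\to\sqrt e$.

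Second, your handling of $|w|>4|y|$ in region (b) is wrong as written. There $\ln(2|w|/|y|)$ can be of order $|\ln\e_{p,j}|\sim p$, and small mass does not turn this into $O(\sigma\ln|y|)$. The fix is that $|y-w|\ge\frac34|w|$ on that set, so the kernel is at most $\ln\frac43$ and all of (b) contributes $O(1)$.

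Third, the pointwise bound $p\,|x-x_{p,j}|^4(u_p^+)^{p-1}(x)\le C$ that you import for region (c) is an extra input you do not need. You already have $0\le f_p\le1$ on region (c). Taking the mass bound of Step 2 on $B_{2d/\e_{p,j}}(0)$, you also have $\int_{\{|y-w|<|y|/2\}}f_p\,dw\le m:=2\sigma\pi^2$. On that region $\ln\frac{|w|}{|y-w|}\le\ln\frac{3|y|}{2|y-w|}$, which is positive and radially decreasing about $y$. By the bathtub principle, the integral over (c) is therefore at most $\int_{B_\rho(y)}\ln\frac{3|y|}{2|y-w|}\,dw$ with $|B_\rho|=m$, which equals $m\ln|y|+m\big(\ln\tfrac32+\ln\tfrac1\rho+\tfrac14\big)$. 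After the factor $\frac1{8\pi^2}$ this is $\frac{\sigma}{4}\ln|y|+C_\sigma$, absorbed exactly like your other $\sigma$-losses.

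With these changes the proof runs on Theorem A, \eqref{1.2} and the local maximality of $x_{p,j}$ alone. Your version additionally rests on the pointwise estimate from the blow-up analysis, whose validity for this problem you would have to take from the literature.
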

\begin{Lem}
Let $d>0$ be fixed and sufficiently small. Then, for each $j=1,\cdots,k$, one has
\begin{equation}\label{abs}
\lim_{p\to +\infty}\int_{\frac{d}{\varepsilon_{p,j}}(0)}\Big[\Big(1+\frac{ Z_{p,j}(z)}{p}
  \Big)^+\Big]^pdz
   = 64\pi^2.
\end{equation}
Consequently, if we define
\begin{equation}\label{def_Cpj}
C_{p,j}:= \displaystyle\int_{B_d(x_{p,j})}
(u^+_{p})^{p}(y)dy,
\end{equation}
then
\begin{equation}\label{luoluo1}
C_{p,j}=\frac{u_{p}(x_{p,j})}{p}\int_{B_{\frac{d}{\varepsilon_{p,j}}}(0)}\Big[\Big(1+\frac{ Z_{p,j}(z)}{p}
  \Big)^+\Big]^{p}=\frac{1}{p} \Big(64\pi^2 \sqrt{e}+o(1)\Big).
\end{equation}
\end{Lem}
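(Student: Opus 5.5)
The plan is to prove \eqref{abs} first and then deduce \eqref{luoluo1} by a change of variables. For the first claim I would use the local convergence \eqref{5-8-2} together with the uniform decay bound of Lemma \ref{llma}, so that dominated convergence applies on the whole ball $B_{d/\e_{p,j}}(0)$.

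For the change of variables, put $y=x_{p,j}+\e_{p,j}z$. Then by \eqref{defwpj}
\[
u_p(y)=u_p(x_{p,j})\Big(1+\frac{Z_{p,j}(z)}{p}\Big),
\]
so that
\[
(u_p^+)^p(y)=u_p(x_{p,j})^p\Big[\Big(1+\frac{Z_{p,j}(z)}{p}\Big)^+\Big]^p .
\]
Since $dy=\e_{p,j}^4dz$ and $\e_{p,j}^4=\big(p\,u_p(x_{p,j})^{p-1}\big)^{-1}$, the prefactor is $u_p(x_{p,j})^p\e_{p,j}^4=u_p(x_{p,j})/p$. This gives the first equality in \eqref{luoluo1} exactly. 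The second equality then follows from \eqref{ConvMax} and \eqref{abs}.

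To prove \eqref{abs}, split $B_{d/\e_{p,j}}(0)$ into $B_R(0)$ and the annulus $R\le|z|\le d/\e_{p,j}$.

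On $B_R(0)$, \eqref{5-8-2} gives $Z_{p,j}\to Z$ uniformly. Hence $\big(1+Z_{p,j}/p\big)^p\to e^{Z}$ uniformly, using $p\ln(1+t/p)=t+O(t^2/p)$ for bounded $t$. The integral over $B_R(0)$ therefore tends to $\int_{B_R}e^Z$.

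On the annulus, use $(1+t/p)^+\le e^{t/p}$, so the integrand is at most $e^{Z_{p,j}}$. By Lemma \ref{llma} (read as an upper bound $Z_{p,j}\le-(8-\sigma)\ln|z|+C_\sigma$, which is the meaningful direction since $Z_{p,j}\le 0$ by maximality of $x_{p,j}$), we get
\[
e^{Z_{p,j}}\le C|z|^{-(8-\sigma)}.
\]
This is integrable on $\{|z|\ge R\}$ in $\R^4$ once $\sigma<4$, and its tail tends to $0$ as $R\to\infty$ uniformly in $p$.

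Combining the two regions and letting $R\to\infty$ yields $\int_{\R^4}e^Z=64\pi^2$, which is \eqref{abs}.

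The only delicate point is the uniform tail bound near the edge $|z|\sim d/\e_{p,j}$, where $u_p$ could in principle fail to be small. Lemma \ref{llma} is stated exactly up to $d/\e_{p,j}$, so it covers this region. I would also note that the positive part causes no issue, because the bound $(1+t/p)^+\le e^{t/p}$ holds for all real $t$.
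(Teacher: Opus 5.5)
Your proposal is correct and follows the same route as the paper. The paper also derives \eqref{abs} from the local convergence \eqref{5-8-2}, the decay bound of Lemma \ref{llma} (which you rightly read as an upper bound on $Z_{p,j}$) and dominated convergence, citing Santra--Wei, and then obtains \eqref{luoluo1} by the change of variables $y=x_{p,j}+\varepsilon_{p,j}z$ together with \eqref{ConvMax}; you simply make explicit the $B_R$/annulus splitting and the bound $[(1+t/p)^+]^p\le e^{t}$ that the paper leaves implicit.
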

\begin{proof} The limit in \eqref{abs} was established in \cite{santra-wei}. It follows from the convergence in \eqref{5-8-2}, Lemma \ref{llma}, and the dominated convergence theorem. By the definition in \eqref{defwpj}, together with \eqref{ConvMax} and
\eqref{abs}, we obtain
\begin{equation*}
C_{p,j}=\frac{u_p(x_{p,j})}{p}\int_{B_{\frac{d}{\varepsilon_{p,j}}}(0)}\Big[\Big(1+\frac{ Z_{p,j}(z)}{p}
  \Big)^+\Big]^p
   =\frac{1}{p} \Big(64\pi^2 \sqrt{e}+o(1)\Big).
\end{equation*}
\end{proof}

 Next, we obtain an expansion of $u_p$ that will be used to refine the asymptotic expansion of $Z_{p,j}$. This will also be used in the proof of the non-degeneracy result; see the proofs of propositions  \ref{dprop-luo1} and \ref{prop-gl}.
At the end of this section, we shall further improve this expansion; see  Lemma \ref{prop:expansionupwithdelta}.

\begin{Lem}\label{prop3-1}
Let $d>0$ be fixed and sufficiently small.
\begin{equation}\label{luo-1}
u_p(x)= \sum^k_{j=1}C_{p,j}G(x_{p,j},x)+
o\Big(\sum^k_{j=1}\frac{\varepsilon_{p,j}}{p}\Big)\,\,
~\mbox{in}~C^1\Big(\Omega\backslash \displaystyle\bigcup^k_{j=1} B_{2d}(x_{p,j})\Big),
\end{equation}
where $ C_{p,j}$ is as in \eqref{def_Cpj}.
\end{Lem}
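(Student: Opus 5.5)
We start from the Green representation of $u_p$. Since $\Delta^2u_p=(u_p^+)^p$ in $\Omega$ with Dirichlet conditions,
\[
u_p(x)=\int_\Omega G(y,x)(u_p^+)^p(y)\,dy .
\]
We split $\Omega=\bigcup_{j=1}^k B_d(x_{p,j})\cup\big(\Omega\setminus\bigcup_j B_d(x_{p,j})\big)$. For $x\in\Omega\setminus\bigcup_j B_{2d}(x_{p,j})$ we write
\[
u_p(x)=\sum_{j=1}^k C_{p,j}G(x_{p,j},x)+\sum_{j=1}^k\int_{B_d(x_{p,j})}\big(G(y,x)-G(x_{p,j},x)\big)(u_p^+)^p(y)\,dy+\int_{\Omega\setminus\cup_j B_d(x_{p,j})}G(y,x)(u_p^+)^p(y)\,dy .
\]
The same splitting applies to $\nabla_x u_p$ with $\nabla_xG$. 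The $C^1$ statement then reduces to showing that both remainder terms are $o\big(\sum_j\varepsilon_{p,j}/p\big)$, uniformly in $x$.

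\textbf{Outer region.} Away from the concentration points, Theorem A(i) gives $pu_p\to 64\pi^2\sqrt e\sum_jG(\cdot,x_{\infty,j})$, which is bounded there. Hence $u_p\le C/p$, and $(u_p^+)^p\le (C/p)^p$. Both $G(y,x)$ and $\nabla_xG(y,x)$ are integrable in $y$, the latter because the singularity is only logarithmic or $|x-y|^{-1}$ in $\R^4$. So this term is $O\big((C/p)^p\big)$, which is super-exponentially small compared to $\varepsilon_{p,j}/p$. Here we use $\varepsilon_{p,j}\approx e^{-p/8}$, which follows from $u_p(x_{p,j})\to\sqrt e$ and the definition $\varepsilon_{p,j}=(p\,u_p(x_{p,j})^{p-1})^{-1/4}$.

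\textbf{Inner regions.} For $|x-x_{p,j}|\ge 2d$ and $y\in B_d(x_{p,j})$, the Green function is smooth in $y$, so
\[
G(y,x)-G(x_{p,j},x)=\nabla_yG(x_{p,j},x)\cdot(y-x_{p,j})+O(|y-x_{p,j}|^2),
\]
and likewise for $\nabla_xG$. We substitute $y=x_{p,j}+\varepsilon_{p,j}z$ and use $(u_p^+)^p(y)=u_p(x_{p,j})^p\big[(1+Z_{p,j}(z)/p)^+\big]^p$. Since $\varepsilon_{p,j}^4u_p(x_{p,j})^p=u_p(x_{p,j})/p$, the linear term equals
\[
\frac{\varepsilon_{p,j}u_p(x_{p,j})}{p}\,\nabla_yG(x_{p,j},x)\cdot\int_{B_{d/\varepsilon_{p,j}}}z\big[(1+Z_{p,j}/p)^+\big]^p\,dz .
\]
The quadratic term is
\[
O\Big(\frac{\varepsilon_{p,j}^2}{p}\int_{B_{d/\varepsilon_{p,j}}}|z|^2\big[(1+Z_{p,j}/p)^+\big]^p\,dz\Big).
\]

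\textbf{Main obstacle.} We must show that the first moment satisfies
\[
\int_{B_{d/\varepsilon_{p,j}}} z\big[(1+Z_{p,j}/p)^+\big]^p\,dz\to\int_{\R^4}z\,e^{Z}\,dz=0,
\]
where the limit vanishes because $Z$ is radial, and that the second moment is $O(1)$, or at least $o(1/\varepsilon_{p,j})$. Pointwise convergence comes from \eqref{5-8-2}. For domination we use Lemma \ref{llma}: taking $\sigma$ small, for $R_\sigma\le|z|\le d/\varepsilon_{p,j}$,
\[
\big[(1+Z_{p,j}/p)^+\big]^p\le e^{Z_{p,j}}\le C|z|^{-(8-\sigma)},
\]
using $(1+t/p)^p\le e^t$. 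This bound is integrable against $|z|$ and $|z|^2$ in $\R^4$, since $9-\sigma>4$ and $10-\sigma>4$ after accounting for the volume factor $|z|^3$. Dominated convergence then gives the first moment tending to $0$ and the second moment bounded. Consequently the inner remainder is $o(\varepsilon_{p,j}/p)+O(\varepsilon_{p,j}^2/p)=o(\varepsilon_{p,j}/p)$.

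The same argument, with $\nabla_xG$ and its $y$-derivatives bounded uniformly for $|x-x_{p,j}|\ge 2d$, gives the $C^1$ estimate. Summing over $j$ yields \eqref{luo-1}. The delicate point is that Lemma \ref{llma} is stated as an upper bound on $|Z_{p,j}|$; only the upper bound $Z_{p,j}\le-(8-\sigma)\ln|y|+C_\sigma$ is needed. I would verify that the sign convention in Lemma \ref{llma} indeed provides this decay.
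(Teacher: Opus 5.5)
Your proposal is correct and follows the paper's proof essentially step by step: the Green representation, the super-exponentially small outer contribution $O(C^p/p^p)$, the first-order Taylor expansion of $G(\cdot,x)$ around $x_{p,j}$, and dominated convergence via Lemma \ref{llma}, which makes the first moment vanish by radial symmetry of $e^{Z}$ and keeps the second moment bounded. Your remark that only the one-sided bound $Z_{p,j}\le(8-\sigma)\ln\frac{1}{|y|}+C_\sigma$ is needed is right, since the absolute value in the statement of Lemma \ref{llma} cannot be meant literally; one small slip is that the integrability conditions should read $7-\sigma>4$ and $6-\sigma>4$ (not $9-\sigma$ and $10-\sigma$), and these hold for $\sigma<2$.
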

\begin{proof}
For $x\in \Omega\backslash  \bigcup^k_{j=1} B_{2d}(x_{p,j})$, we obtain
\begin{equation}\label{5-8-11}
u_p(x)= \int_{\Omega}G(y,x)
(u^+_{p})^{p}(y) dy=\sum^k_{j=1}
 \int_{B_d(x_{p,j})}G(y,x)
(u^+_{p})^{p}(y)dy+\int_{\Omega\backslash \bigcup^k_{j=1} B_{d}(x_{p,j})}G(y,x)
(u^+_{p})^{p}(y)dy.
\end{equation}
By \eqref{11-14-03N} we have
\begin{equation}\label{5-8-12}
\begin{split}
 \int_{\Omega\backslash  \bigcup^k_{j=1} B_{d}(x_{p,j})}G(y,x)
(u^+_{p})^{p}(y)dy
=O\Big(\frac{C^p}{p^p}\Big)~~\mbox{uniformly for}~ x\in \Omega\backslash  \bigcup^k_{j=1} B_{2d}(x_{p,j}).
\end{split}\end{equation}
Moreover, it holds
\begin{equation}\label{5-8-13}
\begin{split}
 \int_{B_d(x_{p,j})}G(y,x)
(u^+_{p})^{p}(y)dy=  G(x_{p,j},x)\int_{B_d(x_{p,j})}
(u^+_{p})^{p}(y)dy+ \int_{B_d(x_{p,j})}\big(G(y,x)-G(x_{p,j},x)\big)
(u^+_{p})^{p}(y)dy,
\end{split}
\end{equation}
where by Taylor's expansion, it follows
\begin{equation}\label{5-8-14}
\begin{split}
\int_{B_d(x_{p,j})}&\Big(G(y,x)-G(x_{p,j},x)\Big) (u^+_{p})^{p}(y)dy
\\=&\frac{\varepsilon_{p,j} u_{p}(x_{p,j})}{p}
\int_{B_{\frac{d}{\varepsilon_{p,j}}}(0)}
\big\langle\nabla G(x_{p,j},x),z\big\rangle\Big[\Big(1+\frac{ Z_{p,j}(z)}{p}
  \Big)^+\Big]^pdz\\&
+O\left(\frac{\varepsilon^{2}_{p,j} }{p}
\int_{B_{\frac{d}{\varepsilon_{p,j}}}(0)}
\big| z\big|^{2}\cdot \Big[\Big(1+\frac{ Z_{p,j}(z)}{p}
  \Big)^+\Big]^pdz\right).
\end{split}
\end{equation}
By Lemma \ref{llma}, the uniform boundedness of $\big|\nabla G(x_{p,j},x)\big|$  in
$\Omega\backslash  \bigcup^k_{j=1} B_{2d}(x_{p,j})$ and the dominated convergence theorem,  we obtain
\begin{equation}\label{d5-8-14}
\begin{split}
\lim_{p\rightarrow \infty}&\int_{B_{\frac{d}{\varepsilon_{p,j}}}(0)}
\big\langle\nabla G(x_{p,j},x),z\big\rangle\Big[\Big(1+\frac{ Z_{p,j}(z)}{p}
  \Big)^+\Big]^pdz\\
=&\int_{\R^2}
\big\langle\nabla G(x_{\infty,j},x),z\big\rangle e^{Z(z)}dz=0, ~~\mbox{uniformly in $\Omega\backslash  \bigcup^k_{j=1} B_{2d}(x_{p,j})$},
\end{split}
\end{equation}
and
\begin{equation}\label{f5-8-14}
\lim_{p\rightarrow \infty}
\int_{B_{\frac{d}{\varepsilon_{p,j}}}(0)}
\big| z\big|^{2}\cdot \Big[\Big(1+\frac{ Z_{p,j}(z)}{p}
  \Big)^+\Big]^pdz
=\int_{\R^2} |z|^{2}  e^{Z(z)}dz.
\end{equation}
Combining \eqref{5-8-14}, \eqref{d5-8-14} and \eqref{f5-8-14}, we get
\begin{equation}\label{g5-8-14}
\begin{split}
\int_{B_d(x_{p,j})}&\Big(G(y,x)-G(x_{p,j},x)\Big) (u^+_{p})^{p}(y)dy=
o\Big(\frac{\varepsilon_{p,j}}{p}\Big) ~\mbox{uniformly in}~ \Omega\backslash  \bigcup^k_{j=1} B_{2d}(x_{p,j}).
\end{split}
\end{equation}
 Thus, by \eqref{5-8-13} and \eqref{g5-8-14}, we have
\begin{equation}\label{5-8-15}
\begin{split}
 \int_{B_d(x_{p,j})}G(y,x)
(u^+_{p})^{p}(y)dy=  G(x_{p,j},x)\int_{B_d(x_{p,j})}
(u^+_{p})^{p}(y)dy+
o\Big(\frac{\varepsilon_{p,j}}{p}\Big) ~\mbox{uniformly in}~ \Omega\backslash  \bigcup^k_{j=1} B_{2d}(x_{p,j}).
\end{split}
\end{equation}
Therefore, by \eqref{5-8-11}, \eqref{5-8-12} and \eqref{5-8-15}, we obtain
\begin{equation*}
u_p(x)= \sum^k_{j=1}C_{p,j}G(x_{p,j},x)+
o\Big(\sum^k_{j=1}\frac{\varepsilon_{p,j}}{p}\Big)\,\,
~\mbox{uniformly in}~ \Omega\backslash \displaystyle\bigcup^k_{j=1} B_{2d}(x_{p,j}).
\end{equation*}
By the same argument, we also get
\begin{equation*}
\frac{\partial u_p(x)}{\partial x_i}= \sum^k_{j=1}\Big( \int_{B_d(x_{p,j})}
(u^+_{p})^{p}(y)dy\Big) D_{x_i} G(x_{p,j},x)+
o\Big(\frac{\varepsilon_{p}}{p}\Big)\,\,~\mbox{uniformly in}~\Omega\backslash \displaystyle\bigcup^k_{j=1} B_{2d}(x_{p,j}),
\end{equation*}
where ${\varepsilon}_{p}:=\max\big\{\varepsilon_{p,1},
\cdots,\varepsilon_{p,k}\big\}$.
\end{proof}
We define
\begin{equation}\label{dsa}
\eta _{p,j}:=p\big(Z_{p,j}-Z\big),
\end{equation}
where $Z_{p,j}$ is the rescaled function in \eqref{defwpj} and $Z$ is its limit function introduced in \eqref{Z(x)}.

We shall prove the following estimate:
\begin{Prop}\label{key-1}
Let $\eta _{p,j}$ be given by \eqref{dsa}.  Then, for any fixed sufficiently small $d_0>0$ and fixed $\tau\in (0,1)$, there exists $C>0$ such that
\begin{equation}\label{lpy1}
| \eta _{p,j}(x)|\leq C(1+|x| )^\tau~\mbox{in}~ B_{\frac{d_0}{\e_{p,j}}}(0).
\end{equation}
\end{Prop}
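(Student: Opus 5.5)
We will argue as the Remark after Theorem~\ref{th1} indicates: instead of the comparison/ODE argument of \cite{Grossi-Ianni-Luo-Yan}, we write $\eta_{p,j}$ through the Green representation of $u_p$ and estimate the resulting integrals directly, in the spirit of \cite{santra-wei}.

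\textbf{Step 1: an integral identity for $\eta_{p,j}$.} Set $f_p(z):=\big[\big(1+\frac{Z_{p,j}(z)}{p}\big)^+\big]^p$ on $\Omega_{p,j}$. From $\Delta^2 u_p=(u_p^+)^p$ and the definition of $\varepsilon_{p,j}$, we have $\Delta^2 Z_{p,j}=f_p$. Writing $u_p$ through $G$ in $\Omega$ and rescaling, we obtain, for $|x|\le d_0/\varepsilon_{p,j}$,
\[
Z_{p,j}(x)=\int_{\Omega_{p,j}}\Big(-\frac{1}{8\pi^2}\ln\frac{|x-z|}{|z|}\Big)f_p(z)\,dz+\Phi_p(x),
\]
where $\Phi_p$ collects the regular part $H$, the contributions of the other spikes, and the normalization $Z_{p,j}(0)=0$. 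The log kernel is rescaled with the point $z$ subtracted; all constant terms cancel because $Z_{p,j}(0)=0$. By Lemma~\ref{prop3-1}, the smoothness of $H$, and $|x|\varepsilon_{p,j}\le d_0$, one gets $|p\,\Phi_p(x)|\le C(1+\varepsilon_{p,j}|x|)\le C$, since the prefactor $p/u_p(x_{p,j})$ times $C_{p,m}=O(1/p)$ is bounded. Likewise $Z(x)=\int_{\R^4}\big(-\frac{1}{8\pi^2}\ln\frac{|x-z|}{|z|}\big)e^{Z(z)}dz$. Subtracting and multiplying by $p$ gives
\[
\eta_{p,j}(x)=\frac{1}{8\pi^2}\int_{\R^4}\ln\frac{|z|}{|x-z|}\,p\big(f_p\chi_{\Omega_{p,j}}-e^Z\big)(z)\,dz+O(1).
\]

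\textbf{Step 2: bootstrap on a weighted norm.} Let $M_p:=\sup_{B_{d_0/\varepsilon_{p,j}}}|\eta_{p,j}|(1+|x|)^{-\tau}$, and argue by contradiction, assuming $M_p\to\infty$. Inside the region where $|Z_{p,j}|\le \sqrt p$, say, Taylor expansion of $\ln(1+t/p)$ gives
\[
p(f_p-e^Z)=e^{Z}\Big(\eta_{p,j}-\tfrac{Z_{p,j}^2}{2}+O\big(\tfrac{|Z_{p,j}|^3+|\eta_{p,j}|^2}{p}\big)\Big)\,e^{O(\cdot)}.
\]
By Lemma~\ref{llma}, outside this region $f_p\le C(1+|z|)^{-8+\sigma}$ and $e^Z\le C(1+|z|)^{-4}\cdot(1+|z|)^{-4}$. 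Since $e^Z\sim|z|^{-4}$ up to constants, we have $\int (1+|z|)^{\tau}|\ln|z||^2 e^{Z}<\infty$, so the source is bounded in a weighted sense by $C(1+M_p)(1+|z|)^{\tau-8+\sigma}$ plus a remainder controlled by $M_p^2/p$. The log-kernel estimate
\[
\int \Big|\ln\frac{|z|}{|x-z|}\Big|(1+|z|)^{-8+\tau+\sigma}dz\le C\ln(2+|x|)
\]
then gives $|\eta_{p,j}(x)|\le C(1+M_p\,o(1)+\dots)$ only if we separate the linear part. For that, normalize $\tilde\eta_p:=\eta_{p,j}/M_p$. Then $|\tilde\eta_p|\le(1+|x|)^\tau$ and $\Delta^2\tilde\eta_p=e^Z\tilde\eta_p+o(1)$ weighted, so by elliptic estimates $\tilde\eta_p\to\tilde\eta$ in $C^4_{loc}$, with $\Delta^2\tilde\eta=e^Z\tilde\eta$ and $|\tilde\eta|\le(1+|x|)^\tau$. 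Lemma~\ref{llm} (whose finite-energy hypothesis follows from the integral representation and the growth bound) gives $\tilde\eta=\sum_i a_i\partial_iZ+b(4+x\cdot\nabla Z)$. Now use the normalizations: $\eta_{p,j}(0)=0$ and $\nabla\eta_{p,j}(0)=0$, since $x_{p,j}$ is a maximum point and $Z$ is radial with $\nabla Z(0)=0$. Because $4+x\cdot\nabla Z(0)=4\neq0$, we get $b=0$. The gradient condition forces $a_i=0$, but $\partial_iZ$ is singular at the origin, so we use instead the bound $|\tilde\eta|\le(1+|x|)^\tau$ and the representation to exclude it. Hence $\tilde\eta\equiv0$ locally. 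The integral identity, with the kernel bound, then upgrades this to $\sup|\tilde\eta_p|(1+|x|)^{-\tau}\to0$, which contradicts $\sup=1$.

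\textbf{Main obstacle.} The main obstacle is the last upgrade: showing that the weighted supremum of $\tilde\eta_p$ cannot escape to $|x|\sim d_0/\varepsilon_{p,j}$. We will handle it by splitting the integral into $|z|\le R$, where local convergence applies, and $|z|\ge R$, where the tail $(1+|z|)^{-8+\tau+\sigma}$ is small. For $|x|$ large, the difference $\ln|z|-\ln|x-z|$ is bounded by $C\ln(2+|x|)\ll(1+|x|)^\tau$, which is why $\tau>0$ is needed. Controlling the nonlinear remainder where $Z_{p,j}$ is of size $\ln p$, using Lemma~\ref{llma}, is the other delicate point.
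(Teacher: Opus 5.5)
Your outline follows the paper's proof. You use the Green representation with $K(x,z)=-\frac{1}{8\pi^2}\ln\frac{|x-z|}{|z|}$ and a blow-up argument for $\sup_{B_{d_0/\varepsilon_{p,j}}}|\eta_{p,j}|(1+|x|)^{-\tau}$. The limit is identified via Lemma~\ref{llm} and $\eta_{p,j}(0)=0$, $\nabla\eta_{p,j}(0)=0$. The kernel bound $\int|\ln\frac{|z|}{|x-z|}|(1+|z|)^{-8+\tau+\sigma}dz\le C\ln(2+|x|)$ stops the weighted mass from escaping. Your order (local limit is zero, then globalize by splitting at $|z|=R$) is an equivalent rearrangement of the paper's ``the maximizing point stays bounded, so the limit is nonzero''.

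\textbf{The gap: the nonlinearity.} Your Taylor expansion leaves a remainder $O(|\eta_{p,j}|^2/p)$, which you bound by $M_p^2/p$.
\begin{itemize}
\item After dividing by $M_p$ this is of size $M_p/p$. You never prove $M_p=o(p)$; the a priori bound $|Z_{p,j}-Z|\le C\ln(2+|x|)$ only gives $M_p=O(p)$.
\item On $\{|Z_{p,j}|\le\sqrt p\}$ the term $Z_{p,j}^2/(2p)$ can be of order one, so your factor $e^{O(\cdot)}$ is not $1+o(1)$.
\end{itemize}
Locally this is harmless, since there $\eta_{p,j}/p=Z_{p,j}-Z\to0$. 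But the global weighted bound on the normalized source is not established, and it drives both the no-escape step and your final upgrade.

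What you need is a bound linear in $\eta_{p,j}$, with a coefficient decaying like $(1+|z|)^{-8+\sigma}$. The paper gets this, with no smallness of $\eta_{p,j}/p$, from an exact mean-value identity. Set $h_p(s):=[(1+s/p)^+]^p$, so your $f_p=h_p(Z_{p,j})$. Then $p[h_p(Z_{p,j})-h_p(Z)]=c_{p,j}\eta_{p,j}$ with $c_{p,j}=\int_0^1[(1+\frac{Z+t(Z_{p,j}-Z)}{p})^+]^{p-1}dt$.
\begin{itemize}
\item Monotonicity of $s\mapsto[(1+s/p)^+]^{p-1}$ and Lemma~\ref{llma} give $0\le c_{p,j}\le C_\sigma(1+|z|)^{-8+\sigma}$.
\item \eqref{5-8-2} gives $c_{p,j}\to e^Z$ locally.
\item The $\eta$-free part $b_p=p[h_p(Z)-e^Z]$ is bounded by $C\ln^2(2+|z|)(1+|z|)^{-8}$ on $\{|z|\le e^{\beta p}\}$; the complement is estimated crudely.
\end{itemize}
The equation $\eta_{p,j}=\int Kc_{p,j}\eta_{p,j}+B_p+o((1+|y|)^\tau)$, with $|B_p|\le C\ln(2+|y|)$, is then exactly linear in $\eta_{p,j}$. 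Normalizing by $N_p$ costs only $O(\ln(2+|y|)/N_p)$.

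\textbf{Other steps to correct.}
\begin{itemize}
\item \emph{Profile of $Z$.} The display \eqref{Z(x)} is missing a square. The profile actually used is $Z=-4\ln(1+\frac{|x|^2}{8\sqrt6})$, with $e^Z\le C(1+|z|)^{-8}$, not $e^Z\sim|z|^{-4}$ as you wrote. So $\partial_iZ=-\frac{8x_i}{8\sqrt6+|x|^2}$ is smooth and $\nabla\tilde\eta(0)=-\frac{1}{\sqrt6}(a_1,\dots,a_4)$. The gradient condition you discarded therefore gives $a_i=0$ directly, as in the paper. Your substitute, excluding $\partial_iZ$ through $|\tilde\eta|\le(1+|x|)^\tau$, cannot work, because $\partial_iZ$ and $4+x\cdot\nabla Z$ are both bounded.
\item \emph{The bound on $p\Phi_p$.} The claim $|p\Phi_p|\le C$ is false. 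For $|x|$ comparable to $d_0/\varepsilon_{p,j}$, the regular-part difference $\Phi_p(x)$ is in general of order one, so $p\Phi_p$ is of order $p$. What holds, and suffices, is $|p\Phi_p(x)|\le Cp\varepsilon_{p,j}|x|\le Cp\varepsilon_{p,j}^\tau(1+|x|)^\tau=o((1+|x|)^\tau)$. The region $\R^4\setminus\Omega_{p,j}$ and the other spikes are handled the same way, via $|K(x,z)|\le C|x|/|z|$ for $|z|\ge3d_0/\varepsilon_{p,j}$.
\item \emph{Finite energy in Lemma~\ref{llm}.} This hypothesis does not follow just from the representation and the growth bound. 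At infinity $\Delta\tilde\eta(x)\sim-\frac{1}{4\pi^2|x|^2}\int e^Z\tilde\eta$, which is square integrable only if $\int e^Z\tilde\eta=0$. The paper also passes over this point, but an extra argument (e.g.\ a mode-by-mode ODE analysis) is needed in either version.
\end{itemize}
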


\begin{proof}
By Green’s representation, we have
\[
u_p(x)=\int_\Omega G(x,\xi)(u^+_{p})^p(\xi)\,d\xi.
\]
Taking $x=x_{p,j}+\varepsilon_{p,j} y$ and $\xi=x_{p,j}+\varepsilon_{p,j} z$. 
Since $u_p(x_{p,j}+\varepsilon_{p,j} z)
=
{u_{p}(x_{p,j})}\Big(1+\frac{ Z_{p,j}(z)}{p}
  \Big)^+$ with $\varepsilon_p^4 p [{u_{p}(x_{p,j})}]^{p-1}=1$.
Then by above computations, it follows that
\begin{equation}\label{zp}
Z_{p,j}(y)
=
\int_{\Omega_p}K(y,z)F_{p,j}(z)\,dz+R_p(y),
\end{equation}
where
\begin{equation}\label{Fpj}
    F_{p,j}(z):=
\Big[\Big(1+\frac{ Z_{p,j}(z)}{p}
  \Big)^+\Big]^p, \quad  \quad
K(y,z):=
-\frac1{8\pi^2}\ln\frac{|y-z|}{|z|},
\end{equation}
and\begin{equation}\label{RRP} R_p(y)
:=
\int_{\Omega_p}
\left[
H(x_{p,j}+\varepsilon_{p,j} y,x_{p,j}+\varepsilon_{p,j} z)
-
H(x_{p,j},x_{p,j}+\varepsilon_{p,j} z)
\right]
F_{p,j}(z)\,dz.
\end{equation}
Since, $x_{p,j}\to x_{\infty,j}\in\Omega$, we take $d_0>0$ small enough, such that $B_{2d_0}(x_0) \subset \Omega$. \\
For all $|y|\le \frac{d_0}{\varepsilon_{p,j}}$, it follow that \[
\left|
H(x_{p,j}+\varepsilon_{p,j} y,x_{p,j}+\varepsilon_{p,j} z)
-
H(x_{p,j},x_{p,j}+\varepsilon_{p,j} z)
\right|
\le
C\varepsilon_{p,j} |y|.
\]
By \eqref{abs}, we have $\int_{\Omega_p}F_p(z)\,dz=O(1)$, then obviously
$|R_p(y)|\le C\varepsilon_{p,j} |y|$. Hence, it follows that \[
\frac{p|R_p(y)|}{(1+|y|)^\tau}
\le
Cp\varepsilon_{p,j}
\frac{|y|}{(1+|y|)^\tau}
\le
Cp\varepsilon_{p,j}^\tau
\to0,
\]
On the other hand, 
\begin{equation}\label{z}
Z(y)=\int_{\mathbb R^4}K(y,z)e^{Z(z)}\,dz.
\end{equation}
By \eqref{zp} and \eqref{z}, it follow \begin{equation}\label{zp-z}
Z_{p,j}(y)-Z(y)
=
\int_{\Omega_p}K(y,z)\bigl(F_{p,j}(z)-e^{Z(z)}\bigr)\,dz
-\int_{\mathbb R^4\setminus\Omega_p}K(y,z)e^{Z(z)}\,dz
+R_p(y).
\end{equation}
Now, we estimate that 
\begin{equation}\label{out}
    \int_{\mathbb R^4\setminus\Omega_p}K(y,z)e^{Z(z)}\,dz.
\end{equation}
Taking $d_0>0$ small enough, such that $B_{4d_0}(x_0) \subset \Omega$. Since $x_{p,j}\to x_{\infty,j}$ and taking $p$ large enough,\\ we have $B_{3d_0}(x_{p,j})\subset\Omega$.\\
Thus, if $z\notin \Omega_p=\frac{\Omega-x_{p,j}}{\varepsilon_{p,j}}$,
then obviously $x_{p,j}+\varepsilon_{p,j} z\notin\Omega$.
By $B_{3d_0}(x_p)\subset\Omega$, it follows that $|z|\ge \frac{3d_0}{\varepsilon_{p,j}}$.
\par On the other hand, by $|y|\le \frac{d_0}{\varepsilon_{p,j}}$, since for all $z\in\mathbb R^4\setminus\Omega_p$, it follows that  $\frac{|y|}{|z|}\le \frac13$. 
By direct computations, we have $\left|\ln\frac{|y-z|}{|z|}\right|
\le C\frac{|y|}{|z|}$,
so $|K(y,z)|
\le C\frac{|y|}{|z|}.$
By the above computations, it follows that \[
\begin{aligned}
\left|
p\int_{\mathbb R^4\setminus\Omega_p}
K(y,z)e^{Z(z)}\,dz
\right|
&\le
Cp|y|
\int_{|z|\ge 3d_0/\varepsilon_{p,j}}
\frac{e^{Z(z)}}{|z|}\,dz .
\end{aligned}
\]
Since \[
e^{Z(z)}
=
\left(1+\frac{|z|^2}{8\sqrt6}\right)^{-4}
\le C(1+|z|)^{-8},
\]
we have \[
\begin{aligned}
\int_{|z|\ge 3d_0/\varepsilon_{p,j}}
\frac{e^{Z(z)}}{|z|}\,dz\le
C\int_{3d_0/\varepsilon_{p,j}}^{+\infty}
r^{-1}r^{-8}r^3\,dr  \le
C\varepsilon_{p,j}^5 ,
\end{aligned}
\]
and then it follows that \[
\left|
p\int_{\mathbb R^4\setminus\Omega_p}
K(y,z)e^{Z(z)}\,dz
\right|
\le
Cp\varepsilon_{p,j}^5|y|.
\]
Moreover, \[
\frac{
\left|
p\int_{\mathbb R^4\setminus\Omega_p}
K(y,z)e^{Z(z)}\,dz
\right|
}{
(1+|y|)^\tau
}
\le
Cp\varepsilon_{p,j}^5
\frac{|y|}{(1+|y|)^\tau}.
\]
By the above computations, it follows that\[
\frac{
\left|
p\int_{\mathbb R^4\setminus\Omega_p}
K(y,z)e^{Z(z)}\,dz
\right|
}{
(1+|y|)^\tau
}
\le
Cp\varepsilon_{p,j}^{5-(1-\tau)}
=
Cp\varepsilon_{p,j}^{4+\tau}\to 0.
\]
Hence, for all $|y|\le \frac{d_0}{\varepsilon_{p,j}}$, we have 
\[
p\int_{\mathbb R^4\setminus\Omega_p}
K(y,z)e^{Z(z)}\,dz
=
o\bigl((1+|y|)^\tau\bigr).
\]
So we have 
\[
\eta_{p,j}(y)
=
p\int_{\Omega_p}K(y,z)
\bigl(F_{p,j}(z)-e^{Z(z)}\bigr)\,dz
+
o\bigl((1+|y|)^\tau\bigr).
\]
\par Let $f_p(s):=
\left[\left(1+\frac{s}{p}\right)^+\right]^p
$ and consider $|z|\le e^{\beta p}$ for some small constant $\beta>0$, such that $|Z(z)|\le \frac p2$.\\
Hence, it follows that \[
F_{p,j}-e^{Z}
=
f_p(Z_{p,j})-e^{Z}
=
\bigl[f_p(Z_{p,j})-f_p(Z)\bigr]
+
\bigl[f_p(Z)-e^Z\bigr].
\]
By the mean value theorem, we have
\[
p\bigl[f_p(Z_{p,j})-f_p(Z)\bigr]
=
c_{p,j}(z)\eta_{p,j}(z),
\]
where 
\begin{equation}\label{c_pj}  
        c_{p,j}(z):=
        \int_0^1
        \left[
        \left(
        1+\frac{Z(z)+t(Z_{p,j}(z)-Z(z))}{p}
        \right)^+
        \right]^{p-1}\,dt .
\end{equation}
and $\theta_p(z)$ lies between $Z_{p,j}(z)$ and $Z(z)$.
By Lemma \ref{llma}, we have $0\le c_{p,j}(z)\le\frac{C_\sigma}{(1+|z|)^{8-\sigma}}$, and 
\begin{equation}\label{c_pjto}
    c_{p,j}\to e^Z
\quad \mbox{in} L^1_{\mathrm{loc}}(\mathbb R^4).
\end{equation}
We define 
\begin{equation}\label{bp}
    b_p(z):=
p\bigl[f_p(Z(z))-e^{Z(z)}\bigr],
\end{equation}
then \begin{equation}\label{shuangtioahe}
    p\bigl(F_{p,j}-e^Z\bigr)
=
c_{p,j}(z)\eta_{p,j}(z)+b_p(z).
\end{equation}
Since $|z|\le e^{\beta p}$, we have $\left|\frac{Z(z)}{p}\right|\le \frac12$ and $p\ln\left(1+\frac{Z}{p}\right)
=
Z-\frac{Z^2}{2p}
+
O\left(\frac{|Z|^3}{p^2}\right)$.
By direct computations, we have 
\begin{equation}\label{b_p}
    b_p\to -\frac12Z^2e^Z
\quad uniformly \ \ locally \ \ in \ \ \mathbb R^4,
\end{equation}
with $b_p(z)\le C\frac{log^2\left ( 2+\left | z\right |\right )}{({1+\left | z\right |})^{8}}$. 
Now, we obtain \[
p\int_{\Omega_p}K(F_{p,j}-e^Z)
=
p\int_{\Omega_p\cap\{|z|\le e^{\beta p}\}}K(F_{p,j}-e^Z)
+
p\int_{\Omega_p\cap\{|z|>e^{\beta p}\}}K(F_{p,j}-e^Z).
\] and \[
\left|
p\int_{\Omega_p\cap\{|z|>e^{\beta p}\}}
K(y,z)\bigl(F_{p,j}-e^Z\bigr)\,dz
\right|
\le
p\int_{|z|>e^{\beta p}}
|K(y,z)|\bigl(F_{p,j}(z)+e^{Z(z)}\bigr)\,dz.
\]
By direct computations, we find \[
p\int_{|z|>e^{\beta p}}
|K(y,z)|\bigl(F_{p,j}+e^Z\bigr)\,dz
=
o\bigl((1+|y|)^\tau\bigr).
\]
By the above computations, it follows that 
\begin{equation}\label{nitaBp}
    \eta_{p,j}(y)
=
\int_{\Omega_p}K(y,z)c_p(z)\eta_{p,j}(z)\,dz
+
B_p(y)
+
o\bigl((1+|y|)^\tau\bigr)
\end{equation}
where $B_p(y):=
\int_{\Omega_p}K(y,z)b_p(z)\,dz$ satisfy 
\begin{equation}\label{BP<}
    |B_p(y)|\le C\ln(2+|y|).
\end{equation}
Indeed, by
\begin{equation}\label{Bp}
    B_p(y)
=
\int_{\Omega_p\cap\{|z|\le e^{\beta p}\}}
K(y,z)b_p(z)\,dz.
\end{equation} Clearly, we have
\[
\int_{\mathbb R^4}
\left|
\ln\frac{|y-z|}{|z|}
\right|
g(z)\,dz
\le
C\ln(2+|y|).
\]
where
\begin{equation}\label{gz}
    g(z):=
\frac{\ln^2(2+|z|)}{(1+|z|)^8}.
\end{equation}
By $\left|
\ln\frac{|y-z|}{|z|}
\right|
\le
|\ln|y-z||+|\ln|z||$, then we have 
\begin{equation}\label{intt}
    \int_{\mathbb R^4}
\left|
\ln\frac{|y-z|}{|z|}
\right|
g(z)\,dz
\le
I_1(y)+I_2,
\end{equation}
where $I_1(y):=
\int_{\mathbb R^4}
\big|\ln|y-z|\big|\,g(z)\,dz$ and $I_2:=
\int_{\mathbb R^4}
\big|\ln|z|\big|\,g(z)\,dz$.
\par First, we estimate $I_2$. For all  $|z|\le 1$, we find 
$\ln^2(2+|z|)(1+|z|)^{-8}\le C$.
Hence, it follows that \[
\int_{|z|\le 1}
\big|\ln|z|\big|g(z)\,dz
\le
C
\int_{|z|\le 1}
\big|\ln|z|\big|\,dz<\infty.
\]
For all $|z|\ge 1$, we have $\big|\ln|z|\big|\ln^2(2+|z|)
\le
C\ln^3(2+|z|)$
Thus, we have \[
\int_{|z|\ge 1}
\big|\ln|z|\big|g(z)\,dz
\le
C
\int_1^\infty
\ln^3(2+r)r^{-8}r^3\,dr<\infty.
\]
By the above computations, it follows that $I_2\le C$.
\par Next, we estimate $I_1(y)$, 
\begin{equation}\label{I1y}
    I_1(y)
=
\int_{\mathbb R^4}
\big|\ln|y-z|\big|\,g(z)\,dz=\int_{A_1}
\big|\ln|y-z|\big|g(z)\,dz+\int_{A_2}
\big|\ln|y-z|\big|g(z)\,dz
\end{equation}
where\[
A_1:=\{z:|y-z|\le 1\} \quad and \quad A_2:=\{z:|y-z|>1\}.
\]
Obviously, we find \[
\int_{A_1}
\big|\ln|y-z|\big|g(z)\,dz
\le
C
\int_{|y-z|\le 1}
\big|\ln|y-z|\big|\,dz=
C\int_{|\xi|\le 1}
\big|\ln|\xi|\big|\,d\xi<\infty,
\] 
then \[
\int_{A_1}
\big|\ln|y-z|\big|g(z)\,dz
\le C.
\]
In $A_2$, we have $|y-z|>1$, then $\big|\ln|y-z|\big|=\ln|y-z|$.\\
Hence, \[
|y-z|
\le
|y|+|z|
\le
(2+|y|)(2+|z|),
\]
then it follows that\[
\ln|y-z|
\le
\ln(2+|y|)
+
\ln(2+|z|).
\]
By direct computations, we have \[
\begin{aligned}
\int_{A_2}
\ln|y-z|g(z)\,dz
\le
\ln(2+|y|)
\int_{\mathbb R^4}g(z)\,dz+\int_{\mathbb R^4}
\ln(2+|z|)g(z)\,dz,
\end{aligned}
\]
then from \eqref{gz}, it follows that
\[
\int_{A_2}
\big|\ln|y-z|\big|g(z)\,dz
\le
C\ln(2+|y|)+C.
\]
Thus, \[
I_1(y)
\le
C\ln(2+|y|).
\]
Therefore, the proof of \eqref{BP<} is complete. 
Hence, we obtain \begin{equation}\label{nitap}
    \eta_{p,j}(y)
=
\int_{\Omega_p}K(y,z)c_{p,j}(z)\eta_{p,j}(z)\,dz
+
C\ln(2+|y|)
+
o\bigl((1+|y|)^\tau\bigr).
\end{equation}
\par Now, we define 
\begin{equation*}
N_p:=\max_{|x|\leq \frac{d_0}{\e_{p,j}}}\frac{| \eta _{p,j}(x)|}{(1+|x|)^\tau},
\end{equation*}
then obviously
$$\eqref{lpy1} \Leftrightarrow N_p \leq C.$$
Assume, for contradiction, that the conclusion does not hold. Then there exists a sequence still writing $p$, such that \[
N_p:=
\max_{|y|\leq \frac{d_0}{\e_{p,j}}}
\frac{|\eta_{p,j}(y)|}{(1+|y|)^\tau}
\to\infty.
\]
Now, we define \[
\psi_{p,j}(y):=
\frac{\eta_{p,j}(y)}{N_p}.
\]
then it follows that \[
|\psi_{p,j}(y)|\le (1+|y|)^\tau,
\] and there exists $y_{p,j}$, such that 
\begin{equation}\label{faipc}
    \frac{|\psi_{p,j}(y_p)|}{(1+|y_p|)^\tau}
=
1.
\end{equation}
By \eqref{nitaBp}, we have
\begin{equation}\label{faip}
    \psi_{p,j}(y)
=
\int_{\Omega_p}K(y,z)c_p(z)\psi_{p,j}(z)\,dz
+
\frac{B_p(y)}{N_p}+o\bigl(\frac{(1+|y|)^\tau}{N_p}\bigr)
\end{equation}
Now, we claim that $|y_p|\leq C$. Assume, for contradiction, that the conclusion does not hold. \\
By $|\psi_{p,j}(z)|\le (1+|z|)^\tau$ and $c_{p,j}(z)\le C_\sigma(1+|z|)^{-8+\sigma}$, we have \[
\left|
\int_{\Omega_p}
K(y,z)c_{p,j}(z)\psi_{p,j}(z)\,dz
\right|
\le
C
\int_{\mathbb R^4}
\left|
\ln\frac{|y-z|}{|z|}
\right|
(1+|z|)^{-8+\sigma+\tau}\,dz.\le
C\ln(2+|y|).
\]
Hence, \[
|\psi_{p,j}(y)|
\le
C\ln(2+|y|)
+
o\bigl((1+|y|)^\tau\bigr).
\]
and then 
\[
\frac{|\psi_{p,j}(y_p)|}{(1+|y_p|)^\tau}
\le
C
\frac{\ln(2+|y_p|)}{(1+|y_p|)^\tau}
+
o(1)\to0 \quad
(|y_p|\to\infty).
\]
This is a contradiction with \eqref{faipc}.
\par For any fixed $R>0$, $\psi_{p,j}$ satisfies 
\begin{equation}\label{faipeq} 
    \Delta^2\psi_{p,j}
=
c_{p,j}(y)\psi_{p,j}+o(1)  \quad~\mbox{in}~ {B}_{R}\left ( 0\right ).
\end{equation}
By standard regular theorem and $|y_{p,j}|\le C$, then it follows that \[
\psi_{p,j}\to \psi\not\equiv0
\quad\text{in } C^4_{\mathrm{loc}}(\mathbb R^4).
\] and $\psi$ satisfies \[
\Delta^2\psi=e^Z\psi
\quad\text{in }\mathbb R^4.
\]
Hence, we obtain \[
\psi
=
a_0\frac{\partial Z_\lambda(\frac{x}{\lambda})}{\partial \lambda}\Big|_{\lambda=1}
+
\sum_{i=1}^4a_i\partial_iZ.
\]
On the other hand, by $Z_p(0)=0$, $ \nabla  Z_p(0)=0$, $Z(0)=0$ and $\nabla Z(0)=0$, we have $\eta_p(0)=0$, $\nabla\eta_p(0)=0$ and then $\psi(0)=0$, $\nabla\psi(0)=0$.
Since $\partial_iZ(0)=0$, $\Lambda Z(0)=4$ and $\psi(0)=0$, then we have $a_0=0$. Since $\nabla\psi(0)=0$, by direct computations we have $a_1=\cdots=a_4=0$.Hence, $\psi\equiv0$. 
This is a  contradiction with $\psi\not\equiv0$.
\par Hence, we get \[
|\eta_{p,j}(y)|
\le
C(1+|y|)^\tau.
\]
\end{proof}
\begin{Prop}\label{prop3-2}
Let $\eta _{p,j}$ be given by \eqref{dsa}.  Then  
\begin{equation*}
\lim_{p\rightarrow +\infty} \eta _{p,j}=\eta _0 ~\mbox{in}~C^4_{loc}(\R^2),
\end{equation*}
where
 $\eta_0$ is a solution of the non-homogeneous linear equation
\begin{equation}\label{6-4-2}
\Delta^2 \eta _0-e^{Z}\eta _0=-\frac{Z^2}{2}e^{Z}~\mbox{in}~\R^4.
\end{equation}
Moreover, for any $\tau\in (0,1)$, there exists $C>0$ such that
\begin{equation}\label{boundw0}
| \eta_{0}(x)|\leq C(1+|x| )^\tau.
\end{equation}
\end{Prop}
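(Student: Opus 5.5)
We derive the statement by passing to the limit in the equation satisfied by $\eta_{p,j}$, using the uniform bound of Proposition \ref{key-1} for compactness. First we write the local equation. From the definitions of $Z_{p,j}$ and $Z$, in $B_{d_0/\e_{p,j}}(0)$ we have $\Delta^2 Z_{p,j}=F_{p,j}$ and $\Delta^2 Z=e^Z$. With the splitting \eqref{shuangtioahe} this gives
\begin{equation*}
\Delta^2\eta_{p,j}=p\bigl(F_{p,j}-e^Z\bigr)=c_{p,j}(y)\eta_{p,j}+b_p(y)\quad\text{in } B_{d_0/\e_{p,j}}(0).
\end{equation*}
Fix $R>0$. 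Proposition \ref{key-1} gives $|\eta_{p,j}|\le C(1+R)^\tau$ on $B_{2R}(0)$. For $p$ large, Lemma \ref{llma} and the $C^4_{loc}$ convergence \eqref{5-8-2} show that $c_{p,j}$ is bounded on $B_{2R}(0)$ and converges uniformly there to $e^Z$. The convergence is uniform, not merely in $L^1$, because $(1+s/p)^{p-1}\to e^{s}$ locally uniformly and $Z_{p,j}\to Z$ uniformly on compacts. By \eqref{b_p}, $b_p\to-\tfrac12 Z^2e^Z$ uniformly on $B_{2R}(0)$.

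Next come the compactness estimates. The right-hand side above is bounded in $L^\infty(B_{2R})$, and $\eta_{p,j}$ is bounded in $L^\infty(B_{2R})$. Interior $L^q$ estimates for $\Delta^2$ then give a bound in $W^{4,q}(B_{3R/2})$ for every $q<\infty$, hence in $C^{3,\alpha}(B_{3R/2})$. To reach $C^4$ we need Hölder continuity of the right-hand side. This follows because $c_{p,j}$ and $b_p$ are smooth functions of $Z_{p,j}$ and $Z$, which are bounded in $C^1(B_{2R})$, and $\eta_{p,j}$ is bounded in $C^1$. Schauder estimates then bound $\eta_{p,j}$ in $C^{4,\alpha}(B_R)$. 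A diagonal argument over $R\to\infty$ gives a subsequence with $\eta_{p,j}\to\eta_0$ in $C^4_{loc}(\R^4)$. Passing to the limit in the equation yields \eqref{6-4-2}, and passing to the limit pointwise in Proposition \ref{key-1} yields \eqref{boundw0}.

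The main obstacle is \emph{uniqueness of the limit}, which is needed for convergence of the whole family rather than a subsequence. Two limits $\eta_0,\tilde\eta_0$ would differ by $v=\eta_0-\tilde\eta_0$ with $\Delta^2 v=e^Zv$ and $|v|\le C(1+|x|)^\tau$. To apply Lemma \ref{llm} we need $\int|\Delta v|^2<\infty$. We would obtain this from the integral representation: passing to the limit in \eqref{nitaBp} gives $v(y)=\int_{\R^4}K(y,z)e^{Z}v\,dz$. Differentiating under the integral, the decay $e^Z\sim|z|^{-8}$ together with the growth $|v|\le C(1+|z|)^\tau$ gives $|\Delta v|=O(|y|^{-2})$, which is square integrable at infinity in $\R^4$ only borderline. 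We would therefore use the sharper expansion $\Delta\int K e^Zv=-\tfrac{1}{4\pi^2}|y|^{-2}\int e^Zv+O(|y|^{-3})$. Alternatively, we can bypass the issue by using that the spherical-harmonic decomposition of $v$ reduces to ODEs whose bounded-growth solutions lie in the kernel span. Then Lemma \ref{llm} gives $v$ in the span of $\partial_iZ$ and $4+x\cdot\nabla Z$.

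The normalization pins down the coefficients. Since $Z_{p,j}(0)=Z(0)=0$ and $\nabla Z_{p,j}(0)=\nabla Z(0)=0$ (because $x_{p,j}$ is a maximum point), we have $\eta_{p,j}(0)=0$ and $\nabla\eta_{p,j}(0)=0$, and both pass to the limit. Hence $v(0)=0$ forces the coefficient of $4+x\cdot\nabla Z$ to vanish. The condition $\nabla v(0)=0$ forces the remaining coefficients to vanish, since $\partial_{ij}Z(0)=-\tfrac{1}{2\sqrt6}\delta_{ij}\ne0$; the radial expansion $Z=-4\ln(1+|x|^2/(8\sqrt6))$ should be used here. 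Thus $v\equiv0$, the limit $\eta_0$ is uniquely determined by \eqref{6-4-2}, \eqref{boundw0}, $\eta_0(0)=0$ and $\nabla\eta_0(0)=0$, and the full family $\eta_{p,j}$ converges.
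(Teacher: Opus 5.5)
Your compactness argument is the paper's proof. The paper writes $\Delta^2\eta_{p,j}=c_{p,j}\eta_{p,j}+b_p$, invokes the local bound of Proposition \ref{key-1}, and passes to the limit. Your $W^{4,q}$ and Schauder details and the diagonal argument correctly supply what it leaves implicit. They give $C^4_{loc}$ convergence along a subsequence to a solution of \eqref{6-4-2} satisfying \eqref{boundw0}. The paper never addresses uniqueness of the limit (it works along subsequences throughout), so that part of your proposal goes beyond it. A minor slip: $\partial_{ij}Z(0)=-\frac{1}{\sqrt6}\delta_{ij}$, not $-\frac{1}{2\sqrt6}\delta_{ij}$, but only its non-vanishing matters.

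The extra uniqueness step has a genuine gap, because your sharper expansion does not settle the borderline case. It shows
\[
\Delta v(y)=-\frac{1}{4\pi^2}\,|y|^{-2}\int_{\R^4}e^{Z}v\,dz+O(|y|^{-3}).
\]
Since $\int_{|y|>1}|y|^{-4}\,dy=\infty$ in $\R^4$, you get $\int_{\R^4}|\Delta v|^2<\infty$ if and only if $\int_{\R^4}e^Zv\,dz=0$. Equivalently, $v$ must have no $\ln|y|$ growth, and the bound $|v|\le C(1+|y|)^\tau$ does not exclude such growth. You never prove this vanishing. The spherical-harmonic alternative only moves the same issue into the radial mode, where a regular solution behaving like $\ln r$ at infinity is exactly what must be ruled out.

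The missing ingredient is a short Green identity. Apply Green's second formula on $B_R$ to $v$ and $\Psi_0=\frac{8\sqrt6-|x|^2}{8\sqrt6+|x|^2}$, which satisfies $\Delta^2\Psi_0=e^Z\Psi_0$ and $\Psi_0=-1+O(|x|^{-2})$. The volume term vanishes. Your integral representation gives $v=O(\ln|y|)$, $\nabla v=O(|y|^{-1})$, $\Delta v=O(|y|^{-2})$ and $\nabla\Delta v=O(|y|^{-3})$. With these, the boundary terms reduce to $-\int_{\partial B_R}\partial_\nu\Delta v\,d\sigma+o(1)=-\int_{B_R}e^Zv\,dy+o(1)$. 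Hence $\int_{\R^4}e^Zv=0$, so $\Delta v=O(|y|^{-3})$ is square integrable near infinity. Lemma \ref{llm} then applies, and your normalization at the origin finishes the argument.

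This is the same identity used in Appendix \ref{76}, with $S=0$. The paper itself applies Lemma \ref{llm} in the proof of Proposition \ref{key-1} to functions of growth $(1+|y|)^\tau$ without checking the $L^2$ hypothesis, so the same patch is needed there too.
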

\begin{proof}
 By \eqref{c_pj}, \eqref{c_pjto}, \eqref{bp}, \eqref{shuangtioahe} and the proof of Proposition \ref{key-1} we have
 \[
\Delta^{2}\eta_{p,j}
= p\left(\Delta^{2}Z_{p,j}-\Delta^{2}Z\right) 
= p\left[\left(1+\frac{Z_{p,j}}{p}\right)^{+}\right]^{p}-pe^{Z} 
= c_{p,j}\eta_{p,j}+b_p.
\]
Since $|\eta_{p,j}(y)|
\le
C(1+|y|)^\tau$, we have 
\[\Delta^2 \eta _0-e^{Z}\eta _0=-\frac{Z^2}{2}e^{Z}~\mbox{in}~\R^4\] and \[| \eta_{0}(x)|\leq C(1+|x| )^\tau.\]
\end{proof}

We next define
\begin{equation}\label{sat}
\theta_p,j(y)
:=
p[\eta_p,j(y)-\eta_0(y)]
=
p^2\left[
Z_{p,j}(y)-Z(y)-\frac{\eta_0(y)}p
\right],
\end{equation}
where $\eta_{p,j}$ is given by \eqref{dsa} and $\eta_{0}$ is the limit function obtained in Proposition \ref{prop3-2}. We prove the following estimate for  $\theta_{p,j}$.
\begin{Prop}\label{key-2}
Let $\theta_{p,j}$ be given by \eqref{sat}. Then for any small fixed $d_0,\tau_1>0$, there exists $C>0$ such that
\begin{equation}\label{blpy1}
|\theta_{p,j}(x)|\leq C(1+|x| )^{\tau_1}~\mbox{ in}~ B_{\frac{d_0}{\e_{p,j}}}(0).
\end{equation}
 It follows that
\[Z_{p,j}=Z+\frac{\eta_{0}}{p}+O\left(\frac{1}{p^{2}}\right)
\,\,\mbox{ in }C^{4}_{loc}(\mathbb R^{4}).\]
\end{Prop}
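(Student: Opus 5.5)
We follow the scheme of Proposition \ref{key-1}, now one order higher. Subtracting from the integral identity \eqref{nitaBp} for $\eta_{p,j}$ the corresponding identity $\eta_0(y)=\int_{\mathbb R^4}K(y,z)\big(e^Z\eta_0-\tfrac12Z^2e^Z\big)(z)\,dz$, we obtain an equation for $\theta_{p,j}$. The representation for $\eta_0$ itself follows from Proposition \ref{prop3-2}: $\eta_0$ grows at most like $(1+|x|)^\tau$ and vanishes to first order at $0$, and $\eta_{p,j}(0)=0$, $\nabla\eta_{p,j}(0)=0$. This produces
\begin{equation*}
\theta_{p,j}(y)=\int_{\Omega_p}K(y,z)c_{p,j}(z)\theta_{p,j}(z)\,dz+D_p(y)+pR_p(y)+\mathcal E_p(y),
\end{equation*}
with
\begin{equation*}
D_p(y):=\int_{\Omega_p}K(y,z)\Big[p\big(c_{p,j}-e^Z\big)\eta_0+p\Big(b_p+\tfrac12Z^2e^Z\Big)\Big](z)\,dz .
\end{equation*}
Here $pR_p$ is the $H$-correction multiplied by $p^2$, and $\mathcal E_p$ collects the tails outside $\Omega_p$ and outside $\{|z|\le e^{\beta p}\}$, also multiplied by $p^2$. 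Since $\varepsilon_{p,j}\sim e^{-p/8}$ by Theorem A, every factor $p^m\varepsilon_{p,j}^{\,s}$ with $s>0$ is negligible. Hence $p^2R_p$ and $\mathcal E_p$ are $o\big((1+|y|)^{\tau_1}\big)$ exactly as in the proof of Proposition \ref{key-1}.

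The central step is to show that the source terms in $D_p$ are $O(1)$ with decay. Expanding $p\ln(1+s/p)=s-s^2/(2p)+s^3/(3p^2)+\dots$ in the region $|z|\le e^{\beta p}$, we get
\begin{equation*}
b_p+\tfrac12Z^2e^Z=\frac1p\Big(\tfrac13Z^3+\tfrac18Z^4\Big)e^Z+O\Big(\frac{|Z|^6e^Z}{p^2}\Big).
\end{equation*}
Thus $p\big(b_p+\tfrac12Z^2e^Z\big)$ is bounded by $C\ln^4(2+|z|)(1+|z|)^{-8}$. Similarly,
\begin{equation*}
c_{p,j}=\int_0^1 f_p'\big(Z+t\,\eta_{p,j}/p\big)\,dt,
\end{equation*}
so $c_{p,j}-e^Z=\frac1p\big(\tfrac12\eta_{p,j}-\tfrac12Z^2\big)e^Z+\dots$. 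Using \eqref{lpy1}, $p(c_{p,j}-e^Z)\eta_0$ is bounded by $C(1+|z|)^{2\tau}\ln^2(2+|z|)(1+|z|)^{-8+\sigma}$. The estimates $I_1,I_2$ from the proof of \eqref{BP<} then yield $|D_p(y)|\le C\ln(2+|y|)$.

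With this bound in hand, we run the same blow-up argument as before. Set $N_p:=\max_{|y|\le d_0/\varepsilon_{p,j}}|\theta_{p,j}|(1+|y|)^{-\tau_1}$ and suppose $N_p\to\infty$. Normalize $\psi_p:=\theta_{p,j}/N_p$. The logarithmic bound forces the maximum point $y_p$ of $|\psi_p|(1+|y|)^{-\tau_1}$ to stay bounded. By elliptic regularity, $\psi_p\to\psi\not\equiv0$ in $C^4_{loc}$, where $\Delta^2\psi=e^Z\psi$ and $|\psi|\le(1+|y|)^{\tau_1}$; this growth gives finite $\int|\Delta\psi|^2$. Lemma \ref{llm} then puts $\psi$ in the kernel, and the normalizations $\theta_{p,j}(0)=0$, $\nabla\theta_{p,j}(0)=0$ force $\psi\equiv0$, a contradiction. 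Hence $N_p\le C$, which is \eqref{blpy1}. Finally, $\Delta^2\theta_{p,j}=c_{p,j}\theta_{p,j}+(\text{bounded sources})$ holds locally, so $C^4_{loc}$ bounds on $\theta_{p,j}$ follow, which is $Z_{p,j}=Z+\eta_0/p+O(p^{-2})$.

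The main obstacle is the uniform control of the second-order remainders in the region where $|Z|$ and $|\eta_{p,j}|$ are large relative to $p$, i.e.\ the annulus $R\le|z|\le e^{\beta p}$. There the Taylor expansion needs $|Z|/p$ small, and the cubic remainder $p^{-2}|Z|^6$ has to be integrable against the kernel uniformly in $p$. To handle this, we would use Lemma \ref{llma} to get $f_p(Z_{p,j})\le C(1+|z|)^{-8+\sigma}$. The powers of $\ln$ are then absorbed by slightly reducing the decay exponent, so all sources are bounded by $C(1+|z|)^{-8+2\sigma+2\tau}$, which is enough for the logarithmic estimate.
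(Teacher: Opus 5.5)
Your argument is the paper's argument. Both use the log-kernel representation of $Z_{p,j}$, discard the $H$-part and all tails because $p^m\varepsilon_{p,j}^{s}\to0$, bound the source by $C\ln(2+|y|)$, and close with the weighted-sup blow-up, Lemma~\ref{llm}, and the normalization at $0$.

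The differences are bookkeeping:
\begin{itemize}
\item You keep the coefficient $c_{p,j}$ of Proposition~\ref{key-1} and put $p(c_{p,j}-e^Z)\eta_0$ into the source, so \eqref{lpy1} is needed there. The paper instead linearizes between $Z_{p,j}$ and $Z+\eta_0/p$ (coefficient $a_{p,j}$), so its source involves only $Z$ and $\eta_0$.
\item The paper cuts at $B_p(0)$, where $|Z|\le C\ln p$ keeps every Taylor parameter small. Your cut at $e^{\beta p}$ needs the piecewise crude bounds of your last paragraph. These do work: where $Z^2\ge p$ or $(1+|z|)^{\tau}\ge p$, the extra factor $p$ is absorbed into $\ln^2(2+|z|)$ or $(1+|z|)^{\tau}$.
\item Your expansion of $c_{p,j}-e^Z$ drops a term $-\frac1p Ze^Z$, which is harmless.
\item At this stage only $\ln\varepsilon_{p,j}=-\frac p8+o(p)$ is available, from \eqref{ConvMax}. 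This suffices, but \eqref{nn3-29-03} cannot be invoked, since it is derived from this very proposition.
\end{itemize}

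The one step that does not hold as justified is ``$|\psi|\le(1+|y|)^{\tau_1}$ gives finite $\int|\Delta\psi|^2$''. The limit satisfies $\psi(y)=\int_{\mathbb R^4}K(y,z)e^{Z}\psi\,dz$, so
\[
\Delta\psi(y)=-\frac{1}{4\pi^2}\int_{\mathbb R^4}\frac{e^{Z(z)}\psi(z)}{|y-z|^2}\,dz=-\frac{m}{4\pi^2|y|^2}+O\big(|y|^{-3}\big),\qquad m:=\int_{\mathbb R^4}e^{Z}\psi\,dz .
\]
Since $|y|^{-2}$ is not square integrable at infinity in $\mathbb R^4$, a radial $\ln|y|$ mode is compatible with your growth bound. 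You must first prove $m=0$.

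To do so, apply Green's identity on $B_R$ to $\psi$ and the radial kernel element $\Phi=4+x\cdot\nabla Z$, exactly as in Appendix~\ref{76} with $S=0$:
\begin{itemize}
\item The volume term is identically zero.
\item $\Phi\to-4$, while $\partial_r\Phi$, $\Delta\Phi$, $\partial_r\Delta\Phi$ are $O(R^{-3})$, $O(R^{-6})$, $O(R^{-7})$.
\item $\partial_\nu\Delta\psi=\frac{m}{2\pi^2R^3}+O(R^{-4})$, so the boundary terms tend to $-4m$.
\end{itemize}
Hence $m=0$, so $\Delta\psi=O(|y|^{-3})$, $\int|\Delta\psi|^2<\infty$, and Lemma~\ref{llm} applies. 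Alternatively, quote the nondegeneracy result in its weighted form, for solutions with $|v|\le C(1+|x|)^{\tau}$, $\tau<1$. The paper's proof passes over this point as well; with this addition your argument is complete.
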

\begin{proof}The proof of \eqref{blpy1} is similar to that of Proposition \ref{key-1}. First we recall that  \[
F_{p,j}(y)
=
\left[\left(1+\frac{Z_{p,j}(y)}p\right)^+\right]^p,
\quad
K(y,z)
=
-\frac1{8\pi^2}
\ln \frac{|y-z|}{|z|},
\quad and \quad 
Z_{p,j}(y)
=
\int_{\Omega_p}K(y,z)F_{p,j}(z)\,dz
+
R_p(y),
\] where $|R_p(y)|=O(\varepsilon_p|y|)$.
Hence, it holds $p^2|R_p(y)|
=
O(p^2\varepsilon_p|y|)
=
o((1+|y|)^\tau)$.
\par New, we define that $\widehat Z_p(y)
:=
Z(y)+\frac{\eta_0(y)}p$ and $\widehat F_p(y)
:=
\left(1+\frac{\widehat Z_p(y)}p\right)^p$.
Since $Z(y)
=
\int_{\mathbb R^4}K(y,z)e^{Z(z)}\,dz$ and $\Delta^2\eta_0
=
e^Z\left(\eta_0-\frac12Z^2\right)$, then we have $\Delta^2\eta_0=f$ where $f(z)
:=
e^{Z(z)}
\left[
\eta_0(z)-\frac12Z^2(z)
\right]$.
Since we have $\eta_0(y)
=
\int_{\mathbb R^4}K(y,z)f(z)\,dz$, it follows that 
\[
\begin{aligned}
\theta_{p,j}(y)
={}&
p^2
\int_{\Omega_p}
K(y,z)
\left[
F_{p,j}(z)-e^{Z(z)}
-
\frac1p e^{Z(z)}
\left(
\eta_0(z)-\frac12Z^2(z)
\right)
\right]dz
-
p^2
\int_{\mathbb R^4\setminus\Omega_p}
K(y,z)e^{Z(z)}\,dz
\\
&-
p
\int_{\mathbb R^4\setminus\Omega_p}
K(y,z)e^{Z(z)}
\left(
\eta_0(z)-\frac12Z^2(z)
\right)dz
+
p^2R_p(y).
\end{aligned}
\]
In the Proposition \ref{key-1}, we ready prove that 
\begin{equation}\label{s1}
    p^2
\int_{\mathbb R^4\setminus\Omega_p}
|K(y,z)|e^{Z(z)}\,dz
=
o((1+|y|)^\tau).
\end{equation}
Now, we claim that  
\begin{equation}\label{s2}
    p
\int_{\mathbb R^4\setminus\Omega_p}
|K(y,z)|e^{Z(z)}
\left(
|\eta_0(z)|+Z^2(z)
\right)dz
=
o((1+|y|)^\tau).
\end{equation}
Since $x_{p,j}\to x_{\infty ,j}\in\Omega$, there exists a constant $\rho>0$, such that $B_\rho(x_{p,j})\subset\Omega$, when $p$ large enough.
For every $z\in\mathbb R^4\setminus\Omega_p$, we have $x_{p,j}+\varepsilon_{p,j}z\notin\Omega$ and then $|z|\ge \frac{\rho}{\varepsilon_p,j}$. \par On other hand, for every $|y|\le \frac{d_0}{\varepsilon_{p}}$, we have $\frac{|y|}{|z|}
\le \frac{d_0}{\rho}
<\frac13$, then $\left|
\ln\frac{|y-z|}{|z|}
\right|
\le C\frac{|y|}{|z|}$.\\
Thus, we obtain \[
I
:=
p
\int_{\mathbb R^4\setminus\Omega_p}
|K(y,z)|e^{Z(z)}
\left(
|\eta_0(z)|+Z^2(z)
\right)dz
\le
Cp|y|
\int_{|z|\ge \rho/\varepsilon_p}
\frac{e^{Z(z)}}{|z|}
\left(
|\eta_0(z)|+Z^2(z)
\right)dz.
\]
Since \[
|\eta_0(z)|+Z^2(z)
\le
C(1+|z|)^\tau
+
C\ln^2(2+|z|) \quad and \quad e^{Z(z)}\le \frac{C}{(1+|z|)^8},
\] it follows that 
\[
\int_{|z|\ge \rho/\varepsilon_{p,j}}
\frac{e^{Z(z)}}{|z|}
\left(
|\eta_0(z)|+Z^2(z)
\right)dz
\le
C
\int_{\rho/\varepsilon_{p,j}}^{+\infty}
r^{-8}r^{-1}
\left(
r^\tau+\ln^2r
\right)
r^3\,dr
\le
C
\left(
\frac{\rho}{\varepsilon_{p,j}}
\right)^{-5+\tau}.
\]
Hence, we get $I
\le
Cp|y|\varepsilon_{p,j}^{5-\tau}
=
o((1+|y|)^\tau)$.
Thus, we have completed the proof of the claim. 
So we have \[
\theta_{p,j}(y)
=
p^2
\int_{\Omega_p}
K(y,z)
\left[
F_{p,j}-e^Z
-
\frac1p e^Z
\left(
\eta_0-\frac12Z^2
\right)
\right]dz
+
o((1+|y|)^\tau).
\]
Since $F_{p,j}(z)
\le
\frac{C}{(1+|z|)^{8-\sigma}}
e^{Z(z)}
\le
\frac{C}{(1+|z|)^8}$ and $|\eta_0(z)|
\le
C(1+|z|)^\tau$, then it follows that 
\[
p^2
\int_{\Omega_p\setminus B_{p}(0)}
|K(y,z)|
|F_{p,j}(z)-e^{Z(z)}|\,dz
=
o((1+|y|)^\tau)
\]and 
\[
p
\int_{\Omega_p\setminus B_{p}(0)}
|K(y,z)|e^{Z(z)}
\left(
|\eta_0(z)|+Z^2(z)
\right)dz
=
o((1+|y|)^\tau).
\]
Hence, we have
\begin{equation}\label{xitap1}
    \theta_{p,j}(y)
=
p^2
\int_{B_{p}(0)}
K(y,z)
\left[
F_{p,j}-e^Z
-
\frac1p e^Z
\left(
\eta_0-\frac12Z^2
\right)
\right]dz
+
o((1+|y|)^\tau).
\end{equation}
For each $z\in B_{p}(0)$, we have $|Z(z)|\le C\ln p$, and $|\eta_0(z)|\le Cp^\tau$, then it follows that \[
\left|
\frac{\widehat Z_p(z)}p
\right|
=
\left|
\frac{Z(z)}p
+
\frac{\eta_0(z)}{p^2}
\right|
\le
C\frac{\ln p}{p}
+
\frac{C}{p^{2-\tau}}
\to 0.
\]
We write \[
F_{p,j}-e^Z
-
\frac1p e^Z
\left(
\eta_0-\frac12Z^2
\right)
=
(F_{p,j}-\widehat F_p)
+
\left[
\widehat F_p-e^Z
-
\frac1p e^Z
\left(
\eta_0-\frac12Z^2
\right)
\right]:=(F_{p,j}-\widehat F_p)+G_p.
\]
By the mean value theorem, we have \[
p^2(F_{p,j}-\widehat F_p)
=
a_{p,j}(z)\theta_{p,j}(z),
\]where \[
a_{p,j}(z)
:=
\int_0^1
        \left[
        \left(
        1+\frac{\widehat Z_p(z)+t(Z_{p,j}(z)-\widehat Z_p(z))}{p}
        \right)^+
        \right]^{p-1}\,dt .
\]
So on any compact subset, we have $a_{p,j}\to e^Z$ and for each $z\in B_{p}(0)$, we have $|a_{p,j}(z)|
\le
C(1+|z|)^{-8+\sigma}$.\\
New, by the
\[
p\ln\left(1+\frac{\widehat Z_p}{p}\right)
=
\widehat Z_p
-
\frac{\widehat Z_p^2}{2p}
+
\frac{\widehat Z_p^3}{3p^2}
+
O\left(
\frac{|\widehat Z_p|^4}{p^3}
\right),
\] we have \[
p\ln \left(1+\frac{\widehat Z_p}{p}\right)
=
Z
+
\frac1p
\left(
\eta_0-\frac12Z^2
\right)+
\frac1{p^2}
\left(
-Z\eta_0+\frac13Z^3
\right)
+
O\left(
\frac{H(z)}{p^3}
\right),
\]where $H(z)
:=
1+\ln^4(2+|z|)
+
(1+|z|)^{2\tau}$.
Hence, 
\[
\widehat F_p
=
e^Z
\left[
1
+
\frac1p
\left(
\eta_0-\frac12Z^2
\right)
+
\frac1{p^2}D(z)
\right]
+
O\left(
\frac{e^ZH(z)}{p^3}
\right),
\]where 
\[
D(z)
:=
-Z\eta_0+\frac13Z^3
+
\frac12
\left(
\eta_0-\frac12Z^2
\right)^2.
\]
So \[
G_p(z)
:=
e^ZD(z)
+
O\left(
\frac{e^ZH(z)}p
\right).
\]
By the above computations, it follows that
\[
|G_p(z)|
\le
Ce^{Z(z)}
\left[
1+\ln^4(2+|z|)
+
(1+|z|)^{2\tau}
\right].
\]
Thus, \[
\theta_{p,j}(y)
=
\int_{B_{p}(0)}
K(y,z)a_p(z)\theta_p(z)\,dz
+
B_p^{(2)}(y)
+
o((1+|y|)^\tau),
\]where
\[
B_p^{(2)}(y)
:=
\int_{B_{p}(0)}
K(y,z)G_p(z)\,dz,
\]
The same as Proposition \ref{key-1}, we have $|B_p^{(2)}(y)|
\le
C\ln(2+|y|)$.
\par Next we prove
\begin{equation}\label{Mp}
    M_p
:=
\sup_{|y|\le d_0/\varepsilon_p}
\frac{|\theta_p(y)|}{(1+|y|)^\tau}
\le C.
\end{equation}
Suppose, for contradiction, that there exists a subsequence, still denoted by $p$, such that $M_p\to+\infty$.\\
We define \[
\psi_p(y)
:=
\frac{\theta_{p,j}(y)}{M_p},
\]then \[
|\psi_p(y)|
\le
(1+|y|)^\tau
\]and 
\[
\psi_p(y)
=
\int_{B_{p}(0)}
K(y,z)a_{p,j}(z)\psi_p(z)\,dz
+
\frac{B_p^{(2)}(y)}{M_p}
+
\frac{o((1+|y|)^\tau)}{M_p}.
\]
Obviously, we have 
\begin{equation}\label{faiip}
    \psi_p(y)
=
\int_{B_{p}(0)}
K(y,z)a_{p,j}(z)\psi_p(z)\,dz
+
o((1+|y|)^\tau).
\end{equation}
Since $|a_{p,j}(z)|
\le
\frac{C}{(1+|z|)^{8-\sigma}}$ and $|\psi_p(z)|
\le
(1+|z|)^\tau$, then it holds
\[
\left|
\int_{B_{\rho/\varepsilon_p}(0)}
K(y,z)a_{p,j}(z)\psi_p(z)\,dz
\right|
\le
C
\int_{\mathbb R^4}
\left|
\ln\frac{|y-z|}{|z|}
\right|
(1+|z|)^{-8+\sigma+\tau}\,dz
\le
C\ln(2+|y|).
\]
So, we have \[
|\psi_p(y)|
\le
C\ln(2+|y|)
+
o((1+|y|)^\tau).
\]
Taking $y_p$ such that, $M_p
=
\frac{|\theta_{p,j}(y_p)|}{(1+|y_p|)^\tau}$ and then $|\psi_p(y_p)|
=
(1+|y_p|)^\tau$.
We claim that $|y_p|\le C$, for some constant $C>0$. If not, we suppose that $|y_p|\to+\infty$, then \[
\frac{|\psi_p(y_p)|}{(1+|y_p|)^\tau}
\le
C
\frac{\ln(2+|y_p|)}{(1+|y_p|)^\tau}
+
o(1)
\to0.
\]
This is a contraction with $|\psi_p(y_p)|
=
(1+|y_p|)^\tau$.
Thus, $|y_p|\le C$.\\
For any fixed $R>0$, taking $p$ large enough, such that $B_R(0)\subset B_{\rho/\varepsilon_p}(0)$.\\
Hence, \[
\Delta^2\psi_p
=
a_{p,j}(y)\psi_p(y)+o(1)
\quad\text{in }B_R(0).
\]
Since \[
a_{p,j}\to e^Z
\quad\text{uniformly locally in any compact subset},
\]
and
\[
|\psi_p(y)|\le C_R
\quad\text{in }B_R(0),
\]we have \[
\psi_p\to\psi\not\equiv0
\quad\text{in } C^4_{\mathrm{loc}}(\mathbb R^4).
\]and \[
\Delta^2\psi=e^Z\psi
\quad\text{in }\mathbb R^4,
\]
with $|\psi(y)|\le C(1+|y|)^\tau$.
On the other hand, by $Z_{p,j}(0)=0$, $ \nabla  Z_{p,j}(0)=0$, $Z(0)=0$, $\nabla Z(0)=0$, $\eta_0(0)=0$ and $\nabla\eta_0(0)=0$ we have $\theta_{p,j}(0)=0$, $\nabla\theta_{p,j}(0)=0$ and then $\psi(0)=0$, $\nabla\psi(0)=0$.
The same as Proposition \ref{key-1}, we have $\psi\equiv0.$ This is a contraction. 
Thus, we get $|\theta_{p,j}(y)|
\le
C(1+|y|)^\tau$ and $|y|\le \frac{d_0}{\varepsilon_{p,j}}$.
\end{proof}

We now improve \eqref{abs} as follows.
\begin{Prop}\label{Prop:5-12-52}
Let $d>0$ be fixed and sufficiently small. Then there exists a sufficiently small constant $\delta\in (0,1)$ such that
\begin{equation}\label{5-12-52}
\int_{\frac{d}{\varepsilon_{p,j}}(0)}\Big[\Big(1+\frac{ Z_{p,j}(z)}{p}
  \Big)^+\Big]^pdz
   = 64{\pi}^{2}\left(1-\frac{13}{3p}\right)+O\Big(\frac{1}{p^{2-\delta}}\Big).
\end{equation}
\end{Prop}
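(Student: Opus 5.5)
The plan is to expand the integrand to second order using Proposition \ref{key-2}, and then to evaluate the resulting first-order coefficient through an identity obtained by testing equation \eqref{6-4-2} against the dilation element of the kernel from Lemma \ref{llm}.

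\textbf{Step 1: expansion of the integrand.} Write $F_{p,j}=[(1+Z_{p,j}/p)^+]^p$ and split $B_{d/\varepsilon_{p,j}}(0)=B_{p^{\delta/8}}(0)\cup\big(B_{d/\varepsilon_{p,j}}(0)\setminus B_{p^{\delta/8}}(0)\big)$.

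On the inner ball, Proposition \ref{key-2} gives $Z_{p,j}=Z+\eta_0/p+\theta_{p,j}/p^2$ with $|\theta_{p,j}|\le C(1+|z|)^{\tau_1}$. There $|Z|\le C\ln p$, so the expansion $p\ln(1+s/p)=s-s^2/(2p)+O(|s|^3/p^2)$ can be used exactly as in the proof of Proposition \ref{key-2}. This yields
\[
F_{p,j}=e^{Z}\Big[1+\frac1p\Big(\eta_0-\frac{Z^2}{2}\Big)\Big]+O\Big(\frac{e^{Z}(1+|z|)^{2\tau_1}\ln^{6}(2+|z|)}{p^{2}}\Big).
\]
Integrating, the remainder is $O(p^{-2})$, since $e^Z\le C(1+|z|)^{-8}$.

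On the outer annulus, Lemma \ref{llma} gives $F_{p,j}\le C(1+|z|)^{-8+\sigma}$. Hence the contribution of the annulus, and likewise the tails of $\int e^{Z}$ and $\int e^{Z}(|\eta_0|+Z^2)$ outside $B_{p^{\delta/8}}$, are $O(p^{-(4-\sigma)\delta/8})$. Choosing the cut-off radius as a suitable power of $p$ (or larger, e.g.\ $e^{\beta p}$ combined with the logarithmic bounds) makes these $O(p^{-2+\delta})$.

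Combining the two regions and using $\int_{\R^4}e^{Z}=64\pi^2$, we obtain
\[
\int_{B_{d/\varepsilon_{p,j}}(0)}F_{p,j}=64\pi^2+\frac1p\int_{\R^4}e^{Z}\Big(\eta_0-\frac{Z^2}{2}\Big)+O\Big(\frac1{p^{2-\delta}}\Big).
\]

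\textbf{Step 2: reduction of the coefficient.} It remains to show
\[
\int_{\R^4}e^{Z}\eta_0-\frac12\int_{\R^4}Z^2e^{Z}=-\frac{13}{3}\cdot 64\pi^2 .
\]
Since $\eta_0$ is not explicit, I would avoid computing it. Set $\Lambda Z:=4+x\cdot\nabla Z$, which solves $\Delta^2\Lambda Z=e^{Z}\Lambda Z$ by Lemma \ref{llm}. Multiply \eqref{6-4-2} by $\Lambda Z$ and integrate over $B_R$. Green's second identity for $\Delta^2$ cancels the volume terms $\int e^{Z}\eta_0\Lambda Z$, leaving
\[
-\frac12\int_{B_R}Z^2e^{Z}\Lambda Z=\text{(boundary terms on }\partial B_R).
\]
We have $\Lambda Z\to-4$ at infinity, with derivatives decaying like $|x|^{-1}$. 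Using the bound \eqref{boundw0} for $\eta_0$, together with the corresponding bounds on its derivatives from the integral representation $\eta_0=\int K(\cdot,z)\,e^{Z}(\eta_0-Z^2/2)\,dz$, the only surviving boundary term as $R\to\infty$ is $-4\int_{\partial B_R}\partial_\nu\Delta\eta_0$. This tends to $-4\int_{\R^4}\Delta^2\eta_0=-4\int_{\R^4}e^{Z}(\eta_0-Z^2/2)$. Therefore
\[
\int_{\R^4}e^{Z}\Big(\eta_0-\frac{Z^2}{2}\Big)=\frac18\int_{\R^4}Z^2e^{Z}(4+x\cdot\nabla Z)\,dx .
\]
This must then be computed explicitly in radial coordinates from the formula \eqref{Z(x)} for $Z$, by substituting $t=|x|/(8\sqrt6)$ (or the appropriate scale) and reducing to Beta-type integrals of $\ln^2(1+t)$ against rational weights. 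The expected value is $-\frac{13}{3}\cdot 64\pi^2$.

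\textbf{Main obstacle.} I expect the main obstacle to be Step 2. First, the boundary terms must be justified: the cancellation relies on $\eta_0$ behaving like $c\ln|x|+O(1)$ with derivative decay, which is sharper than \eqref{boundw0}. I would derive this from the logarithmic-kernel representation, by the same splitting $A_1,A_2$ used for $I_1(y)$ in Proposition \ref{key-1}. Second, the explicit radial integral must actually produce $13/3$. If the $\Lambda Z$ route gives boundary constants that are hard to control, the fallback is to use that $\eta_0$ is radial (the non-radial kernel components are excluded because $\nabla\eta_0(0)=0$), and then to integrate the radial ODE directly to compute the flux $\lim_{r\to\infty} r^3(\Delta\eta_0)'(r)$.
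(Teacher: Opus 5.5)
Your proposal is correct and is essentially the paper's argument: you expand $F_{p,j}$ on an inner ball via Proposition \ref{key-2}, control the annulus by Lemma \ref{llma}, and evaluate $\int_{\R^4}e^Z(\eta_0-\frac{Z^2}{2})$ by pairing \eqref{6-4-2} with the radial dilation kernel element; the paper's $\Psi=\frac{1-|y|^2}{1+|y|^2}$ in the variable $y=x/\sqrt{8\sqrt6}$ is exactly $\frac14\Lambda Z$, the paper controls the boundary terms through radial symmetry of $\eta_0$ and the ODE asymptotics $\eta_0=A_0\ln r+O(1)$ (your fallback), and your identity $\int_{\R^4}e^Z(\eta_0-\frac{Z^2}{2})=\frac18\int_{\R^4}Z^2e^Z\Lambda Z$ does evaluate to $-\frac{13}{3}\cdot 64\pi^2$. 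The one thing to fix is the cut-off radius: take it to be $p$ as the paper does (any fixed power $p^{\kappa}$ with $\kappa\ge\frac12$ would do), because $p^{\delta/8}$ leaves an annulus contribution of size $p^{-(4-\sigma)\delta/8}\gg p^{-2+\delta}$, while on $B_{e^{\beta p}}(0)$ the bound \eqref{boundw0} no longer makes $\eta_0/p$ small, so the exponential expansion breaks down.
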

\begin{proof}
From \eqref{Fpj} and Lemma \ref{llma}, for any small $\delta>0$, we have $0\le F_{p,j}(y)\le C_\delta(1+|y|)^{-8+\sigma}$ and $|y|\le \frac d{\varepsilon_{p,j}}$.
Thus, it follows that
\[
\begin{aligned}
\int_{B_{d/\varepsilon_{p,j}}(0)\setminus B_p(0)}F_{p,j}(y)\,dy
&\le
C\int_{|y|>p}(1+|y|)^{-8+\delta}\,dy
&\le
C\int_p^\infty r^{3}r^{-8+\delta}\,dr
&=
C p^{-4+\delta},
\end{aligned}
\]
then we have \[
\int_{B_{d/\varepsilon_{p,j}}(0)\setminus B_p(0)}F_{p,j}(y)\,dy
=
O(p^{-2+\delta}).
\]
By Proposition \ref{key-2}, we have $Z_{p,j}
=
Z+\frac{\eta_0}{p}+\frac{\theta_{p,j}}{p^2}$.
For each $z \in B_p(0)$, it follows that 
\[
|Z(y)|
\le C\ln(2+|y|)
\le C\ln p,
\]
\[
|\eta_0(y)|\le C(1+|y|)^\tau\le Cp^\tau,
\]
\[
|\theta_{p,j}(y)|\le C(1+|y|)^\tau\le Cp^\tau.
\]
Hence, \[
|Z_{p,j}(y)|
\le
C\ln p
+
Cp^{\tau-1}
+
Cp^{\tau-2}
\le
C\ln p.
\]By $p$ large enough,  it follows that
\begin{equation}\label{taile1}
    p\ln\left[\left(1+\frac{Z_{p,j}}{p}\right)^+\right]=
    p\ln\left(1+\frac{Z_{p,j}}{p}\right)
=
Z_p-\frac{Z_{p,j}^2}{2p}
+
\frac{Z_{p,j}^3}{3p^2}
+
O\left(\frac{|Z_{p,j}|^4}{p^3}\right).
\end{equation}
By Proposition \ref{key-2}, we have 
\begin{equation}\label{Zp1}
    Z_{p,j}
=
Z+\frac{\eta_0}{p}+\frac{\theta_{p,j}}{p^2},
\end{equation}
\begin{equation}\label{Zp2}
    \frac{Z_{p,j}^2}{2p}
=
\frac{Z^2}{2p}
+
\frac{Z\eta_0}{p^2}
+
O\left(
\frac{\eta_0^2+|Z||\theta_{p,j}|}{p^3}
\right)
\end{equation}and
\begin{equation}\label{Zp3}
    \frac{Z_{p,j}^3}{3p^2}
=
\frac{Z^3}{3p^2}
+
O\left(
\frac{|Z|^2|\eta_0|+|\eta_0|^2+|Z|^2|\theta_{p,j}|}
{p^3}
\right).
\end{equation}
By substituting \eqref{Zp1}, \eqref{Zp2} and \eqref{Zp3} into \eqref{taile1}, we obtain 
\begin{equation}\label{taile2}
    p\ln\left(1+\frac{Z_{p,j}}{p}\right)
=
Z
+
\frac1p\left(\eta_0-\frac12Z^2\right)
+
E_{p,j}(y),
\end{equation}where
\[
|E_{p,j}(y)|
\le
\frac C{p^2}
\left(
1+|\theta_{p,j}|+|Z||\eta_0|+|Z|^3+\eta_0^2
\right)
+
O\left(\frac{|Z_{p,j}|^4}{p^3}\right).
\]
But we have \[
\frac{|Z_{p,j}|^4}{p^3}
\le
C\frac{\ln^4p}{p^3},
\] then 
\begin{equation}\label{Eqj}
    |E_{p,j}(y)|
\le
\frac C{p^2}H_{p,j}(y),
\end{equation}
where
\[
H_{p,j}(y):=
1+|\theta_{p,j}(y)|+|Z(y)||\eta_0(y)|+|Z(y)|^3+\eta_0^2(y)+\ln^4(2+|y|).
\]
Since\[
|\eta_0(y)|+|\theta_{p,j}(y)|
\le
C(1+|y|)^\tau,
\]
\[
|Z(y)|\le C\ln(2+|y|),
\] it follows that \[
H_{p,j}(y)
\le
C\left[
1+(1+|y|)^\tau
+(1+|y|)^{2\tau}
+\ln^4(2+|y|)
\right]
\] and then 
\begin{equation}\label{jihen1}
    \int_{\mathbb R^4}e^Z H_{p,j}(y)\,dy\le C.
\end{equation}
Now, by \eqref{taile2}, it follows that
\[
F_{p,j}
=
\exp\left[
p\ln\left(1+\frac{Z_{p,j}}{p}\right)
\right]
=
e^Z\exp\left(\frac{g}{p}+E_{p,j}\right).
\]where\[
g(y):=\eta_0(y)-\frac12Z^2(y).
\]Since for each $y\in B_p(0)$, we have \[
\left|\frac{g(y)}{p}+E_{p,j}(y)\right|\to0
\] and then it follows that
\begin{equation}\label{FpO}
    F_{p,j}
=
e^Z\left(1+\frac g p\right)
+
O\left[
e^Z\left(\frac{g^2}{p^2}+|E_{p,j}|\right)
\right].
\end{equation}
By \[
|\eta_0(y)|\le C(1+|y|)^\tau,
\qquad
|Z(y)|\le C\ln(2+|y|),
\] we have \[
g^2
\le
C\left((1+|y|)^{2\tau}+\ln^4(2+|y|)\right),
\] and then
\begin{equation}\label{jifen2}
    \int_{\mathbb R^4}e^Z g^2\,dy\le C.
\end{equation}
By the above computations, it follows that
\begin{equation}\label{jifen3}
    \int_{B_p(0)}e^Z\left(\frac{g^2}{p^2}+|E_{p,j}|\right)\,dy
\le
\frac C{p^2}.
\end{equation}
Together with \eqref{FpO} and \eqref{jifen3}, it follows that
\begin{equation}\label{Fji1}
    \int_{B_p(0)}F_{p,j}(y)\,dy
=
\int_{B_p(0)}e^Z\,dy
+
\frac1p\int_{B_p(0)}e^Zg\,dy
+
O(p^{-2}).
\end{equation}
By direct computations, we have \[
\int_{\mathbb R^4\setminus B_p(0)}e^Z\,dy
\le
C\int_p^\infty r^{-8}r^3\,dr
=
Cp^{-4}.\]
Since \[
|g(y)|
\le
C(1+|y|)^\tau+C\ln^2(2+|y|).
\] it follows that \[
\begin{aligned}
\int_{\mathbb R^4\setminus B_p(0)}e^Z|g|\,dy
&\le
C\int_p^\infty
r^{-8}
\left(r^\tau+\ln^2r\right)
r^3\,dr \\
&\le
C p^{-4+\tau}
+
Cp^{-4}\ln^2p.
\end{aligned}
\] and then 
\[
\frac1p
\int_{\mathbb R^4\setminus B_p(0)}e^Z|g|\,dy
=
O(p^{-5+\tau}\ln^2p)
=
O(p^{-2+\delta}).
\]
Thus, \begin{equation}\label{Fjiji}
    \int_{B_p(0)}F_{p,j}(y)\,dy
=
\int_{\mathbb R^4}e^Z\,dy
+
\frac1p
\int_{\mathbb R^4}e^Zg\,dy
+
O(p^{-2+\delta}).
\end{equation}
Now, we define \begin{equation}\label{AAA}
    A:=
\int_{\mathbb R^4}e^Z
\left(\eta_0-\frac12Z^2\right)\,dy.
\end{equation} See the  Appendix \ref{76}, we get
\begin{equation}\label{daiA}
    A
=
-\int_{\mathbb R^4}\Psi S\,dy
=
-\frac{416}{3}|\mathbb S^3|.
\end{equation}
Substituting \eqref{daiA} into \eqref{Fjiji}, we obtain\[
\int_{\frac{d}{\varepsilon_{p,j}}(0)}\Big[\Big(1+\frac{ Z_{p,j}(z)}{p}
  \Big)^+\Big]^pdz
   = 64{\pi}^{2}\left(1-\frac{13}{3p}\right)+O\Big(\frac{1}{p^{2-\delta}}\Big).
\]

\end{proof}

\subsection{Proof of Theorem \ref{th1}}\label{sub:proofThm11}
\begin{proof}
By \eqref{Prop:5-12-52}, and define \[
C_{p,j}:=
\int_{B_d(x_{p,j})}(u^+_{p})^p(x) dx,
\]
then we have
\begin{equation}\label{3.553.55}
    \begin{split}
        C_{p,j}
&=
u_p(x_{p,j})^p\varepsilon _{p,j}^4
\int_{B_{d/\varepsilon _{p,j}}(0)}
\Big[\Big(1+\frac{ Z_{p,j}(z)}{p}
  \Big)^+\Big]^p\,dy\\
&=
\frac{u_p(x_{p,j})}p
\int_{B_{d/\varepsilon _{p,j}}(0)}
\Big[\Big(1+\frac{ Z_{p,j}(z)}{p}
  \Big)^+\Big]^p\,d y\\
&=
\frac{u_p(x_{p,j})}p
64\pi^2
\left(
1-\frac{13}{3p}
\right)
+
O(p^{-3+\delta}). 
    \end{split}
\end{equation}
By the Green representation, together with \eqref{11-14-03N}, we have
\begin{equation}\label{s5-8-3}
\begin{split}
u_p(x_{p,j})=& \int_{\Omega}G(y,x_{p,j})
 (u^+_{p})^p(y)\,dy\\=&
 \sum^k_{l=1}\int_{B_d(x_{p,l})}G(y,x_{p,j})(u^+_{p})^p(y)\,dy
+\int_{\Omega\backslash  \bigcup^k_{l=1}B_{d}(x_{p,l})}G(y,x_{p,j})
(u^+_{p})^{p}(y)\,dy\\=&
 \sum^k_{l=1}\int_{B_d(x_{p,l})}G(y,x_{p,j})(u^+_{p})^p(y)\,dy
+O\Big(\frac{C^p}{p^p}\Big).
\end{split}
\end{equation}
After performing the scaling and using the properties of the Green function, we have
 \begin{equation}\label{s5-8-4}
 \begin{split}
  \int_{B_d(x_{p,j})}&G\big(y,x_{p,j}\big)(u^+_{p})^p(y)\,dy
         \\=&u_p^p (x_{p,j}) \big(\varepsilon_{p,j}\big)^4
         \int_{B_{\frac{d}{\varepsilon_{p,j}}}(0)}G
         \big(x_{p,j},x_{p,j}+\varepsilon_{p,j}z\big)
         \Big[\Big(1+\frac{ Z_{p,j}(z)}{p}
  \Big)^+\Big]^pdz
         \\=&
         \frac{u_p(x_{p,j})}{p} \int_{B_{\frac{d}{\varepsilon_{p,j}}}(0)}H\big(x_{p,j},x_{p,j}+
         \varepsilon_{p,j}z\big)\Big[\Big(1+\frac{ Z_{p,j}(z)}{p}
  \Big)^+\Big]^pdz
          \\&-\frac{u_p(x_{p,j})}{8\pi^2 p}
  \int_{B_{\frac{d}{\varepsilon_{p,j}}}(0)}\ln |z|\Big[\Big(1+\frac{ Z_{p,j}(z)}{p}
  \Big)^+\Big]^pdz
          \\&-\frac{u_p(x_{p,j})\ln \varepsilon_{p,j}}{8\pi^2  p}
  \int_{B_{\frac{d}{\varepsilon_{p,j}}}(0)}\Big[\Big(1+\frac{ Z_{p,j}(z)}{p}
  \Big)^+\Big]^pdz.
 \end{split}\end{equation}
Now using Proposition \ref{Prop:5-12-52}, we find
\begin{equation}\label{s6-4-11}
\begin{split}
 \int_{B_{\frac{d}{\varepsilon_{p,j}}}(0)}& H(x_{p,j},x_{p,j}+
\varepsilon_{p,j}z)\Big[\Big(1+\frac{ Z_{p,j}(z)}{p}
  \Big)^+\Big]^pdz\\=&
  H(x_{p,j},x_{p,j})\int_{B_{\frac{d}{\varepsilon_{p,j}}}(0)} \Big[\Big(1+\frac{ Z_{p,j}(z)}{p}
  \Big)^+\Big]^pdz
  +O\Big(\varepsilon_{p,j}\Big) \\
  =&64\pi^2\big(1-\frac{13}{p}\big)  H(x_{p,j},x_{p,j})+o\Big(\frac{1}{p}\Big)=64\pi^2  H(x_{\infty,j},x_{\infty,j})+O\big(\frac{1}{p}\big).
  \end{split}
\end{equation}
Also by the classical dominated convergence theorem and Lemma \ref{llma}, we know
\begin{equation}\label{6-4-12}
\begin{split}
 \lim_{p\to \infty}&\int_{B_{\frac{d}{\varepsilon_{p,j}}}(0)}\ln |z|\Big[\Big(1+\frac{ Z_{p,j}(z)}{p}
  \Big)^+\Big]^pdz\\=&\int_{\R^2} \ln |z| e^{Z(z)}
  =\int_{\mathbb R^4}
\ln|z|
\left(
1+\frac{|z|^2}{8\sqrt6}
\right)^{-4}dz
  \\=&2\pi^2
\int_0^\infty
r^3\ln r
\left(
1+\frac{r^2}{8\sqrt6}
\right)^{-4}dr
\\=&\frac{\pi^2({8\sqrt6})^2}{2}
\left[
\ln({8\sqrt6})
\int_0^\infty
\frac{t}{(1+t)^4}\,dt
+
\int_0^\infty
\frac{t\ln t}{(1+t)^4}\,dt
\right]
\\=&32\pi^2\ln(8\sqrt6).
\end{split}\end{equation}
Then from Proposition \ref{Prop:5-12-52}, \eqref{s5-8-4}, \eqref{s6-4-11} and  \eqref{6-4-12},  we know
  \begin{equation}\label{sad5-8-4}
 \begin{aligned}
&\int_{B_d(x_{p,j})}
G(y,x_{p,j})(u^+_{p})^p(y)\,dy  \\
&=
\frac{u_p(x_{p,j})}{p}
\Bigg[
64\pi^2
H(x_{\infty,j},x_{\infty,j})-4\ln(8\sqrt6)+
O(\frac{1}{p})\Bigg]   \\
&\qquad
-\frac{8u_p(x_{p,j})\ln\varepsilon_{p,j}}{p}
\left(
1-\frac{13}{3p}+
O(p^{-2+\delta})
\right).
\end{aligned}\end{equation}
 Next for $l\neq j$, similar to \eqref{s6-4-11}, it follows
  \begin{equation}\label{sab5-8-4}
 \begin{split}
\begin{aligned}
\int_{B_d(x_{p,\ell})}
G(y,x_{p,j})(u^+_{p})^p(y)\,dy
&=
G(x_{p,\ell},x_{p,j})
\int_{B_d(x_{p,\ell})}(u^+_{p})^p(y)\,dy
+
O\left(\frac{\varepsilon_{p,\ell}}p\right)  \\
&=
\frac{u_p(x_{p,\ell})}{p}
\left[
64\pi^2G(x_{p,\ell},x_{p,j})
+
O\left(\frac1p\right)
\right] \\
&=
\frac{u_p(x_{p,\ell})}{p}
\left[
64\pi^2G(x_{\infty,\ell},x_{\infty,j})
+
O\left(\frac1p\right)
\right].
\end{aligned}
 \end{split}\end{equation}
 and  recalling that $u_p(x_{p,l})\rightarrow \sqrt{e}$ for $l=1,\cdots,k$, we know
 \begin{equation}\label{sac5-8-4}
 \frac{u_p(x_{p,l})}{u_p(x_{p,j})}\to 1~\mbox{for all}~l\neq j.
 \end{equation}
 Hence combining
 \eqref{sab5-8-4} and \eqref{sac5-8-4}, we get

\[
\int_{\Omega\setminus\bigcup_{\ell=1}^kB_d(x_{p,\ell})}
G(y,x_{p,j})(u^+_{p})^p(y)\,dy
=
o(p^{-2}).
\]
Therefore, we obtain
\begin{equation}\label{gaozhong}
    \begin{aligned}
u_p(x_{p,j})
&=
\frac{u_p(x_{p,j})}{p}
\Bigg[
-8
\left(
1-\frac{13}{3p}
\right)
\ln\varepsilon_{p,j}   \\
&\qquad
+
64\pi^2
\left(
H(x_{\infty,j},x_{\infty,j})
+
\sum_{\ell\ne j}G(x_{\infty,\ell},x_{\infty,j})
\right)  \\
&\qquad
-4\ln(8\sqrt6)
+
O(p^{-1+\delta})
\Bigg].
\end{aligned}
\end{equation}
Dividing both sides by $u_p(x_{p,j})$ and multiplying by $p$, we have
\begin{equation}\label{p}
    p
=
-8
\left(
1-\frac{13}{3p}
\right)
\ln\varepsilon_{p,j}
+
64\pi^2
\left(
H(x_{\infty,j},x_{\infty,j})
+
\sum_{\ell\ne j}G(x_{\infty,\ell},x_{\infty,j})
\right)
-4\ln(8\sqrt6)
+
O(p^{-1+\delta}).
\end{equation}
Then  from \eqref{s5-8-3}, \eqref{sad5-8-4}, \eqref{gaozhong} and \eqref{p}, we have
 \begin{equation}\label{5-7-51}
 \begin{split}
 \ln \varepsilon_{p,j}=-
 \frac{p}{8} \left(\frac{1+\frac{1}{p}\Big(-64\pi^2  \Psi_{k,j}(x_{\infty})+4\ln(8\sqrt6)\Big) +O\big(\frac {1}{p^2}\big)}{1-\frac{13}{p} +O\big(\frac{1}{p^{2-\delta}}\big)}\right) ~\,\mbox{with}\,\,x_{\infty}:=\big(x_{\infty,1},\cdots,x_{\infty,k}\big),
 \end{split}\end{equation}
 where $\Psi_{k,j}$ is the Kirchhoff-Routh function in \eqref{stts}.
Taking the definition of  $\varepsilon_{p,j}$ into \eqref{5-7-51}, we deduce that
\[
\begin{aligned}
\ln u_p(x_{p,j})
&=
\frac{p}{2(p-1)}
\left(
\frac{
1+\frac1p\left[-64\pi^2\Psi_{k,j}(x_{\infty})+4\ln(8\sqrt6)\right]
+
O(p^{-2+\delta})
}{
1-\frac{13}{3p}
+
O(p^{-2+\delta})
}
\right)
-
\frac{\ln p}{p-1}  \\
&=
\frac{p}{2(p-1)}
\left[
1+
\frac1p
\left(
-64\pi^2\Psi_{k,j}(x_{\infty})
+
4\ln(8\sqrt6)
+
\frac{13}{3}
\right)
+
O(p^{-2+\delta})
\right]
-
\frac{\ln p}{p-1} \\
&=
\frac12
-\frac{\ln p}{p-1}
+
\frac1p
\left[
-32\pi^2\Psi_{k,j}(x_{\infty})
+
2\ln(8\sqrt6)
+
\frac83
\right]
+
O(p^{-2+\delta}).
\end{aligned}
\]
which implies \eqref{5-7-52}.

\vskip 0.1cm

%

Next, we have
\[
\begin{split}
\e_{p,j}=&\Big(p\big(u_{p}(x_{p,j})\big)^{p-1}\Big)^{-1/4}= { p}^{-1/4} e^{ -\frac{p-1}4 \ln u_{p}(x_{p,j})  }\\
=&{ p}^{-1/4} e^{ -\frac{p-1}4 \Bigl( \frac{1}{2}+  \frac{ 1}{ p}\big(-32{\pi}^2 \Psi_{k,j}(x_{\infty})+2\ln(8\sqrt{6})+\frac{8}{3} \big) -\frac{\ln p}{p-1} +O\big(\frac{1}{p^{2-\delta}}\big)\Bigr) }\\
=&   e^{-\frac{p-1}4}\cdot
 e^{ -\frac{p-1}{p} \Bigl( -8{\pi}^2 \Psi_{k,j}(x_{\infty})+\frac{\ln(8\sqrt{6})}{2}+\frac{2}{3}  +O\big(\frac{1}{p^{1-\delta}}\big)\Bigr) }\\=&
 e^{-\frac{p}4}\Bigl(
 e^{ 8{\pi}^2 \Psi_{k,j}(x_{\infty})-\frac{\ln (8\sqrt{6})}{2}-\frac{13}{24} }+O\big(\frac{1}{p^{1-\delta}}\big)\Bigr).
\end{split}
\]
So we have proved \eqref{nn3-29-03}. Clearly,
 \eqref{3-29-03} follows from \eqref{nn3-29-03}. Finally, \eqref{lst} and \eqref{lst1} follow by Proposition \ref{prop3-2}.
\end{proof}

\vskip 0.1cm

\subsection{Further expansions}\label{subsection:expansionu}
$\,$  \vskip 0.2cm

Thanks to Proposition \ref{key-1}, we can improve the expansion of $u_p$ obtained in Lemma \ref{prop3-1}. This result will allow  to get the estimate \eqref{daluo-gil} in Proposition \ref{lem3-8}. 
\begin{Lem}\label{prop:expansionupwithdelta}
For any fixed small $d>0$, it holds
\begin{equation}\label{dluo-1}
u_p(x)= \sum^k_{j=1}C_{p,j}G(x_{p,j},x)+
O\Big(\sum^k_{j=1}\frac{\varepsilon_{p,j}}{p^{2-\delta}}\Big)\,\,
~\mbox{in}~C^1\Big(\Omega\backslash \displaystyle\bigcup^k_{j=1} B_{2d}(x_{p,j})\Big),
\end{equation}
where $\delta$ is a small fixed positive  constant and $C_{p,j}$ are the same constants in
\eqref{def_Cpj}.
\end{Lem}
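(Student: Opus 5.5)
The plan is to rerun the proof of Lemma \ref{prop3-1}, now using the sharper information on $Z_{p,j}$ from Propositions \ref{key-1}, \ref{prop3-2} and \ref{key-2}. For $x\in\Omega\backslash\bigcup_j B_{2d}(x_{p,j})$ I use Green's representation and the same splitting \eqref{5-8-11}. The part outside $\bigcup_j B_d(x_{p,j})$ is $O(C^p/p^p)$, hence negligible. So everything reduces to the Taylor remainder
\[
\int_{B_d(x_{p,j})}\big(G(y,x)-G(x_{p,j},x)\big)(u_p^+)^p(y)\,dy .
\]
After scaling, exactly as in \eqref{5-8-14}, this equals
\[
\frac{\varepsilon_{p,j}u_p(x_{p,j})}{p}\int_{B_{d/\varepsilon_{p,j}}(0)}\langle\nabla G(x_{p,j},x),z\rangle F_{p,j}(z)\,dz+O\Big(\frac{\varepsilon_{p,j}^2}{p}\int|z|^2F_{p,j}\Big).
\]
By Lemma \ref{llma}, $F_{p,j}\le C(1+|z|)^{-8+\sigma}$, so the second term is $O(\varepsilon_{p,j}^2/p)$, which is far smaller than $\varepsilon_{p,j}p^{-2+\delta}$.

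The key step is to show that the first moment $\int_{B_{d/\varepsilon_{p,j}}}z_h F_{p,j}(z)\,dz$ is $O(p^{-1+\delta})$ rather than merely $o(1)$. I would repeat the argument of Proposition \ref{Prop:5-12-52}, now with the weight $z_h$. First, the region $|z|>p$ contributes at most $C\int_p^\infty r^{4-8+\sigma}\,dr=O(p^{-3+\sigma})$. On $B_p(0)$ I use the expansion \eqref{FpO},
\[
F_{p,j}=e^Z\Big(1+\frac{\eta_0-\frac12Z^2}{p}\Big)+O\Big(e^Z\big(g^2p^{-2}+|E_{p,j}|\big)\Big).
\]
The term $e^Z$ is radial, so its first moment over $B_p(0)$ vanishes. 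The term $\frac1p e^Z(\eta_0-\frac12Z^2)$ is $O(1/p)$ after integration against $|z|$, because $|\eta_0|\le C(1+|z|)^\tau$ and $e^Z\le C(1+|z|)^{-8}$ make $\int |z|e^Z(|\eta_0|+Z^2)$ finite. (If $\eta_0$ is radial, which one may arrange since the source term is radial, this moment even vanishes, but $O(1/p)$ already suffices.) The remainder term, weighted by $|z|$, is $O(p^{-2})$ by the bounds \eqref{Eqj}–\eqref{jifen2}. Altogether the first moment is $O(1/p)\subset O(p^{-1+\delta})$, and therefore the Taylor remainder is $O(\varepsilon_{p,j}p^{-2+\delta})$ uniformly in $x$.

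For the $C^1$ statement I differentiate the representation in $x$. The argument is identical with $\nabla_x G$ and $\nabla_x\nabla_y G$ in place of $G$ and $\nabla_y G$; these are bounded uniformly because $|x-y|\ge d$. This yields \eqref{dluo-1}.

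The main obstacle I expect is the uniformity and the integrability of the error terms once the weight $|z|$ is included. Specifically, I need $\int_{B_p}|z|e^Z H_{p,j}\le C$ with $H_{p,j}$ as in \eqref{Eqj}. This holds because $|\theta_{p,j}|\le C(1+|z|)^{\tau_1}$ by Proposition \ref{key-2} and $\tau,\tau_1$ are small, so the integrand decays like $(1+|z|)^{-7+2\tau}\cdot|z|^3$ in the radial variable. The $\delta$ loss in the statement simply absorbs these polynomial and logarithmic factors, together with the tail contributions.
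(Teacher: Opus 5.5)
Your proposal is correct and follows essentially the paper's route. Like the paper, you rerun the proof of Lemma \ref{prop3-1}, kill the leading $e^Z$ contribution to the first moment by oddness, show the $1/p$-correction contributes $O(1/p)$, and control $|z|>p$ via Lemma \ref{llma}, so the Taylor remainder becomes $O(\varepsilon_{p,j}/p^2)$. The only difference is that you invoke the second-order expansion \eqref{FpO} (hence Proposition \ref{key-2}) on all of $B_p(0)$. The paper instead uses only $Z_{p,j}=Z+\eta_{p,j}/p$ with Proposition \ref{key-1} on $B_{R_p}(0)$, $R_p=p^\alpha$, plus a separate tail bound on $B_p\setminus B_{R_p}$ that tacitly needs $\alpha(3-\sigma)>1$; your version avoids that extra split.
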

\begin{proof}
We follow the scheme of the proof of Lemma \ref{prop3-1}, the main point is to improve \eqref{d5-8-14}. First,
for Lemma \ref{llma}, we have
  \begin{equation}\label{dluo-a1}
  \begin{split}
 &\int_{B_\frac{d}{\varepsilon_{p,j}}(0)\backslash B_p(0)}\big\langle\nabla G(x_{p,j},x),z\big\rangle\Big[\Big(1+\frac{ Z_{p,j}(z)}{p}
  \Big)^+\Big]^pdz  \leq
   \int_{B_\frac{d}{\varepsilon_{p,j}}(0)\backslash B_p(0)} |z|\cdot e^{Z_{p,j}(z) }dz\\=& O\Big(
   \int_{B_\frac{d}{\varepsilon_{p,j}}(0)\backslash B_p(0)}\frac{|z|}{1+|z|^{8-\delta}}dz\Big)=O\Big(\frac{1}{p^{3-\delta}}\Big)~\,\mbox{for some}~\delta\in (0,1).
  \end{split}
  \end{equation}
Next, we find
\begin{equation}\label{dadsst}
\frac{|Z_{p,j}|}{p},\,\frac{|Z_{p,j}|^2}{p}\to 0~\,\mbox{uniformly in}\, B_p(0).
\end{equation}Let \(R_p=p^\alpha\), where \(0<\alpha<1/2\). We claim that
\[
1+\frac{Z_{p,j}(z)}p>0
\qquad \text{for } z\in B_{R_p}(0)
\]
provided \(p\) is sufficiently large. Indeed, by the refined inner
estimate,
\[
\sup_{|z|\le R_p}|Z_{p,j}(z)-Z(z)|=o(p).
\]
Moreover, since
\[
Z(z)=-4\ln\left(1+\frac{|z|^2}{8\sqrt6}\right),
\]
we have, for \(|z|\le R_p=p^\alpha\),
\[
Z(z)\ge -C\ln p.
\]
Consequently,
\[
Z_{p,j}(z)
\ge
Z(z)-|Z_{p,j}(z)-Z(z)|
\ge
-C\ln p-o(p).
\]
Since \(C\ln p+o(p)=o(p)\), it follows that
\[
Z_{p,j}(z)>-p
\qquad \text{for } z\in B_{R_p}(0)
\]
and \(p\) sufficiently large. Therefore
\[
1+\frac{Z_{p,j}(z)}p>0
\qquad \text{for } z\in B_{R_p}(0).
\]
Hence, in \(B_{R_p}(0)\),
\[
\Big[\Big(1+\frac{ Z_{p,j}(z)}{p}
  \Big)^+\Big]^p
=
\left(1+\frac{Z_{p,j}(z)}p\right)^p .
\]
Hence using that $Z_{p,j}=Z+\frac{\eta_{p,j}}{p}$ by definition \eqref{dsa} and Taylor's expansion, it follows
  \begin{equation*}
  \begin{split}
 \int_{ B_{R_p}(0)}& \big\langle\nabla G(x_{p,j},x),z\big\rangle \Big[\Big(1+\frac{ Z_{p,j}(z)}{p}
  \Big)^+\Big]^pdz\\ =& \int_{  B_{R_p}(0)} \big\langle\nabla G(x_{p,j},x),z\big\rangle e^{ p\ln \Big(1+\frac{ Z_{p,j}(z)}{p}
  \Big) }dz\\=&
  \int_{ B_{R_p}(0)} \big\langle\nabla G(x_{p,j},x),z\big\rangle e^{Z_{p,j}(z)}\Big(1-\frac{Z^2_{p,j}(z)}{2p}+o\big(\frac{Z^2_{p,j}(z)}{p}\big) \Big)\,dz \\=&
  \int_{ B_{R_p}(0)}\big\langle\nabla G(x_{p,j},x),z\big\rangle  e^{Z(z)}\Big(1+\frac{\eta_{p,j}(z)}{p}+o\big(\frac{|\eta_{p,j}(z)|}{p}\big) \Big)
  \Big(1-\frac{Z^2_{p,j}(z)}{2p}+o\big(\frac{Z^2_{p,j}(z)}{p}\big) \Big)\,dz.
  \end{split}
  \end{equation*}
Then by Lemma \ref{llma}, Proposition \ref{key-1}, Proposition \ref{prop3-2}, \eqref{dadsst} and the classical dominated convergence theorem, we can deduce that
  \begin{equation}\label{0.50}
  \begin{split}
\int_{ B_{R_p}(0)}& \big\langle\nabla G(x_{p,j},x),z\big\rangle \Big[\Big(1+\frac{ Z_{p,j}(z)}{p}
  \Big)^+\Big]^pdz=O
  \Big( \frac{1}{p}  \Big).
  \end{split}
  \end{equation}
On the other hand, by the uniform tail estimate for the rescaled positive-part nonlinearity and by the decay of \(e^Z\), we have
\begin{equation}\label{0.51}
    \begin{split}
        &\int_{B_p\setminus B_{R_p}(0)}
|z|
\Big[\Big(1+\frac{ Z_{p,j}(z)}{p}
  \Big)^+\Big]^p\,dz
+
\int_{\mathbb R^4\setminus B_{R_p}(0)}
|z|e^{Z(z)}\,dz
=
o\left(\frac1p\right).
    \end{split}
\end{equation}
Combining \eqref{0.50} and \eqref{0.51}, we get
\begin{equation}\label{llsad}
  \begin{split}
\int_{ B_{p}(0)}& \big\langle\nabla G(x_{p,j},x),z\big\rangle \Big[\Big(1+\frac{ Z_{p,j}(z)}{p}
  \Big)^+\Big]^pdz=O
  \Big( \frac{1}{p}  \Big).
  \end{split}
  \end{equation}
Hence by using \eqref{dluo-a1} and \eqref{llsad},  the estimate \eqref{d5-8-14} can be improved into
\begin{equation*}
\begin{split}
 \int_{B_{\frac{d}{\varepsilon_{p,j}}}(0)}
\big\langle\nabla G(x_{p,j},x),z\big\rangle\Big[\Big(1+\frac{ Z_{p,j}(z)}{p}
  \Big)^+\Big]^pdz=O\Big(\frac{1}{p}\Big), ~~\mbox{uniformly in $\Omega\backslash  \displaystyle\bigcup^k_{j=1} B_{2d}(x_{p,j})$}.
\end{split}
\end{equation*}
And the rest is the same as the proof of Lemma \ref{prop3-1}.
\end{proof}
By exploiting the expansion \eqref{dluo-1} of $u_{p}$ we can get the following key relation:
\begin{Prop}\label{lem3-8}
For $j=1,\cdots,k$, there exists a small fixed constant $\delta\in (0,1)$ such that
\begin{equation}\label{daluo-gil}
C_{p,j} \frac{\partial R(x_{p,j})}{\partial{x_i}}+2\displaystyle\sum^k_{m=1,m\neq j}
C_{p,m}
D_{x_i} G(x_{p,m},x_{p,j})
=O\Big(\frac{\varepsilon_{p}}{p^{2-\delta}}\Big),
\end{equation}
where $D_{x_i}G(y,x):=\frac{\partial G(y,x)}{\partial x_i}$, $C_{p,j}$ is defined in \eqref{def_Cpj} and ${\varepsilon}_{p}:=\max\big\{\varepsilon_{p,1},
\cdots,\varepsilon_{p,k}\big\}$.
\end{Prop}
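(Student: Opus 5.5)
The plan is to apply the local Pohozaev identity \eqref{aclp-1} on a fixed sphere $\partial B_\theta(x_{p,j})$ with $\theta=d$. The boundary side $Q_j(u_p,u_p)$ is then evaluated using the expansion \eqref{dluo-1} of Lemma \ref{prop:expansionupwithdelta}, together with the Green-function values of $Q_j$ computed in Proposition \ref{lem2-1}.

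\textbf{Right-hand side.} On $\partial B_d(x_{p,j})$ we know from \eqref{11-14-03N} that $u_p=O(1/p)$. Hence $(u_p^+)^{p+1}\le (C/p)^{p+1}$ there, and the right-hand side of \eqref{aclp-1} is $O(C^p p^{-p})$. This is far smaller than $\varepsilon_p/p^{2-\delta}$, since $\varepsilon_p\sim e^{-p/8}$ by \eqref{nn3-29-03}.

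\textbf{Left-hand side.} Write
\[
u_p=\sum_m C_{p,m}G(x_{p,m},\cdot)+w_p ,
\]
where $w_p=O\big(\varepsilon_p/p^{2-\delta}\big)$ by Lemma \ref{prop:expansionupwithdelta}. The form $Q_j$ involves derivatives up to order three of its arguments, while the expansion \eqref{dluo-1} is only stated in $C^1$. I would first upgrade it to $C^3$ on an annulus $\{d/2\le |x-x_{p,j}|\le 2d\}$. Both $u_p$ and $\sum_m C_{p,m}G(x_{p,m},\cdot)$ satisfy $\Delta^2(\cdot)=(u_p^+)^p$ up to a negligible error there, so their difference is biharmonic up to an $O(C^pp^{-p})$ right-hand side. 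Interior elliptic estimates then bound its $C^3$ norm on the sphere by its $C^1$ (even $C^0$) norm on a slightly larger annulus. Alternatively, one can redo the Taylor argument of Lemma \ref{prop:expansionupwithdelta} for higher derivatives of $G$ directly, which works since $x$ stays a distance $d$ from the concentration point.

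Expanding by bilinearity gives
\[
Q_j(u_p,u_p)=\sum_{m,s}C_{p,m}C_{p,s}\,Q_j\big(G(x_{p,m},\cdot),G(x_{p,s},\cdot)\big)+O\Big(\tfrac1p\cdot\tfrac{\varepsilon_p}{p^{2-\delta}}\Big)+O\Big(\tfrac{\varepsilon_p^2}{p^{4-2\delta}}\Big),
\]
using $C_{p,m}=O(1/p)$ from \eqref{luoluo1}. All terms in this sum are biharmonic away from the poles, so Lemma \ref{llas} lets us use the values in \eqref{aluo1}:
\begin{itemize}
\item the diagonal term gives $C_{p,j}^2\,\partial_iR(x_{p,j})$;
\item the mixed terms with $m\ne j,\ s=j$ and with $m=j,\ s\ne j$ together give $2C_{p,j}\sum_{m\ne j}C_{p,m}D_{x_i}G(x_{p,m},x_{p,j})$;
\item the terms with $s,m\ne j$ vanish.
\end{itemize}
Hence
\[
C_{p,j}\Big[C_{p,j}\partial_iR(x_{p,j})+2\sum_{m\ne j}C_{p,m}D_{x_i}G(x_{p,m},x_{p,j})\Big]=O\Big(\frac{\varepsilon_p}{p^{3-\delta}}\Big).
\]
Dividing by $C_{p,j}\sim 64\pi^2\sqrt e/p$ gives \eqref{daluo-gil}.

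\textbf{Main obstacle.} The crux is the derivative upgrade of the error $w_p$ to $C^3$ near $\partial B_d(x_{p,j})$ without losing the factor $\varepsilon_p/p^{2-\delta}$. I would handle it through the Green representation of $\partial^\alpha u_p$, with $|\alpha|\le3$, for $x$ on the sphere. Since $G(\cdot,x)$ is smooth on $B_{d/2}(x_{p,j})$ for such $x$, the improved first-moment bound from the proof of Lemma \ref{prop:expansionupwithdelta} (the estimate of order $O(1/p)$ for the first moment against $\nabla G$) and the second-moment bound of order $\varepsilon_p^2/p$ apply verbatim to $\partial_x^\alpha G$.
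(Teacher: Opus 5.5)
Your proposal is correct and follows essentially the same route as the paper: apply the Pohozaev identity \eqref{aclp-1}, evaluate $Q_j(u_p,u_p)$ through the expansion \eqref{dluo-1} and the Green-function values \eqref{aluo1} (with $C_{p,j}=O(1/p)$ controlling the cross terms), bound the right-hand side by $O(C^p/p^{p+2})$, and divide by $C_{p,j}$. Your upgrade of the error in \eqref{dluo-1} from $C^1$ to $C^3$ on the integration sphere (by interior estimates for the nearly biharmonic difference, or by differentiating the Green representation) supplies a step the paper uses without comment, as does your use of Lemma \ref{llas} to justify taking the $\theta\to0$ values at a fixed radius; both are valid.
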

\begin{proof}
Letting  $u=u_p$  in the Pohozaev identity \eqref{aclp-1},  we have
\begin{equation}\label{afd}
Q_{j}(u_p,u_p)=\frac{2}{p+1}\int_{\partial B_{\theta}(x_{p,j})}(u_{p}^+)^{p+1}\nu_i.
\end{equation}
By the expansion of $u_{p}$ in Lemma \ref{prop:expansionupwithdelta} and recalling that $C_{p,j}=O\big(\frac{1}{p}\big)$ by \eqref{luoluo1}, it holds
\begin{equation}\label{5-6-1}
\mbox{LHS of (\ref{afd})}=\sum^k_{s=1}\sum^k_{m=1}
C_{p,s} C_{p,m} Q_{j}\Big( G(x_{p,s},x), G(x_{p,m},x)\Big)+O\Big(\frac{\varepsilon_{p}}{p^{3-\delta}}\Big).
\end{equation}
Moreover from \eqref{11-14-03N}, it follows
\begin{equation}\label{5-6-2}
\mbox{RHS of (\ref{afd})}
=O\Big( \frac{C^p}{p^{p+2}}  \Big)=O\Big(\frac{\varepsilon_{p}}{p^{3-\delta}}\Big).
\end{equation}
Then we find \eqref{daluo-gil} by \eqref{aluo1}, \eqref{afd}, \eqref{5-6-1} and \eqref{5-6-2}.
\end{proof}

\vskip 0.8cm

\section{Non-degeneracy of the positive solutions }\label{section4}
In this section, We argue by contradiction in the proof of Theorem \ref{th1.1}. Assume that there exist two sequences  $p_m\rightarrow +\infty$, as $m\to +\infty$ such that
\begin{equation*}
\|\xi_{p_m}\|_{L^{\infty}(\Omega)}=1~~\mbox{and}~~\mathcal{L}_{p_m}\xi_{p_m}=0.
\end{equation*}
In order to shorten the notation, we replace $p_m$ by $p$.
\begin{Prop}\label{daprop3-2}
Let $\xi_{p,j}(x):=
\xi_{p}\big(\varepsilon_{p,j}x+x_{p,j}\big)$. Then by taking
a subsequence if necessary, we have
\begin{equation}\label{111}
 \xi_{p,j}(x)=a_j\frac{8\sqrt6-|x|^2}{8\sqrt6+|x|^2}+\sum_{i=1}^4\frac{b_{i,j}x_i}{8\sqrt6+|x|^2}+o(1)\,\,
 ~\mbox{in}~C^3_{loc}\big(\R^4\big)\,\,~\mbox{as}~p\rightarrow +\infty,
\end{equation}
 where $a_j$ and $b_{i,j}$ with $i=1,2,3,4$ and $j=1,\cdots,k$ are some constants.
\end{Prop}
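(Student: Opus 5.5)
The plan is a standard blow-up argument on the rescaled kernel functions. Since $\|\xi_p\|_{L^\infty(\Omega)}=1$, the functions $\xi_{p,j}$ satisfy $|\xi_{p,j}|\le 1$ on $\Omega_{p,j}$, and by the scaling $\varepsilon_{p,j}^4 p\,u_p(x_{p,j})^{p-1}=1$ they solve
\[
\Delta^2\xi_{p,j}(y)=\varepsilon_{p,j}^4\,p\,(u_p^+)^{p-1}(x_{p,j}+\varepsilon_{p,j}y)\,\xi_{p,j}(y)=\Big[\Big(1+\frac{Z_{p,j}(y)}{p}\Big)^+\Big]^{p-1}\xi_{p,j}(y)\quad\mbox{in }\Omega_{p,j}.
\]
By \eqref{5-8-2}, or more precisely by Proposition \ref{key-2}, the coefficient $V_{p,j}:=[(1+Z_{p,j}/p)^+]^{p-1}$ converges to $e^{Z}$ in $C^0_{loc}(\R^4)$. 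By Lemma \ref{llma} it also satisfies $0\le V_{p,j}(y)\le C_\sigma(1+|y|)^{-(8-\sigma)}$ for $|y|\le d/\varepsilon_{p,j}$. Because the right-hand side is bounded in $L^\infty_{loc}$, interior $W^{4,q}$ estimates for $\Delta^2$ and Schauder theory give uniform $C^{4,\alpha}$ bounds on compact sets. A diagonal subsequence then yields $\xi_{p,j}\to \xi_j$ in $C^3_{loc}(\R^4)$ (even in $C^4_{loc}$), where
\[
\Delta^2\xi_j=e^Z\xi_j \quad\mbox{in }\R^4,\qquad |\xi_j|\le 1.
\]

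Next we identify $\xi_j$ through Lemma \ref{llm}. This requires $\int_{\R^4}|\Delta\xi_j|^2<\infty$, which does not follow immediately from boundedness, and I expect it to be the main obstacle. I would proceed through the integral representation on $\R^4$. Since $e^Z\xi_j=O(|y|^{-8})$, the function
\[
w(y):=\int_{\R^4}-\frac{1}{8\pi^2}\ln\frac{|y-z|}{|z|}\,e^{Z(z)}\xi_j(z)\,dz
\]
is well defined and satisfies $\Delta^2 w=e^Z\xi_j$. Its growth is $w=c\ln|y|+O(1)$, and it obeys $|\Delta w|\le C|y|^{-2}$ and $|\nabla\Delta w|\le C|y|^{-3}$ at infinity. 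The difference $h=\xi_j-w$ is biharmonic with at most logarithmic growth, so by a Liouville theorem for polyharmonic functions of sublinear growth it is constant. Since $\xi_j$ is bounded, the logarithmic coefficient must vanish, i.e.\ $\int e^Z\xi_j=0$. Then $\Delta\xi_j=\Delta w=O(|y|^{-3})$ (the $|y|^{-2}$ term carries the coefficient $\int e^Z\xi_j$), which is square integrable at infinity in dimension four. If the $L^2$ bound on the Laplacian turns out to be delicate, I would instead invoke the bounded-solution version of the kernel classification from \cite{Baraket}, which reaches the same conclusion.

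Lemma \ref{llm} then gives $\xi_j=\tilde a_j(4+y\cdot\nabla Z)+\sum_i \tilde b_{i,j}\partial_i Z$. It remains to compute these generators from \eqref{Z(x)}, written in the form $Z=-4\ln(1+|y|^2/(8\sqrt6))$ that is used throughout the paper:
\[
\partial_iZ=-\frac{8y_i}{8\sqrt6+|y|^2},\qquad 4+y\cdot\nabla Z=4-\frac{8|y|^2}{8\sqrt6+|y|^2}=4\,\frac{8\sqrt6-|y|^2}{8\sqrt6+|y|^2}.
\]
Renaming $a_j=4\tilde a_j$ and $b_{i,j}=-8\tilde b_{i,j}$ gives exactly \eqref{111}, with the $o(1)$ understood in $C^3_{loc}(\R^4)$. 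The only genuinely delicate step is the growth and decay control that makes Lemma \ref{llm} applicable; the rest is compactness plus explicit computation.
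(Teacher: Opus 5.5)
Your proposal is correct and follows the same route as the paper: uniform local elliptic bounds for $\xi_{p,j}$, passage to the limit $\Delta^2\xi_j=e^Z\xi_j$ using the local convergence of $[(1+Z_{p,j}/p)^+]^{p-1}$ to $e^Z$, and then Lemma \ref{llm}. The paper only asserts that the limit satisfies \eqref{3-29-01}, so two of your steps supply details it leaves implicit: the integral-representation argument, which forces $\int_{\R^4}e^Z\xi_j=0$ and hence $\Delta\xi_j=O(|y|^{-3})\in L^2(\R^4)$, and the explicit computation of $\partial_iZ$ and $4+y\cdot\nabla Z$.
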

\begin{proof}
Since $\|\xi_{p,j}\|_{L^{\infty}(\R^2)}\leq 1$, standard local elliptic estimates imply that $$\xi_{p,j}\in C^{3,\alpha}_{loc}(\R^2)~\mbox{ and }~\|\xi_{p,j}\|_{C^{3,\alpha}_{loc}(\R^4)}\leq C~\mbox{for some}~\alpha \in (0,1),$$
 where $C$ is a constant independent of $j$ and $p$. Hence, after passing to a subsequence, we may assume that $$\xi_{p,j}\rightarrow\xi_{j}~\mbox{in}~C^{3}_{loc}(\R^4).$$
Moreover, $\xi_{p,j}$ satisfies
\begin{equation}\label{07-07-11}
{\Delta}^2 \xi_{p,j}=\Big[\Big(1+\frac{ Z_{p,j}(z)}{p}
  \Big)^+\Big]^{p-1} \xi_{p,j} \,\,~\mbox{ in}~\frac{\Omega-x_{p,j}}{\xi_{p,j}}.
\end{equation}
Using \eqref{5-8-2}, we obtain
\begin{equation*}
\Big[\Big(1+\frac{ Z_{p,j}(z)}{p}
  \Big)^+\Big]^{p-1}  \rightarrow e^{Z(x)}~\mbox{in}~C^3_{loc}(\R^4).
\end{equation*}
And then passing to the limit in \eqref{07-07-11}, we find that $\xi_j$ solves
\eqref{3-29-01},
which together with Lemma \ref{llm} implies \eqref{111}.
\end{proof}

\begin{Prop}\label{prop-2-2}
It holds
\begin{equation}\label{alaa1}
\begin{split}
\xi_{p}(x)=&\sum^k_{j=1}A_{p,j}G(x_{p,j},x)+8\pi^2
 \sum^k_{j=1}\sum^4_{i=1} b_{i,j}\varepsilon_{p,j}  \partial_iG(x_{p,j},x)
 +o\big( {\varepsilon}_{p}\big),
\end{split}\end{equation}
in $C^3
\Big(\Omega\backslash \bigcup^k_{j=1}B_{2d}(x_{p,j})\Big)$, where $d>0$ is any small fixed constant,
$\partial_iG(y,x):=\frac{\partial G(y,x)}{\partial y_i}$, \\ ${\varepsilon}_{p}:=\max\big\{\varepsilon_{p,1},
\cdots,\varepsilon_{p,k}\big\}$,
\begin{equation}\label{aaa5-7-4}
\begin{split}
A_{p,j}:=p
\int_{B_d(x_{p,j})} (u^+_{p})^{p-1}\xi_{p}
\end{split}\end{equation}
and $b_{i,j}$ are the constants in Proposition \ref{daprop3-2}.
\end{Prop}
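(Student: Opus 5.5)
The plan is to run the same argument as Lemma \ref{prop3-1}, now using the Green representation of $\xi_p$. Since $\xi_p\in H^2_0(\Omega)$ solves $\Delta^2\xi_p=p(u_p^+)^{p-1}\xi_p$, for $x\in\Omega\setminus\bigcup_j B_{2d}(x_{p,j})$ we have
\[
\xi_p(x)=\sum_{j=1}^k\int_{B_d(x_{p,j})}G(y,x)\,p(u_p^+)^{p-1}(y)\xi_p(y)\,dy+\int_{\Omega\setminus\bigcup_j B_d(x_{p,j})}G(y,x)\,p(u_p^+)^{p-1}\xi_p\,dy .
\]
The last integral is $O(p\,C^{p-1}/p^{p-1})$ by \eqref{11-14-03N} and $\|\xi_p\|_\infty=1$, so it is $o(\varepsilon_p)$, because $\varepsilon_p$ decays only like $e^{-p/8}$ by \eqref{nn3-29-03}. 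The same bound holds after differentiating in $x$ up to order three, since $G(y,\cdot)$ is smooth for $|x-y|\ge d$.

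For each ball $B_d(x_{p,j})$, expand $G(y,x)$ around $y=x_{p,j}$ to second order:
\[
G(y,x)=G(x_{p,j},x)+\langle\nabla_yG(x_{p,j},x),y-x_{p,j}\rangle+O(|y-x_{p,j}|^2).
\]
The zeroth-order term produces exactly $A_{p,j}G(x_{p,j},x)$. For the first-order term, rescale $y=x_{p,j}+\varepsilon_{p,j}z$ and use $\varepsilon_{p,j}^4 p\,u_p(x_{p,j})^{p-1}=1$. This gives
\[
\varepsilon_{p,j}\sum_{i=1}^4\partial_iG(x_{p,j},x)\int_{B_{d/\varepsilon_{p,j}}(0)}z_i\Big[\Big(1+\tfrac{Z_{p,j}}{p}\Big)^+\Big]^{p-1}\xi_{p,j}(z)\,dz .
\]
Lemma \ref{llma} bounds the weight by $C(1+|z|)^{-8+\sigma}$, and $|z|\,|\xi_{p,j}|\le |z|$ is integrable against it. So dominated convergence together with Proposition \ref{daprop3-2} shows the integral tends to
\[
\int_{\mathbb R^4}z_i\,e^{Z}\Big(a_j\frac{8\sqrt6-|z|^2}{8\sqrt6+|z|^2}+\sum_h\frac{b_{h,j}z_h}{8\sqrt6+|z|^2}\Big)dz .
\]
The $a_j$-part vanishes by oddness, and so do the cross terms $h\ne i$. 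What remains is $b_{i,j}\int_{\mathbb R^4} e^Z z_i^2/(8\sqrt6+|z|^2)\,dz$, and one must check that this constant equals $8\pi^2$.

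Rather than compute by radial integration, I would use the kernel equation. The function $\phi_i=z_i/(8\sqrt6+|z|^2)$ is a multiple of $\partial_iZ$ and satisfies $\Delta^2\phi_i=e^Z\phi_i$. Hence $\int e^Z\phi_i z_i=\int \Delta^2\phi_i\, z_i$, which is a boundary flux at infinity. Since $\phi_i\sim z_i|z|^{-2}$, this flux can be evaluated against the fundamental solution $-\frac1{8\pi^2}\ln|z|$, the same way \eqref{eq:PSS-final} was computed. I expect this to give $8\pi^2$; a direct Beta-function computation would serve as a backup check, and it is the main arithmetic point to confirm.

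The quadratic remainder is
\[
O\Big(\varepsilon_{p,j}^2\int |z|^2(1+|z|)^{-8+\sigma}\,dz\Big)=O(\varepsilon_p^2)=o(\varepsilon_p).
\]
The $C^3$ statement follows identically, replacing $G$ by its $x$-derivatives, which are uniformly bounded with their $y$-derivatives on the region $|x-x_{p,j}|\ge2d$, $|y-x_{p,j}|\le d$. The main obstacle is uniform integrability of $|z|$ times the weight on all of $B_{d/\varepsilon_{p,j}}(0)$, not just on compacts. The decay in Lemma \ref{llma} handles this only where the positive part is controlled; where $1+Z_{p,j}/p\le0$ the weight vanishes. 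The identification of the constant $8\pi^2$ is the other point needing care.
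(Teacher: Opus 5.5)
Your proposal follows the paper's proof essentially step for step: Green representation, super-exponential smallness of $p(u_p^+)^{p-1}$ off the balls, first-order Taylor expansion of $G(\cdot,x)$ at $x_{p,j}$, dominated convergence via Lemma \ref{llma} and Proposition \ref{daprop3-2}, and an $O(\varepsilon_p^2)$ quadratic remainder. Your flux evaluation of the constant also works and agrees with the paper's direct radial (Beta-function) integration: with $\phi_i=z_i/(8\sqrt6+|z|^2)$ one has $\partial_\nu\Delta\phi_i\,z_i-\Delta\phi_i\,\nu_i=16\nu_i^2R^{-3}(1+O(R^{-4}))$ on $\partial B_R$, so $\int_{\mathbb R^4}e^Z z_i\phi_i\,dz=4|\mathbb S^3|=8\pi^2$.

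The only slip is in the $C^3$ bound for the off-ball integral. There $y$ may approach $x$, so smoothness of $G(y,\cdot)$ for $|x-y|\ge d$ is not available. Use instead $|D^\alpha_x G(y,x)|\le C|x-y|^{-|\alpha|}$, which is integrable in $y$ for $|\alpha|\le 3$ in dimension four. The paper itself only writes out the $C^1$ case.
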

\begin{proof}
Using the Green's representation, we obtain
\begin{equation}\label{5-21-1}
\begin{split}
 {\xi}_p(x)=&  p \sum^k_{j=1}\int_{B_d(x_{p,j})} G(y,x)
 (u^+_{p})^{p-1}(y) \xi_{p}(y)dy+p\int_{\Omega\backslash \bigcup^k_{j=1}B_{d}(x_{p,j})}G(y,x)
(u^+_{p})^{p-1}(y) \xi_{p}(y)dy.
\end{split}
\end{equation}
Then for $y\in \Omega\backslash \bigcup^k_{j=1}B_{2d}(x_{p,j})$, from \eqref{11-14-03N} we have
$$
p(u^+_{p})^{p-1}=O\Big(\frac{C^p}{p^{p-2}} \Big)=o\big( {\varepsilon}_{p}\big).$$
And it holds
\begin{equation*}
\int_{\Omega}G(x,y)dy\leq C ~\mbox{uniformly for}~x\in \Omega\backslash \bigcup^k_{j=1}B_{d}(x_{p,j}).
\end{equation*}
It follows that
\begin{equation}\label{agil44}
\begin{split}
 {\xi}_p(x)=p\sum^k_{j=1}\int_{B_d(x_{p,j})}  G(y,x)
 (u^+_{p})^{p-1}(y) \xi_{p}(y)dy+o\big( {\varepsilon}_{p}\big)~\mbox{uniformly in}~\Omega\backslash \bigcup^k_{j=1}B_{d}(x_{p,j}).
\end{split}
\end{equation}
Next for $x\in \Omega\backslash \bigcup^k_{j=1}B_{2d}(x_{p,j})$, following the argument used in the proof of Lemma \ref{prop3-1}, we have
\begin{equation*}
\begin{split}
p\int_{B_d(x_{p,j})}& G(y,x)
(u^+_{p})^{p-1}(y) \xi_{p}(y)dy \\=&
 A_{p,j}G(x_{p,j},x)
 + \sum^4_{i=1}\varepsilon_{p,j} B_{p,j,i}\partial_iG(x_{p,j},x)
 +O\left(
p\int_{B_d(x_{p,j})}
|y-x_{p,j}|^2(u^+_{p})^{p-1}(y)|\xi_p(y)|\,dy
\right)
\end{split}
\end{equation*}
uniformly in $\Omega\backslash \bigcup^k_{j=1}B_{d}(x_{p,j})$,
where $A_{p,j}$ is the term in \eqref{aaa5-7-4} and
$$ B_{p,j,i}:= \int_{B_{d/\varepsilon_{p,j}}(0)}
y_i
\Big[\Big(1+\frac{ Z_{p,j}(z)}{p}
  \Big)^+\Big]^{p-1}
\xi_{p,j}(y)\,dy=\frac{p}{\varepsilon_{p,j}}\int_{B_d(x_{p,j})}
(y_i-x_{p,j,i})(u^+_{p})^{p-1}(y)\xi_p(y)\,dy.$$
Since $\|\xi_{p,j}\|_{L^\infty}\le C$, the last integral is uniformly
bounded. Therefore,
\[
p\int_{B_d(x_{p,j})}
|y-x_{p,j}|^2(u^+_{p})^{p-1}(y)|\xi_p(y)|\,dy
=
O(\varepsilon_{p,j}^2)
=
o(\varepsilon_{p,j}).
\]
Hence it holds
\begin{equation}\label{agil45}
\begin{split}
p\int_{B_d(x_{p,j})}& G(y,x)
(u^+_{p})^{p-1}(y) \xi_{p}(y)dy =
 A_{p,j}G(x_{p,j},x)
 + \sum^4_{i=1}\varepsilon_{p,j} B_{p,j,i}\partial_iG(x_{p,j},x)+o\big(\varepsilon_{p,j} \big),
\end{split}
\end{equation}
uniformly for $x\in \Omega\backslash \bigcup^k_{j=1}B_{2d}(x_{p,j})$. Namely by \eqref{agil44},
 it follows
\begin{equation*}
{\xi}_p(x)=\sum^k_{j=1}A_{p,j}G(x_{p,j},x)
 +\sum^k_{j=1} \sum^4_{i=1}\varepsilon_{p,j} B_{p,j,i}\partial_iG(x_{p,j},x)+o\big(\varepsilon_{p,j} \big)~\mbox{uniformly in}~\Omega\backslash \bigcup^k_{j=1}B_{d}(x_{p,j}).
\end{equation*}
Using Lemma \ref{llma}, we have  $\left|
x_i
\Big[\Big(1+\frac{ Z_{p,j}(z)}{p}
  \Big)^+\Big]^{p-1}
\xi_{p,j}(x)
\right|
\le
\frac{C}{(1+|x|)^{7-\sigma}}$, and then using  dominated convergence theorem, we get
\begin{equation}\label{a5-8-53}
\begin{split}
\lim_{p\to+\infty}B_{p,j,i}
&=
\int_{\mathbb R^4}
x_i e^Z
\left(
a_j
\frac{8\sqrt6-|x|^2}{8\sqrt6+|x|^2}
+
\sum_{\ell=1}^4
b_{\ell,j}
\frac{x_\ell}{8\sqrt6+|x|^2}
\right)\,dx  \\
&=
b_{i,j}
\int_{\mathbb R^4}
\frac{x_i^2 e^Z}{8\sqrt6+|x|^2}\,dx \\
&=
\frac{b_{i,j}}{4} |S^3|(8\sqrt6)^4
\int_0^{+\infty}
\frac{r^5}{(8\sqrt6+r^2)^5}\,dr=
8\pi^2{b_{i,j}} .
\end{split}\end{equation}
Then \eqref{agil44}, \eqref{agil45} and \eqref{a5-8-53} imply
\begin{equation*}\begin{split}
\xi_{p}(x)=&\sum^k_{j=1}A_{p,j}G(x_{p,j},x)+8\pi^2
 \sum^k_{j=1}\sum^4_{i=1} b_{i,j}\varepsilon_{p,j}  \partial_iG(x_{p,j},x)
 +o\big( {\varepsilon}_{p}\big)\,\,~\mbox{uniformly in}~
 \Omega\backslash \bigcup^k_{j=1}B_{2d}(x_{p,j}).
 \end{split}
\end{equation*}
On the other hand, from \eqref{agil44}, we know
\[
\frac{\partial \xi_p}{\partial x_m}(x)
=
p\int_\Omega
\frac{\partial G}{\partial x_m}(y,x)
(u^+_{p})^{p-1}(y)\xi_p(y)\,dy,
\qquad m=1,\ldots,4.
\]
Then, similarly to the above computations, we obtain
\[
\frac{\partial \xi_p}{\partial x_m}(x)
=
\sum_{j=1}^k
A_{p,j}
\frac{\partial G}{\partial x_m}(x_{p,j},x)
+
8\pi^2
\sum_{j=1}^k\sum_{i=1}^4
b_{i,j}\varepsilon_{p,j}
\frac{\partial}{\partial x_m}
\partial_iG(x_{p,j},x)
+
o(\varepsilon_p)\,\,~\mbox{uniformly in}~
 \Omega\backslash \bigcup^k_{j=1}B_{2d}(x_{p,j}).
\]
\end{proof}

We shall apply the local Pohozaev identities \eqref{dafd} and \eqref{07-08-22} from Proposition \ref{prop:PohozaevLin}  to $u_p$ and $\xi_p$,  together with the properties of the Green function established in Proposition \ref{lem2-1},
will be used to show that, in \eqref{111} and \eqref{alaa1}--\eqref{aaa5-7-4}  one has that $a_j=b_{i,j}=0$ and that $A_{p,j}=o(\e_{p,j})$ (see
propositions  \ref{dprop-luo1} and \ref{prop-gl} below).


\begin{Prop}\label{dprop-luo1}
Let $A_{p,j}$ be defined as in Proposition \ref{prop-2-2} and let $a_{j}$ be the constants  in \eqref{111}.
Then it holds
\begin{equation}\label{dddluo-13}
  A_{p,j}=o\big( \e_p\big) ~\mbox{ and }~a_{j}=0\,\,~\mbox{for}~j=1,\cdots,k.
\end{equation}
\end{Prop}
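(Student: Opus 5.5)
The plan follows the Lane--Emden strategy of Grossi--Ianni--Luo--Yan. First apply the scaling Pohozaev identity \eqref{07-08-22} to the pair $(\xi_p,u_p)$, which shows that the vector $(A_{p,1},\dots,A_{p,k})$ lies asymptotically in the kernel of a nondegenerate matrix. Then read off $a_j$ from the rescaled limit.

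\emph{Step 1: expand $P_j(\xi_p,u_p)$.} Fix $\theta=d$ small. By Lemma \ref{llas} and bilinearity, insert the expansion \eqref{alaa1} of $\xi_p$ and the expansion \eqref{dluo-1} of $u_p$, both valid in $C^1$ (and higher) away from the $x_{p,m}$. Using \eqref{1-1} and \eqref{abb1-1} from Proposition \ref{lem2-1}, the left side becomes
\[
P_j(\xi_p,u_p)=\frac{A_{p,j}C_{p,j}}{8\pi^2}+O\Big(\frac{\e_p}{p}\Big)+o\Big(\frac{\e_p}{p}\Big),
\]
since the $\partial_iG$-terms carry the factor $\e_{p,m}C_{p,j}=O(\e_p/p)$. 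On the right side of \eqref{07-08-22}, the boundary integral is exponentially small by \eqref{11-14-03N}. In the volume term we rescale:
\[
4\int_{B_d(x_{p,j})}(u_p^+)^p\xi_p=\frac{4u_p(x_{p,j})}{p}\int_{B_{d/\e_{p,j}}}\Big[\Big(1+\frac{Z_{p,j}}{p}\Big)^+\Big]^p\xi_{p,j}.
\]
We compare this with $A_{p,j}=u_p(x_{p,j})\int\big[(1+Z_{p,j}/p)^+\big]^{p-1}\xi_{p,j}$. The two weights differ by the factor $1+Z_{p,j}/p$, so
\[
4\int(u_p^+)^p\xi_p=\frac{4}{p}A_{p,j}+O\Big(\frac{1}{p^2}\int e^{Z}|Z|\Big)=\frac{4}{p}A_{p,j}+O(p^{-2}).
\]

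\emph{Step 2: the leading-order relation.} Since $C_{p,j}=\frac{64\pi^2\sqrt e}{p}(1+o(1))$, this gives $\frac{8\sqrt e}{p}A_{p,j}(1+o(1))=\frac{4}{p}A_{p,j}+O(p^{-2})+O(\e_p/p)$. Because $8\sqrt e\neq 4$, we get $A_{p,j}=O(1/p)$.

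\emph{Step 3: $a_j=0$.} Dominated convergence (Lemma \ref{llma}) gives $A_{p,j}\to\sqrt e\,a_j\int_{\mathbb R^4}e^Z\frac{8\sqrt6-|x|^2}{8\sqrt6+|x|^2}$. I expect this last integral to vanish: it is the integral of $e^Z$ times the dilation kernel element, hence a derivative of the mass, which is $\lambda$-invariant. So $A_{p,j}\to0$ carries no information about $a_j$ directly, and a finer argument is needed.

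To get $a_j=0$, evaluate the Green representation of $\xi_p$ at $x_{p,j}$, mirroring \eqref{s5-8-4}. We have $\xi_p(x_{p,j})=\xi_{p,j}(0)\to a_j$. On the other hand, $p\int G(y,x_{p,j})(u_p^+)^{p-1}\xi_p$ splits into three pieces. The first is $-\frac{\ln\e_{p,j}}{8\pi^2}A_{p,j}$, where $\ln\e_{p,j}\sim -p/8$. The second is the bounded term $-\frac{\sqrt e}{8\pi^2}\int\ln|z|\,e^Z\xi_j$. The third, $H$ together with the terms from the other bubbles, contributes $O(A_{p,m})$. This yields
\[
a_j=\frac{p}{64\pi^2}A_{p,j}-\frac{\sqrt e}{8\pi^2}\,a_j\int\ln|z|\,e^Z\frac{8\sqrt6-|z|^2}{8\sqrt6+|z|^2}+o(1).
\]
Hence $pA_{p,j}$ has a limit $\alpha_j$, which is linear in $a_j$.

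Now return to Step 1 at order $1/p^2$. This requires keeping the $1/p$ correction in $C_{p,j}$ from \eqref{3.553.55}, the correction $-\frac{1}{p}\int e^Z Z\xi_j$ in the volume term, and the off-diagonal terms $C_{p,m}A_{p,j}$, which vanish by \eqref{1-1}. The result is a second linear identity between $\alpha_j$ and $a_j$. The two identities should be independent, which forces $a_j=0=\alpha_j$. I then expect $A_{p,j}=o(\e_p)$ to follow by iterating once more: with $a_j=0$, every term above is $O(\e_p)$ plus $o(\e_p)$ errors, and the coefficient $8\sqrt e-4\neq0$ lets us solve for $A_{p,j}$.

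\emph{Main obstacle.} The hard part is the second-order bookkeeping in Step 3. We need the explicit constants $\int\ln|z|e^Z\Lambda Z$ and $\int e^ZZ\Lambda Z$, where $\Lambda Z=4+x\cdot\nabla Z$, and we need the resulting $2\times2$ system in $(\alpha_j,a_j)$ to be nonsingular. I would first try computing these integrals in the variable $t=|z|^2/(8\sqrt6)$, as in \eqref{6-4-12}. We also need the error terms at order $1/p^2$ to be controlled by Propositions \ref{key-1}--\ref{key-2}.
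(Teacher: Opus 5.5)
The gap is in the half of the statement asserting $A_{p,j}=o(\e_p)$. You handle the volume term in \eqref{07-08-22} by rescaling:
\[
4\int_{B_d(x_{p,j})}(u_p^+)^p\xi_p=\frac{4u_p(x_{p,j})}{p}\Big(A_{p,j}+\frac1p\int_{B_{d/\e_{p,j}}(0)}Z_{p,j}\Big[\Big(1+\frac{Z_{p,j}}{p}\Big)^+\Big]^{p-1}\xi_{p,j}\Big).
\]
The last integral is only known to be $O(1)$, and only $o(1)$ once $a_j=0$, because nothing controls the rate at which $\xi_{p,j}$ converges to its blow-up limit. The same applies to the $o(1)$ errors in your Green representation at $x_{p,j}$. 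So your final ``iteration'' yields at best $A_{p,j}=o(1/p)+O(\e_p)$, never the exponentially small bound. Your assertion that every term is then $O(\e_p)$ is false.

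The missing idea, which is what the paper uses, is the exact identity
\[
(p-1)\int_{B_d(x_{p,j})}(u_p^+)^p\xi_p=\int_{B_d(x_{p,j})}\big(u_p\Delta^2\xi_p-\xi_p\Delta^2u_p\big),
\]
which holds because $u_p\Delta^2\xi_p=p(u_p^+)^p\xi_p$ and $\xi_p\Delta^2u_p=(u_p^+)^p\xi_p$. It turns the volume term into a boundary integral on $\partial B_d(x_{p,j})$. There the outer expansions \eqref{luo-1} and \eqref{alaa1} apply with only $\e_p$-type errors, so the boundary integral is $O(\max_m|A_{p,m}|/p)+O(\e_p/p)$. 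Hence the right side of \eqref{07-08-22} is $O(\max_m|A_{p,m}|/p^2)+O(\e_p/p^2)$.

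You also need the $b_{i,m}$-contribution to $P_j(\xi_p,u_p)$ to be $o(\e_p/p)$, not merely $O(\e_p/p)$. By \eqref{abb1-1} it equals $-4\pi^2\e_{p,j}\sum_i b_{i,j}\big(C_{p,j}\partial_iR(x_{p,j})+2\sum_{m\neq j}C_{p,m}D_{x_i}G(x_{p,m},x_{p,j})\big)$, which is small by Proposition \ref{lem3-8}, i.e.\ by the critical-point relation. With both ingredients, $\frac{8\sqrt e}{p}A_{p,j}(1+o(1))=o(\e_p/p)+O(\max_m|A_{p,m}|/p^2)$ gives $A_{p,j}=o(\e_p)$ at once. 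Then $a_j=0$ follows by splitting
$A_{p,j}=\frac{p}{u_p(x_{p,j})}\int_{B_d(x_{p,j})}(u_p(x_{p,j})-u_p)(u_p^+)^{p-1}\xi_p+\frac{p}{u_p(x_{p,j})}\int_{B_d(x_{p,j})}(u_p^+)^p\xi_p$.
This forces $a_j\int_{\R^4}Ze^Z\frac{8\sqrt6-|z|^2}{8\sqrt6+|z|^2}=0$, and that integral equals $64\pi^2$.

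For $a_j=0$ alone, your two-relation scheme does close, although you left the nondegeneracy unchecked. Keep the remainder above instead of discarding it into $O(p^{-2})$. Then Step 1 gives $pA_{p,j}\to 64\pi^2a_j$; no $1/p$-correction of $C_{p,j}$ is needed, since $A_{p,j}=O(1/p)$. Your Green-representation relation, using $\int_{\R^4}\ln|z|\,e^Z\frac{8\sqrt6-|z|^2}{8\sqrt6+|z|^2}=-16\pi^2$, gives $pA_{p,j}\to-64\pi^2a_j$, so $a_j=0$.

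Mind the normalization. $A_{p,j}=\int_{B_{d/\e_{p,j}}(0)}[(1+Z_{p,j}/p)^+]^{p-1}\xi_{p,j}$ carries no factor $u_p(x_{p,j})$. So Step 2 compares $8\sqrt e$ with $4\sqrt e$, and the $\sqrt e$ in front of $\int\ln|z|e^Z\xi_j$ should be dropped. With the identity above, the whole second-order detour is unnecessary.
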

\begin{proof}
We evaluate the two sides of the Pohozaev identity \eqref{07-08-22} with $\xi=\xi_{p}$ and $u=u_{p}$. By the expansions of $u_{p}$ and $\xi_{p}$ in \eqref{luo-1} and \eqref{alaa1} respectively, together with
\eqref{1-1}, and \eqref{luoluo1}, we have
 \begin{equation}\label{dgil61}
\begin{split}
 \text{LHS of (\ref{07-08-22})}=& \sum^k_{m=1}\sum^k_{s=1}\big(A_{p,m}C_{p,s}\big)
P_{j}\Big(G(x_{p,s},x),G(x_{p,m},x)\Big)\\&+8\pi^2 \e_{p,j}
\sum^k_{m=1}\sum^k_{s=1} \sum^4_{i=1}\big(b_{i,m}C_{p,s}\big)
P_{j}\Big(G(x_{p,s},x),\partial_iG(x_{p,m},x)\Big)\\&+
o\Big(\frac{ {\varepsilon}_{p}}{p} \Big)
 +
o\Big( {\varepsilon}_{p} \sum^k_{m=1} C_{p,m}\Big)\\=&
\frac{A_{p,j}C_{p,j}}{8\pi^2} +8\pi^2 \e_{p,j}
\sum^k_{m=1}\sum^k_{s=1} \sum^4_{i=1}\big(b_{i,m}C_{p,s}\big)
P_{j}\Big(G(x_{p,s},x),\partial_iG(x_{p,m},x)\Big)+
o\Big(\frac{ {\varepsilon}_{p}}{p} \Big).
\end{split}
\end{equation}
Also by \eqref{abb1-1} and Proposition \ref{lem3-8}, we have
\begin{equation}\label{hfgil61}
\begin{split}
&\sum_{m=1}^k\sum_{s=1}^k\sum_{i=1}^4
b_{i,m}C_{p,s}
P_j\left(G(x_{p,s},x),\partial_iG(x_{p,m},x)\right)
\\
&\quad
=
-\frac12
\sum_{i=1}^4 b_{i,j}
\left(
C_{p,j}\frac{\partial R(x_{p,j})}{\partial x_i}
+
2\sum_{\substack{m=1\\m\ne j}}^k
C_{p,m}D_{x_i}G(x_{p,m},x_{p,j})
\right)
\\
&\quad
=
o\left(\frac{\epsilon_p}{p}\right).
\end{split}
\end{equation}
Hence from \eqref{dgil61} and \eqref{hfgil61}, we find
\begin{equation}\label{gil61}
\begin{split}
 \text{LHS of (\ref{07-08-22})}  =&
\frac{A_{p,j}C_{p,j}}{8\pi^2}
+
o\left(\frac{\epsilon_p}{p}\right).
\end{split}
\end{equation}

On the other hand, using \eqref{11-14-03N}, we have
\begin{equation*}
d\int_{\partial B_d(x_{p,j})}(u^+_{p})\xi_p
\leq d \int_{\partial B_d(x_{p,j})}(u^+_{p})=O\Big(\frac{C^p}{p^p} \Big),
\end{equation*}
and then
 \begin{equation}\label{adgil61}
\begin{split}
 \text{RHS of (\ref{07-08-22})}=&=
4\int_{B_d(x_{p,j})}(u^+_{p})^p\xi_p
-
d\int_{\partial B_d(x_{p,j})}(u^+_{p})^p\xi_p
\\
&=
4\int_{B_d(x_{p,j})}(u^+_{p})^p\xi_p
+
O\left(\frac{C^p}{p^p}\right).
\end{split}
\end{equation}
Also by  partial integral formulation, it holds
 \begin{equation}\label{5-22-03}
\begin{split}
(p-1)\int_{B_d(x_{p,j})}(u^+_{p})^p\xi_p
&=
\int_{B_d(x_{p,j})}
\left(
u_p\Delta^2\xi_p-\xi_p\Delta^2u_p
\right)
\\
&=
\int_{\partial B_d(x_{p,j})}
\bigg[
u_p\frac{\partial \Delta\xi_p}{\partial\nu}
-
\xi_p\frac{\partial \Delta u_p}{\partial\nu}
-
\frac{\partial u_p}{\partial\nu}\Delta\xi_p
+
\frac{\partial \xi_p}{\partial\nu}\Delta u_p
\bigg]\,d\sigma .
\end{split}
\end{equation}
By \eqref{alaa1}, we have
\begin{equation*}
\begin{split}
\xi_p(x)
=
\sum_{m=1}^k A_{p,m}G(x_{p,m},x)
+
O(\epsilon_p)
\quad
\text{in }
C^3\left(
\Omega\setminus\bigcup_{m=1}^k B_{2d}(x_{p,m})
\right).
\end{split}\end{equation*}
Combining this with the expansion of $u_{p}$ in \eqref{luo-1}, we obtain
\begin{equation}\label{aad}
\begin{split}
\int_{\partial B_d(x_{p,j})}
\bigg[
u_p\frac{\partial \Delta\xi_p}{\partial\nu}
-
\xi_p\frac{\partial \Delta u_p}{\partial\nu}
-
\frac{\partial u_p}{\partial\nu}\Delta\xi_p
+
\frac{\partial \xi_p}{\partial\nu}\Delta u_p
\bigg]\,d\sigma
=
O(A_{p,j}C_{p,j})
+
O\left(\frac{\epsilon_p}{p}\right).
\end{split}\end{equation}
Therefore, using \eqref{adgil61}, \eqref{5-22-03} and \eqref{aad}, we get
 \begin{equation}\label{fgil61}
\begin{split}
 \text{RHS of (\ref{07-08-22})}=
4\int_{B_d(x_{p,j})}(u^+_{p})^p\xi_p
-
d\int_{\partial B_d(x_{p,j})}(u^+_{p})^p\xi_p
=
O\left(\frac{A_{p,j}C_{p,j}}{p}\right)
+
O\left(\frac{\epsilon_p}{p^2}\right).
\end{split}
\end{equation}
It follows from \eqref{gil61}
and \eqref{fgil61} that
\begin{equation}\label{Adad}
\frac{A_{p,j}C_{p,j}}{8\pi^2}
 +
o\Big(\frac{ {\varepsilon}_{p}}{p}\Big)=
O\Big(\frac{A_{p,j}C_{p,j}}{p}\Big).
\end{equation}
Since by \eqref{luoluo1}, we have $
C_{p,j}
=
\frac1p\left(64\pi^2\sqrt e+o(1)\right)$, then from \eqref{Adad}, we obtain
\begin{equation}\label{st5-8-52}
A_{p,j}=o\big( \e_p\big)~\mbox{for}~j=1,\cdots,k.
\end{equation}
 Now recalling \eqref{aaa5-7-4}, we write
\begin{equation}\label{21-06-08-1}
\begin{split}
A_{p,j}=&\frac{p}{u_p(x_{p,j})}
\int_{B_d(x_{p,j})} \big( u_p(x_{p,j}) -u_p(x)\big)(u^+_{p})^{p-1}\xi_{p}
+\frac{p}{u_p(x_{p,j})}
\int_{B_d(x_{p,j})} (u^+_{p})^{p}\xi_{p}.
\end{split}\end{equation}
Moreover, by \eqref{luoluo1}, \eqref{5-22-03} and \eqref{aad}, we have
\begin{equation}\label{21-06-08-2}
\begin{split}
\int_{B_d(x_{p,j})} (u^+_{p})^{p}\xi_{p}=O\Big(\frac{A_{p,j}C_{p,j}}{p}\Big)+O\Big(\frac{ {\varepsilon}_{p}}{p^2}\Big)=O\Big(\frac{A_{p,j}}{p^2}\Big)+O\Big(\frac{ {\varepsilon}_{p}}{p^2}\Big).
\end{split}\end{equation}
Therefore, using \eqref{ConvMax}, \eqref{st5-8-52}, \eqref{21-06-08-1} and \eqref{21-06-08-2},  we obtain
\begin{equation}\label{st5-8-521}
\int_{B_d(x_{p,j})} \big( u_p(x_{p,j}) -u_p(x)\big)(u^+_{p})^{p-1}\xi_{p}=O\Big(\frac{ {\varepsilon}_{p}}{p}\Big).
\end{equation}
On the other hand, after the scaling argument, and using Lemma \ref{llma}, \eqref{111} together with the dominated convergence theorem, we have
\begin{equation}\label{sy5-8-52}
\begin{split}
&\int_{B_d(x_{p,j})}\bigl(u_p(x_{p,j})-u_p(z)\bigr)(u^+_{p})^{p-1}\xi_p\,dx
\\
&=
-\frac{u_p(x_{p,j})}{p^{2}}
\int_{B_{d/\varepsilon_{p,j}}(0)}
Z_{p,j}(z)\Big[\Big(1+\frac{ Z_{p,j}(z)}{p}
  \Big)^+\Big]^{p-1}\xi_{p,j}(z)\,dz
\\
&=
-\frac{u_p(x_{p,j})}{p^{2}}
\left(
a_j\int_{\mathbb R^{4}}
Z(z)e^{Z(z)}\frac{8\sqrt6-|z|^{2}}{8\sqrt6+|z|^{2}}\,dz
+o(1)
\right)
\\
&=
-\frac{64\pi^{2}\sqrt e}{p^{2}}\,a_j
+o\!\left(\frac{1}{p^{2}}\right),
\end{split}
\end{equation}
Clearly, \eqref{st5-8-521} and \eqref{sy5-8-52} imply that $a_j=0$ for $j=1,\cdots,k$.
\end{proof}
\begin{Rem}
In the preceding proof, \eqref{hfgil61} was derived from Proposition \ref{lem3-8}, whose proof relies on the improved expansion \eqref{dluo-1} of $u_{p}$. Let us stress that if one uses the expansion  \eqref{luo-1} of $u_{p}$
 instead of \eqref{dluo-1}, then Proposition \ref{lem3-8} becomes
 \[
C_{p,j} \frac{\partial R(x_{p,j})}{\partial{x_i}}+2\displaystyle\sum^k_{m=1,m\neq j}
C_{p,m}
D_{x_i} G(x_{p,m},x_{p,j})=o\big( \frac{\varepsilon_{p}}{p} \big),
\]
which is still enough to get \eqref{hfgil61}.
\end{Rem}
\begin{Prop}\label{prop-gl}
If $x_\infty$ is a non-degenerate critical point of $\Psi_k$, then it holds
\begin{equation}\label{a3-29-02}
b_{i,j}=0,~\mbox{for}~i=1,2,3,4~\mbox{and}~j=1,\cdots,k,
\end{equation}
where $b_{i,j}$ are  constants in \eqref{111}.
\end{Prop}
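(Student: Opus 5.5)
The plan is to use the translation-type Pohozaev identity \eqref{dafd} of Proposition \ref{prop:PohozaevLin} with $u=u_p$, $\xi=\xi_p$ on $\partial B_\theta(x_{p,j})$, for each $i=1,\dots,4$ and $j=1,\dots,k$. On the right-hand side the integrand is $(u_p^+)^p\xi_p\nu_i$ on a sphere at fixed distance from the concentration points. By \eqref{11-14-03N} this term is $O(C^p/p^p)$, hence $o(\varepsilon_p/p)$, since $\varepsilon_p$ decays only like $e^{-p/8}$ by \eqref{nn3-29-03}. For the left-hand side we insert the expansion \eqref{luo-1} (or \eqref{dluo-1}) of $u_p$ and the expansion \eqref{alaa1} of $\xi_p$, which hold in $C^3$ away from the points. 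Here $A_{p,m}=o(\varepsilon_p)$ by Proposition \ref{dprop-luo1}, and $C_{p,s}=\frac1p(64\pi^2\sqrt e+o(1))$ by \eqref{luoluo1}. Expanding $Q_j$ bilinearly, we expect
\[
Q_j(\xi_p,u_p)=8\pi^2\sum_{s,m=1}^k\sum_{h=1}^4 C_{p,s}\,b_{h,m}\,\varepsilon_{p,m}\,Q_j\big(G(x_{p,s},x),\partial_hG(x_{p,m},x)\big)+o\Big(\frac{\varepsilon_p}{p}\Big).
\]
The $A_{p,m}$-terms contribute $o(\varepsilon_p)\cdot O(1/p)$, and the remainders are of the same order.

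Next we evaluate the $Q_j$ values with \eqref{aluo41}. The choice $s=m=j$ gives $\frac12\partial^2_{x_ix_h}R(x_{p,j})$. The choice $m=j,\ s\neq j$ gives $D_{x_i}\partial_hG(x_{p,s},x_{p,j})$, and the choice $s=j,\ m\neq j$ gives $D^2_{x_ix_h}G(x_{p,m},x_{p,j})$. Dividing by $\varepsilon_p/p$ and letting $p\to\infty$, we use \eqref{3-29-03} to write $\varepsilon_{p,m}/\varepsilon_p\to\gamma_m>0$, and we use $pC_{p,s}\to64\pi^2\sqrt e$ and $x_{p,j}\to x_{\infty,j}$. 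For every $i,j$ this yields
\[
\gamma_j\sum_{h}b_{h,j}\Big(\tfrac12\partial^2_{ih}R(x_{\infty,j})+\sum_{s\ne j}D_{x_i}\partial_hG(x_{\infty,s},x_{\infty,j})\Big)+\sum_{m\neq j}\gamma_m\sum_h b_{h,m}D^2_{x_ix_h}G(x_{\infty,m},x_{\infty,j})=0 .
\]
Using the symmetry $G(x,y)=G(y,x)$ and $\partial_{x_i}R(x)=2\partial_{x_i}H(x,y)|_{y=x}$, we identify the coefficient matrix with $\frac12\nabla^2\Psi_k(x_\infty)$, acting on the vector $(\gamma_m b_{h,m})_{h,m}$. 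The factor $\frac12$ and the factor $2$ on the cross terms must match exactly the structure of $\Psi_k=\sum_j\Psi_{k,j}$, in which each pair $G(a_j,a_m)$ appears twice. Since $x_\infty$ is a nondegenerate critical point, the Hessian is invertible, so $\gamma_m b_{h,m}=0$. As $\gamma_m>0$, this gives \eqref{a3-29-02}.

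The main obstacle is the bookkeeping in the second step. First, we must check that the mixed Hessian entries of $\Psi_k$ really coincide with the $Q_j$ values in \eqref{aluo41}. This requires care with which variable each derivative acts on, and in particular with $\partial_h$ (first slot) versus $D_{x_h}$. Second, we must ensure that every neglected term is genuinely $o(\varepsilon_p/p)$. This needs the error $o(\varepsilon_p)$ in \eqref{alaa1} to be valid in $C^3$, because $Q_j$ involves third derivatives. It also needs the error in \eqref{luo-1}, which is $o(\varepsilon_p/p)$, to pair against the $O(1)$-sized derivatives of $\xi_p$; since $\xi_p=O(\varepsilon_p)$ there, these pairings are $o(\varepsilon_p^2/p)$. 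If a cruder error appears in either expansion, we would fall back on the improved expansion \eqref{dluo-1} together with Lemma \ref{llas}, which lets us choose $\theta$ freely.
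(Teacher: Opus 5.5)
\textbf{Overall.} Your route is the paper's: identity \eqref{dafd} with $u=u_p$, $\xi=\xi_p$, the expansions \eqref{luo-1} and \eqref{alaa1}, $A_{p,m}=o(\varepsilon_p)$, the values \eqref{aluo41}, and nondegeneracy of $x_\infty$.

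\textbf{Index slip in reading \eqref{aluo41}.} The bookkeeping you flagged as the main obstacle does go wrong as written. In your sum, $s$ labels the undifferentiated $G(x_{p,s},x)$ and $m$ labels $\partial_hG(x_{p,m},x)$. But \eqref{aluo41} is stated for $Q_j\big(G(x_{p,m},x),\partial_hG(x_{p,s},x)\big)$, with the roles reversed, and you copied its case labels without swapping them. In your notation:
\begin{itemize}
\item $m=j$, $s\neq j$ gives $D^2_{x_ix_h}G(x_{p,s},x_{p,j})$;
\item $s=j$, $m\neq j$ gives $D_{x_i}\partial_hG(x_{p,m},x_{p,j})$.
\end{itemize}
Your displayed limit system therefore has mixed derivatives in the diagonal blocks and pure second-slot derivatives off the diagonal. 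That matrix is not $\frac12\nabla^2\Psi_k(x_\infty)$: on the singular part $-\frac1{8\pi^2}\ln|x-y|$ one has $\partial_{y_h}=-\partial_{x_h}$, so the swap even flips signs there.

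\textbf{Corrected limit.} The right limit is
\[
\gamma_j\sum_{h=1}^4 b_{h,j}\Big(\tfrac12\partial^2_{ih}R(x_{\infty,j})+\sum_{s\neq j}D^2_{x_ix_h}G(x_{\infty,s},x_{\infty,j})\Big)+\sum_{m\neq j}\gamma_m\sum_{h=1}^4 b_{h,m}\,D_{x_i}\partial_hG(x_{\infty,m},x_{\infty,j})=0 .
\]
This is exactly $\frac12\nabla^2\Psi_k(x_\infty)$ applied to $(\gamma_m b_{h,m})_{h,m}$. Indeed, the Hessian of $\Psi_k$ has diagonal blocks $\partial^2_{ih}R(a_j)+2\sum_{s\neq j}D^2_{x_ix_h}G(a_s,a_j)$ and off-diagonal blocks $2\,\partial_hD_{x_i}G(a_m,a_j)$. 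With this correction your argument closes.

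\textbf{Your normalization is better than the paper's.} Passing from \eqref{07-08-25} to \eqref{a7-30-2}, the paper silently replaces $\varepsilon_{p,s}/\varepsilon_{p,j}$ by $1$. By \eqref{3-29-03}, however, this ratio tends to $e^{8\pi^2(\Psi_{k,s}(x_\infty)-\Psi_{k,j}(x_\infty))}$, which need not equal $1$. Dividing by $\varepsilon_p/p$ and applying invertibility of the Hessian to the weighted vector $(\gamma_m b_{h,m})$ with $\gamma_m>0$ is what makes the last step rigorous.

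\textbf{A small point shared with the paper.} Expansions \eqref{luo-1} and \eqref{dluo-1} are stated only in $C^1$, whereas $Q_j(\xi_p,u_p)$ contains $\partial_\nu\Delta u_p$. The same Green-representation argument gives them in $C^3$ away from the concentration points.
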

\begin{proof}
We compute both sides of the Pohozaev identity \eqref{dafd} with $\xi=\xi_{p}$ and $u=u_{p}$. Using the expansions of $u_{p}$ and $\xi_{p}$ in \eqref{luo-1} and \eqref{alaa1} respectively, we obtain
\begin{equation}\label{aluo21}
\begin{split}
 \text{LHS of (\ref{dafd})}=&
 Q_{j}\big(\xi_p,u_p\big)=\sum^k_{s=1}\sum^k_{m=1}\big(A_{p,s}C_{p,m}\big)
 Q_{j}\Big(G(x_{p,s},x),G(x_{p,m},x)\Big)
\\&+8\pi^2
\sum^k_{s=1}\sum^k_{m=1} \sum^4_{h=1} \varepsilon_{p,s} b_{h,s}C_{p,m} Q_{j}
\Big(G(x_{p,m},x),\partial_hG(x_{p,s},x)\Big)
+o\Big(\frac{ {\varepsilon}_p}{p}\Big).
\end{split}
\end{equation}
Moreover, by \eqref{aluo1}, \eqref{luoluo1} and \eqref{dddluo-13} we have
\begin{equation}\label{a5-7-1}
\begin{split}
\sum^k_{s=1}\sum^k_{m=1}&\big(A_{p,s}C_{p,m}\big)
 Q_{j}\Big(G(x_{p,s},x),G(x_{p,m},x)\Big)=O\Big(\sum^k_{s=1}\sum^k_{m=1} \big|A_{p,s}C_{p,m}\big|
 \Big)=
o\Big(\frac{ {\varepsilon}_p}{p}\Big).
\end{split}
\end{equation}
Then from \eqref{aluo21} and \eqref{a5-7-1}, we find
\begin{equation}\label{asluo21}
\begin{split}
 \text{LHS of (\ref{dafd})}= 8\pi^2
\sum^k_{s=1}\sum^k_{m=1} \sum^4_{h=1} \varepsilon_{p,s} b_{h,s}C_{p,m} Q_{j}
\Big(G(x_{p,m},x),\partial_hG(x_{p,s},x)\Big)
+o\Big(\frac{ {\varepsilon}_p}{p}\Big).
\end{split}
\end{equation}
Furthermore, from \eqref{11-14-03N}, it follows
\begin{equation}\label{aluo22}
\begin{split}
 \text{RHS of (\ref{dafd})}= & \int_{\partial B_d(x_{p,j})}(u^+_{p})^p \xi_p \nu_i =
O\Big( \frac{C^p}{p^{p }}  \Big)=o\Big(\frac{\varepsilon_{p}}{p}\Big).
\end{split}
\end{equation}
Hence from \eqref{asluo21} and \eqref{aluo22}, we obtain
\begin{equation} \label{a5-7-3}
\begin{split}
\sum^k_{s=1}\sum^k_{m=1} \sum^4_{h=1}\varepsilon_{p,s}\big(b_{h,s}C_{p,m}\big)  Q_{j}
\Big(G(x_{p,m},x),\partial_hG(x_{p,s},x)\Big)
=o\Big(\frac{ {\varepsilon}_p}{p}\Big).
\end{split}
\end{equation}
 Substituting \eqref{aluo41} into \eqref{a5-7-3}, we have
 \begin{equation}\label{07-08-25}
 \begin{split}
 \frac{1}{2}\e_{p,j}&C_{p,j}\sum^4_{h=1}\frac{\partial^2R(x_{p,j})}{\partial x_i\partial x_h}b_{h,j}
 +\sum^4_{h=1}\sum_{s\neq j}\e_{p,s}b_{h,s}C_{p,j}D_{x_i}\partial_hG(x_{p,s},x_{p,j})
\\& +\sum^4_{h=1}\sum_{s\neq j}\e_{p,j}b_{h,j}C_{p,s}D^2_{x_ix_h}G(x_{p,s},x_{p,j})=o\Big(\frac{ {\varepsilon}_p}{p}\Big).
 \end{split}
 \end{equation}
Letting $p\to \infty$ in \eqref{07-08-25}, together with \eqref{luoluo1}, we find
\begin{equation}\label{a7-30-2}
\begin{split}
\sum^4_{h=1} b_{h,j}&\Big(\frac{\partial^2 R(x_{\infty,j})}{\partial{x_ix_h}}+2\sum_{s\neq j}
 \partial^2_{x_i  x_h}G(x_{\infty,j},x_{\infty,s}) \Big)
+
2\sum^4_{h=1}\sum_{s\neq j} b_{h,s}
D_{x_h}\partial_{x_i}G(x_{\infty,j},x_{\infty,s})=0.
\end{split}
\end{equation}
Since $x_{\infty}:=(x_{\infty,1},\cdots,x_{\infty,k})$
is the nondegenerate critical point of $\Psi_{k}(x)$, we   get
\eqref{a3-29-02} from \eqref{a7-30-2}.
\end{proof}

  \vskip 0.1cm

\begin{proof}[\underline{\textbf{Proof of Theorem \ref{th1.1}}}]
Assume that $\mathcal{L}_p \xi_p=0$ and $\xi_p\not\equiv0$.
Let $x_p$ be a maximum point of $\xi_p$ in
$\Omega$. We can suppose that $\xi_p(x_p)=1$.
By propositions \ref{daprop3-2}, \ref{dprop-luo1} and \ref{prop-gl}, we obtain, for any $R>0$,
\begin{equation*}
\|\xi_{p,j}\|_{L^{\infty}\big(B_R(0)\big)}=o(1)\,\,~\mbox{for}~j=1,\cdots,k.
\end{equation*}
Therefore, $x_p\in \Omega\backslash \displaystyle\bigcup^k_{j=1}B_{R\varepsilon_{p,j}}(x_{p,j})$, for any $R>0$. In particular,
\begin{equation}\label{lsas}
\frac{|x_p-x_{p,j}|}{\e_{p,j}}\to +\infty.
\end{equation}
Let us write  \begin{equation*}\begin{split}
\Omega\backslash \displaystyle\bigcup^k_{j=1}B_{R\varepsilon_{p,j}}(x_{p,j})=&
\Big(\Omega\backslash \bigcup^k_{j=1}B_{d}(x_{p,j})\Big) \bigcup
 \Big( \bigcup^k_{j=1}  \bigl( B_{d}(x_{p,j})\backslash  B_{2p\varepsilon_{p,j}}(x_{p,j})\bigr)\Big)\\&
  \bigcup  \Big( \bigcup^k_{j=1}   \bigl(B_{2p\varepsilon_{p,j}}(x_{p,j})\backslash  B_{R\varepsilon_{p,j}}(x_{p,j})\bigr)\Big).
  \end{split}\end{equation*}
Now we divide the proof into following three steps.

\vskip 0.1cm

\noindent \textbf{Step 1. We show that $x_p\notin \Omega\backslash \displaystyle\bigcup^k_{j=1}B_{d}(x_{p,j})$.}

We just need to prove that
\begin{equation}\label{hhs}
\xi_{p}=o(1),~~\mbox{uniformly in}~~\Omega\backslash \displaystyle\bigcup^k_{j=1}B_{d}(x_{p,j}).
\end{equation}
Using Proposition \ref{prop-2-2} and $b_{i,j}=0$ in Proposition \ref{prop-gl}, we have
\begin{equation*}
\begin{split}
 {\xi}_p(x)=& \sum^k_{j=1} A_{p,j}
  G(x_{p,j},x) + o(\e_p),~\mbox{
  uniformly in $\Omega\backslash \displaystyle\bigcup^k_{j=1}B_{d}(x_{p,j})$}.
\end{split}
\end{equation*}
Thus  \eqref{hhs} follows since  $A_{p,j}=o(\e_p)$
by Proposition \ref{dprop-luo1}, and observing that
\begin{equation}\label{lsst}
 \sup_{\Omega\backslash {\displaystyle\bigcup^k_{j=1}}
 	B_{R\varepsilon_{p,j}}(x_{p,j})}G(x_{p,j},x) =
  \sup_{\Omega\backslash {\displaystyle\bigcup_{j=1}^{k}} B_{R\varepsilon_{p,j}}(x_{p,j})}\left(\frac{1}{8\pi^2}\ln |x-x_{p,j}|+H(x,x_{p,j})\right)
 =O\Big(\big|\ln \e_{p,j}\big|\Big).
\end{equation}

\noindent \textbf{Step  2. We show that $x_p\notin  \displaystyle\bigcup^k_{j=1} \bigl(  B_{d}(x_{p,j})\backslash  B_{ 2p\varepsilon_{p,j} }(x_{p,j})\bigr)$.}

We claim that
\begin{equation}\label{ddc}
{\xi}_p(x)=o\big(1\big)~~\mbox{for}~x\in  \displaystyle\bigcup^k_{j=1} B_{d}(x_{p,j})\backslash  B_{ 2p\varepsilon_{p,j}}(x_{p,j}).
\end{equation}
By the Green's representation, \eqref{5-21-1} and similarly to the proof of Proposition \ref{prop-2-2}, we have
\begin{equation}\label{luos1}
\begin{split}
 {\xi}_p(x){=}& p \sum^k_{j=1}\int_{B_{d}(x_{p,j})} G(y,x)
 (u^+_{p})^{p-1}(y) \xi_{p}(y)dy+p\int_{\Omega\backslash \bigcup^k_{j=1}B_{d}(x_{p,j})}G(y,x)
(u^+_{p})^{p-1}(y) \xi_{p}(y)dy\\=&   p \sum^k_{j=1}\int_{ B_{ p\varepsilon_{p,j}}(x_{p,j})} G(y,x)
 (u^+_{p})^{p-1}(y) \xi_{p}(y)dy\\&
 +p\sum^k_{j=1} \int_{B_{d}(x_{p,j})\backslash  B_{ p\varepsilon_{p,j}}(x_{p,j})}G(y,x)
(u^+_{p})^{p-1}(y) \xi_{p}(y)dy+ o(\e_p).
\end{split}
\end{equation}
By Taylor's expansion and Lemma \ref{llma}, we find
\begin{equation}\label{tta}
\begin{split}
p  &\int_{ B_{ p\varepsilon_{p,j}}(x_{p,j})} G(y,x)
 (u^+_{p})^{p-1}(y) \xi_{p}(y)dy \\=&
p \int_{ B_{ p\varepsilon_{p,j}}(x_{p,j})}  G(x_{p,j},x)
 (u^+_{p})^{p-1}(y) \xi_{p}(y)dy\\&
 +O\left(p
 \int_{ B_{ p\varepsilon_{p,j}}(x_{p,j})} |y-x_{p,j} |\cdot |\nabla G(\xi,x)|
 (u^+_{p})^{p-1}(y) \xi_{p}(y)dy  \right)\\=&
 p\int_{ B_{ p\varepsilon_{p,j} }(x_{p,j})}  G(x_{p,j},x)
 (u^+_{p})^{p-1}(y) \xi_{p}(y)dy
 +O\left(\frac{1}{p} \int_{ B_{p}(0)} |y|\cdot \Big[\Big(1+\frac{ Z_{p,j}(z)}{p}
  \Big)^+\Big]^{p-1}  dy  \right)\\=&
 p G(x_{p,j},x) \int_{ B_{ p\varepsilon_{p,j} }(x_{p,j})}
 (u^+_{p})^{p-1}(y) \xi_{p}(y)dy
 +O\Big(\frac{1}{p}\Big),
\end{split}
\end{equation}where \(\xi\) is between \(x_{p,j}\) and \(y\). Moreover, recalling the definition of  $A_{p,j}$ in \eqref{aaa5-7-4}, the definition of $Z_{p,j}$ in \eqref{defwpj} and Lemma \ref{llma},
for $x\in  \displaystyle\bigcup^k_{j=1} B_{d}(x_{p,j})\backslash  B_{ 2p\varepsilon_{p,j}}(x_{p,j})$, similar to \eqref{3.553.55}, we have
\begin{equation}\label{tta1}
\begin{split}
& pG(x_{p,j},x)
\int_{B_d(x_{p,j})\setminus B_{p\varepsilon_{p,j}}(x_{p,j})}
(u^+_{p})^{p-1}(y)\xi_p(y)\,dy                                      \\
&=
O\left(
|\ln\varepsilon_{p,j}|
\int_{B_{d/\varepsilon_{p,j}}(0)\setminus B_p(0)}
\Big[\Big(1+\frac{ Z_{p,j}(z)}{p}
  \Big)^+\Big]^{p-1}
|\xi_{p,j}(z)|\,dz
\right)                                                       \\
&=
O\left(
|\ln\varepsilon_{p,j}|
\int_{B_{d/\varepsilon_{p,j}}(0)\setminus B_p(0)}
\frac{1}{(1+|z|)^{8-\delta}}
\,dz
\right)                                                       \\
&=
O\left(
\frac{|\ln\varepsilon_{p,j}|}{p^{4-\delta}}
\right)
=
O\left(
\frac1{p^{3-\delta}}
\right),
\end{split}
\end{equation}
Then \eqref{tta} and \eqref{tta1} imply that for $x\in  \displaystyle\bigcup^k_{j=1} B_{d}(x_{p,j})\backslash  B_{ 2p\varepsilon_{p,j}}(x_{p,j})$, it holds
\begin{equation}\label{luos1t}
\begin{split}
 p \sum^k_{j=1}\int_{ B_{ p\varepsilon_{p,j}}(x_{p,j})} G(y,x)
 (u^+_{p})^{p-1}(y) \xi_{p}(y)dy=
  \sum^k_{j=1}G(x_{p,j},x) A_{p,j}+O\Big(\frac{1}{p}\Big).
\end{split}
\end{equation}

Now we estimate the second term of the right part of \eqref{luos1}.
Similar to \eqref{3.553.55}, we can compute
\begin{equation}\label{luos1t12}
\begin{split}
&p
\int_{B_d(x_{p,j})\setminus B_{p\varepsilon_{p,j}}(x_{p,j})}
G(y,x)(u^+_{p})^{p-1}(y)\xi_p(y)\,dy                               \\
&=
O\left(
\int_{B_{d/\varepsilon_{p,j}}(0)\setminus B_p(0)}
G(x_{p,j}+\varepsilon_{p,j}z,x)
\Big[\Big(1+\frac{ Z_{p,j}(z)}{p}
  \Big)^+\Big]^{p-1}
dz
\right)                                                       \\
&=
O\left(
\int_{B_{d/\varepsilon_{p,j}}(0)\setminus B_p(0)}
\frac{
|\ln\varepsilon_{p,j}|
+
\left|
\ln\left|
z+\frac{x_{p,j}-x}{\varepsilon_{p,j}}
\right|
\right|
}
{(1+|z|)^{8-\delta}}
dz
\right)                                                       \\
&=
O\left(
\int_{B_{d/\varepsilon_{p,j}}(0)\setminus B_p(0)}
\frac{
\left|
\ln\left|
z+\frac{x_{p,j}-x}{\varepsilon_{p,j}}
\right|
\right|
}
{(1+|z|)^{8-\delta}}
dz
\right)
+
O\left(
\frac{|\ln\varepsilon_{p,j}|}{p^{4-\delta}}
\right).
\end{split}
\end{equation}
Also by H\"older's inequality, \eqref{nn3-29-03} and $|\ln \e_{p,j}|=O\big(p\big)$ by definition,  we get
\begin{equation}\label{luos1t1}
\begin{split}
 &\int_{B_{d/\varepsilon_{p,j}}(0)\setminus B_p(0)}
\frac{
\left|
\ln\left|
z+\frac{x_{p,j}-x}{\varepsilon_{p,j}}
\right|
\right|
}
{(1+|z|)^{8-\delta}}
dz                                                           \\
&\quad =
O\left[
\left(
\int_{B_{d/\varepsilon_{p,j}}(0)\setminus B_p(0)}
\left|
\ln\left|
z+\frac{x_{p,j}-x}{\varepsilon_{p,j}}
\right|
\right|^{q}
dz
\right)^{1/q}
\left(
\int_{B_{d/\varepsilon_{p,j}}(0)\setminus B_p(0)}
\frac1{(1+|z|)^{(8-\delta)q'}}
dz
\right)^{1/q'}
\right]                                                       \\
&\quad =
O\left(\frac1{p^{3-\delta}}\right).
\end{split}
\end{equation}
Hence from the fact that $|\ln \e_{p,j}|=O\big(p\big)$, \eqref{luos1}, \eqref{luos1t}, \eqref{luos1t12} and \eqref{luos1t1}, we get
\begin{equation*}
\begin{split}
 {\xi}_p(x)=&  \sum^k_{j=1} G(x_{p,j},x) A_{p,j}
 + O\left( \frac{1}{p^{}}\right),~\mbox{uniformly in}~x\in  \displaystyle\bigcup^k_{j=1}   B_{d}(x_{p,j})\backslash  B_{ 2p\varepsilon_{p,j}}(x_{p,j}).
\end{split}
\end{equation*}
Then the proof of \eqref{ddc} follows from \eqref{lsst} and recalling that
$A_{p,j}=o\big( \e_p\big)$ as in \eqref{st5-8-52}.

\noindent \textbf{Step  3.} Now we have $x_p\in \displaystyle\bigcup^k_{i=1}   B_{2p\varepsilon_{p,i}}(x_{p,i})\backslash  B_{R\varepsilon_{p,i}}(x_{p,i})$. Let  \(R_p = p^\alpha\) with \(0<\alpha<1/2\). For \(y \in B_{R_p}(0)\), as the proof of Lemma \ref{prop:expansionupwithdelta} we have \[
\Big[\Big(1+\frac{ Z_{p,j}(z)}{p}
  \Big)^+\Big]^{p-1}
=
\left( 1 + \frac{Z_{p,j}(z)}{p} \right)^{p-1}.
\] when $z \in B_{R_p}(0)$.
Assume that there exists $j\in \{1,\cdots,k\}$ such that
$$x_p\in B_{2p\varepsilon_{p,j}}(x_{p,j})\backslash  B_{R\varepsilon_{p,j}}(x_{p,j})~~\mbox{and}~~r_p:=|x_p|.$$
By translation, we suppose that  $x_{p,j}=0$. Then $\frac{r_p}{\e_{p,j}}\geq R\gg 1$. Taking
\begin{equation}\label{llts}
\widetilde{\xi}_p(y):=\xi_p(r_py),
\end{equation}
 where $y\in \Omega_{r_p}:=\{x,\,r_px\in\Omega\}$. For \(|y| \in (R_p, \frac{d}{r_{p}})\), we have
\begin{equation}\label{ring}
    \begin{split}
        \left[\left( 1 + \frac{Z_{p,j}\big(\frac{r_{p}}{\varepsilon_{p,j}} y\big)}{p} \right)^+\right]^{p-1} \le
\frac{C}{\left( 1 + \left| \frac{r_{p}}{\varepsilon_{p,j}} y \right| \right)^{8-\delta}}\longrightarrow 0.
    \end{split}
\end{equation}
Considering $y \in B_{R_p}(0)$ and by Lemma  \ref{llma} and \eqref{lsas}, 
we have
\begin{equation}\label{neiquyu}
    \begin{split}
        \Delta^2\widetilde{\xi}_p(y)
&=
r_p^4p u_p^{p-1}(r_py)\widetilde{\xi}_p(y)                                      \\
&=
\left(\frac{r_p}{\varepsilon_{p,j}}\right)^4
\left[\left( 1 + \frac{Z_{p,j}\big(\frac{r_{p}}{\varepsilon_{p,j}} y\big)}{p} \right)^+\right]^{p-1}
\widetilde{\xi}_p(y)                                                            \\
&=
\left(\frac{r_p}{\varepsilon_{p,j}}\right)^4
\exp \left[{(p-1)\ln\left(
1+
\frac{
Z_{p,j}\left(\frac{r_p}{\varepsilon_{p,j}}y\right)
}{p}
\right)}\right]
\widetilde{\xi}_p(y)                                                            \\
&\le
C
\left(\frac{r_p}{\varepsilon_{p,j}}\right)^4
\frac{1}{
\left(
1+\left|
\frac{r_p}{\varepsilon_{p,j}}y
\right|
\right)^{8-\delta}
}
\longrightarrow 0
    \end{split}
\end{equation}
Hence from \eqref{ring} and \eqref{neiquyu}, there exists a bounded function $\xi$ such that
\[
        \widetilde{\xi}_p\to \xi
        \quad
        \text{in }
        C^3_{\mathrm{loc}}(\mathbb R^4\setminus\{0\}),
        \qquad
        \Delta^2\xi=0
        \quad
        \text{in }
        \mathbb R^4\setminus\{0\}.
\]
Since \(\xi\) is bounded, the singularity at \(0\) is removable. Hence
\[
        \Delta^2\xi=0
        \quad
        \text{in }
        \mathbb R^4 .
\]
By the Liouville theorem for bounded biharmonic functions, \(\xi\) is a
constant. From
\[
        \widetilde{\xi}_p\left(\frac{x_p}{r_p}\right)
        =
        \xi_p(x_p)=1,
\]
we get \(\xi\equiv1\), namely
\begin{equation}\label{altts}
\widetilde{\xi}_p\to1
        \quad
        \text{in }
        C^3_{\mathrm{loc}}(\mathbb R^4\setminus\{0\}).
\end{equation}

Now let $\xi_{p,j}(y):=\xi_p(\e_{p,j}y+x_{p,j})$, then we have
\[
        \Delta^2\xi_{p,j}
        -
        e^Z\xi_{p,j}
        =
        f_p
        :=
        \left[
        \left(\Big(1+\frac{ Z_{p,j}(z)}{p}
  \Big)^+\right)^{p-1}
        -
        e^Z
        \right]\xi_{p,j}.
\]
Similarly as the proof of Proposition \ref{key-1},
 we can verify that
\[
        |f_p(y)|
        \le
        \frac{C}{
        p(1+|y|)^{8-\delta-\tau/2}
        },
        \qquad
        \text{for some small fixed } \tau\in(0,1).
\]
The average of $\xi_{p,j}$, denoted by
$\xi_{p,j}^{*}(r)
        :=
        \frac1{|\mathbb S^3|}
        \int_{\mathbb S^3}
        \xi_{p,j}(r\theta)\,d\theta$, solves the radial fourth-order ODE
\[
        \Delta_r^2\xi_{p,j}^{*}
        -
        e^Z\xi_{p,j}^{*}
        =
        f_p^{*}(r),
\]
where
\[
        f_p^{*}(r)
        :=
        \frac1{|\mathbb S^3|}
        \int_{\mathbb S^3}
        f_p(r\theta)\,d\theta  \qquad and \qquad \Delta_r v
        =
        v''+\frac3r v'.
\]
Observe that $\eta_0(r):=
        \frac{8\sqrt6-r^2}{8\sqrt6+r^2}$
is a bounded radial solution of 
\[
        \Delta_r^2\eta_0-e^Z\eta_0=0.
\]
Let $\eta_0$, $\eta_1$, $\eta_2$ and $\eta_3$ be a fundamental system of the homogeneous radial equation \[
        \Delta_r^2\eta-e^Z\eta=0,
\]
chosen so that \(\eta_0\) is the above bounded radial kernel function,
\(\eta_1\) is the regular solution growing like \(r^2\) at infinity, and
\(\eta_2,\eta_3\) are singular at \(r=0\). By the radial ODE theory, the
solution of (4.43) can be written as
\[
        \xi_{p,j}^{*}(r)
        =
        C_{0,p}\eta_0(r)
        +
        C_{1,p}\eta_1(r)
        +
        C_{2,p}\eta_2(r)
        +
        C_{3,p}\eta_3(r)
        +
        V_p(r),
\]
where \(V_p\) is a particular solution of
\[
        \Delta_r^2V_p-e^ZV_p=f_p^{*}.
\]
Since \(\xi_{p,j}^{*}\) is smooth and bounded at \(r=0\), the singular
coefficients vanish:
\[
        C_{2,p}=C_{3,p}=0.
\]
Moreover, since \(a_j=0\), the bounded radial kernel component vanishes in the
limit, and hence
\[
        C_{0,p}=o(1).
\]
The global bound
\[
        \|\xi_{p,j}\|_{L^\infty}\le1
\]
implies that the coefficient of the growing regular radial solution satisfies
\[
        C_{1,p}=o\left(\frac1{p^2}\right),
\]
and therefore
\[
        C_{1,p}\eta_1(r)=o(1),
        \qquad
        1\le r\le 2p .
\]
By the variation-of-constants formula for the fourth-order radial operator $\Delta_r^2-e^Z$ 
and using
\[
        |f_p^{*}(r)|
        \le
        \frac{C}{
        p(1+r)^{8-\delta-\tau/2}
        },
\]we get
\[
\begin{aligned}
|V_p(r)|
\le
\frac{C}{p}
\int_0^r
\frac{s^3(1+\ln(1+s))}
{(1+s)^{8-\delta-\tau/2}}
\,ds                                                    
+
\frac{C\ln(2+r)}{p}
\int_0^r
\frac{s^3}
{(1+s)^{8-\delta-\tau/2}}
\,ds                                                     
\le
\frac{C\ln(2+r)}{p}.
\end{aligned}
\]
Consequently,
\[
        |\xi_{p,j}^{*}(r)|
        \le
        \frac{C\ln(2+r)}p
        +
        o(1),
        \qquad
        1\le r\le 2p .
\]
Lastly, we evaluate $\xi^*_{p,j}$ at $\frac{r_p}{\e_{p,j}}$.
Indeed, we have $\frac{r_p}{\e_{p,j}}\leq 2p$ and
\begin{equation}\label{Stsa}
 \xi^*_p\big(\frac{r_p}{\e_{p,j}}\big)\leq \frac{C\ln \frac{r_p}{\e_{p,j}}}{p}+o(1)\to 0.
\end{equation}
However, by \eqref{altts}, we know
\begin{equation}\label{Stsat}
\widetilde{\xi}_p(x)\to 1,~~\mbox{for any}~~|x|=1.
\end{equation}
Then by \eqref{llts} and \eqref{Stsat}, we find
\begin{equation*}
\begin{split}
 \xi_{p,j}^{*}
\left(
\frac{r_p}{\varepsilon_{p,j}}
\right)
&=
\frac1{|\mathbb S^3|}
\int_{\mathbb S^3}
\xi_{p,j}
\left(
\frac{r_p}{\varepsilon_{p,j}}\theta
\right)
\,d\theta                                                     \\
&=
\frac1{|\mathbb S^3|}
\int_{\mathbb S^3}
\xi_p(r_p\theta)
\,d\theta                                                     \\
&=
\frac1{|\mathbb S^3|}
\int_{\mathbb S^3}
\widetilde{\xi}_p(\theta)
\,d\theta                                                  \ge c_0>0,
\end{split}\end{equation*}
which is a contraction to \eqref{Stsa}.
This completes the proof of  Theorem \ref{th1.1}.

\end{proof}

\section{Proofs of (\ref{abb1-1})--(\ref{aluo41}) involving Green's function}\label{s6}
\setcounter{equation}{0}

In this appendix, we give the proofs of \eqref{abb1-1}--\eqref{aluo41} involving Green's function. Now, we give the details for completeness.

\begin{proof}[\underline{\textbf{Proof of \eqref{abb1-1}}}]
By the bilinearity of $P_j(u,v)$, we have
\begin{align}
P_j\bigl(G(x_{p,s},x),\partial_hG(x_{p,m},x)\bigr)
={}&P_j\bigl(S(x_{p,s},x),\partial_hH(x_{p,m},x)\bigr)\notag\\
&+P_j\bigl(H(x_{p,s},x),\partial_hH(x_{p,m},x)\bigr)\notag\\
&+P_j\bigl(G(x_{p,s},x),\partial_hS(x_{p,j},x)\bigr).
\label{eq:P28-decomp}
\end{align}

We first consider the case $m=j$ and $s\neq j$. Since $G(x_{p,s},x)$ is smooth near $x_{p,j}$, while $\partial_hH(x_{p,j},x)$ is also smooth near $x_{p,j}$, it is enough to compute
\[
P_j\bigl(G(x_{p,s},x),\partial_hS(x_{p,j},x)\bigr).
\]
Write
\[
x=x_{p,j}+\theta\xi \qquad \text{on } \partial B_\theta(x_{p,j}),
\]
where $|\xi|=1$. Then
\[
x_h-x_{p,j,h}=\theta\xi_h,\qquad \nu=\xi,\qquad d\sigma=\theta^3\,d\omega.
\]
Since
\[
S(x_{p,j},x)=-\frac1{8\pi^2}\ln|x-x_{p,j}|,
\]
we have
\[
\partial_hS(x_{p,j},x)
=
\frac{\partial S(y,x)}{\partial y_h}\Big|_{y=x_{p,j}}
=
\frac{x_h-x_{p,j,h}}{8\pi^2|x-x_{p,j}|^2}
=
\frac{\xi_h}{8\pi^2\theta},
\]
hence
\[
\Delta_x\bigl(\partial_hS(x_{p,j},x)\bigr)
=
-\frac{\xi_h}{2\pi^2\theta^3},
\qquad
\partial_\nu\bigl(\partial_hS(x_{p,j},x)\bigr)
=
-\frac{\xi_h}{8\pi^2\theta^2},
\]
\[
\partial_\nu\Delta_x\bigl(\partial_hS(x_{p,j},x)\bigr)
=
\frac{3\xi_h}{2\pi^2\theta^4},
\]
and
\[
\Bigl\langle x-x_{p,j},\nabla_x\bigl(\partial_hS(x_{p,j},x)\bigr)\Bigr\rangle
=
-\frac{\xi_h}{8\pi^2\theta},
\qquad
\partial_\nu\Bigl\langle x-x_{p,j},\nabla_x\bigl(\partial_hS(x_{p,j},x)\bigr)\Bigr\rangle
=
\frac{\xi_h}{8\pi^2\theta^2}.
\]
Now expand $G(x_{p,s},x)$ at $x=x_{p,j}$:
\[
G(x_{p,s},x)
=
G(x_{p,s},x_{p,j})
+\theta\sum_{\ell=1}^4D_{x_\ell}G(x_{p,s},x_{p,j})\xi_\ell
+O(\theta^2),
\]
so that
\[
\partial_\nu G(x_{p,s},x)
=
\sum_{\ell=1}^4D_{x_\ell}G(x_{p,s},x_{p,j})\xi_\ell+O(\theta),
\]
\[
\Delta_xG(x_{p,s},x)=\Delta_xG(x_{p,s},x_{p,j})+O(\theta),
\qquad
\partial_\nu\Delta_xG(x_{p,s},x)=O(1),
\]
and
\[
\partial_\nu\langle x-x_{p,j},\nabla_xG(x_{p,s},x)\rangle
=
\sum_{\ell=1}^4D_{x_\ell}G(x_{p,s},x_{p,j})\xi_\ell+O(\theta).
\]
Substituting these identities into the definition of $P_j$, we get
\begin{align}
P_j\bigl(G(x_{p,s},x),\partial_hS(x_{p,j},x)\bigr)
={}&\int_{\partial B_\theta(x_{p,j})}
\Bigl[
-\theta\,\Delta_xG(x_{p,s},x)\,\Delta_x\bigl(\partial_hS(x_{p,j},x)\bigr)
\notag\\
&\qquad
-\theta\,\partial_\nu\Delta_x\bigl(\partial_hS(x_{p,j},x)\bigr)\,\partial_\nu G(x_{p,s},x)
\notag\\
&\qquad
+\Delta_xG(x_{p,s},x)\,
\partial_\nu\Bigl\langle x-x_{p,j},\nabla_x\bigl(\partial_hS(x_{p,j},x)\bigr)\Bigr\rangle
\notag\\
&\qquad
+\Delta_x\bigl(\partial_hS(x_{p,j},x)\bigr)\,
\partial_\nu\langle x-x_{p,j},\nabla_xG(x_{p,s},x)\rangle
\Bigr]\,d\sigma.
\label{eq:P28-main}
\end{align}

We now compute each term. The first term equals
\[
\int_{\partial B_\theta(x_{p,j})}
\frac{\Delta_xG(x_{p,s},x_{p,j})\,\xi_h}{2\pi^2\theta^2}\,d\sigma+O(\theta),
\]
hence it is zero by
\[
\int_{S^3}\xi_h\,d\omega=0.
\]
The third term is
\[
\int_{\partial B_\theta(x_{p,j})}
\frac{\Delta_xG(x_{p,s},x_{p,j})\,\xi_h}{8\pi^2\theta^2}\,d\sigma+O(\theta),
\]
which is also zero by oddness.

For the second term, we obtain
\begin{align*}
&-\int_{\partial B_\theta(x_{p,j})}
\theta\cdot \frac{3\xi_h}{2\pi^2\theta^4}
\left(
\sum_{\ell=1}^4D_{x_\ell}G(x_{p,s},x_{p,j})\xi_\ell+O(\theta)
\right)\,d\sigma \\
={}&
-\frac{3}{2\pi^2\theta^3}
\sum_{\ell=1}^4D_{x_\ell}G(x_{p,s},x_{p,j})
\int_{\partial B_\theta(x_{p,j})}\xi_h\xi_\ell\,d\sigma+o(1).
\end{align*}
For the fourth term, we have
\begin{align*}
&\int_{\partial B_\theta(x_{p,j})}
\left(-\frac{\xi_h}{2\pi^2\theta^3}\right)
\left(
\sum_{\ell=1}^4D_{x_\ell}G(x_{p,s},x_{p,j})\xi_\ell+O(\theta)
\right)\,d\sigma \\
={}&
-\frac{1}{2\pi^2\theta^3}
\sum_{\ell=1}^4D_{x_\ell}G(x_{p,s},x_{p,j})
\int_{\partial B_\theta(x_{p,j})}\xi_h\xi_\ell\,d\sigma+o(1).
\end{align*}
Therefore
\begin{align*}
P_j\bigl(G(x_{p,s},x),\partial_hS(x_{p,j},x)\bigr)
={}&
-\frac{2}{\pi^2\theta^3}
\sum_{\ell=1}^4D_{x_\ell}G(x_{p,s},x_{p,j})
\int_{\partial B_\theta(x_{p,j})}\xi_h\xi_\ell\,d\sigma+o(1).
\end{align*}
Since
\[
\int_{\partial B_\theta(x_{p,j})}\xi_h\xi_\ell\,d\sigma
=
\frac{|S^3|}{4}\theta^3\delta_{h\ell}
=
\frac{\pi^2}{2}\theta^3\delta_{h\ell},
\]
we get
\[
P_j\bigl(G(x_{p,s},x),\partial_hS(x_{p,j},x)\bigr)
=
-D_{x_h}G(x_{p,s},x_{p,j})+o(1).
\]
Hence
\[
P_j\bigl(G(x_{p,s},x),\partial_hG(x_{p,j},x)\bigr)
=
-D_{x_h}G(x_{p,s},x_{p,j}),
\]
which proves the second line of \eqref{abb1-1}.

Next, assume that $s=m=j$. By \eqref{eq:P28-decomp},
\begin{align}
P_j\bigl(G(x_{p,j},x),\partial_hG(x_{p,j},x)\bigr)
={}&P_j\bigl(S(x_{p,j},x),\partial_hS(x_{p,j},x)\bigr)\notag\\
&+P_j\bigl(S(x_{p,j},x),\partial_hH(x_{p,j},x)\bigr)\notag\\
&+P_j\bigl(H(x_{p,j},x),\partial_hS(x_{p,j},x)\bigr)\notag\\
&+P_j\bigl(H(x_{p,j},x),\partial_hH(x_{p,j},x)\bigr).
\label{eq:P28-diag}
\end{align}
A direct computation gives
\[
P_j\bigl(S(x_{p,j},x),\partial_hS(x_{p,j},x)\bigr)=0.
\]
Moreover,
\[
P_j\bigl(S(x_{p,j},x),\partial_hH(x_{p,j},x)\bigr)=o(1),
\qquad
P_j\bigl(H(x_{p,j},x),\partial_hH(x_{p,j},x)\bigr)=o(1),
\]
since $H(x_{p,j},x)$ is smooth near $x_{p,j}$. Therefore
\[
P_j\bigl(G(x_{p,j},x),\partial_hG(x_{p,j},x)\bigr)
=
P_j\bigl(H(x_{p,j},x),\partial_hS(x_{p,j},x)\bigr)+o(1).
\]
Applying the computation above with $H(x_{p,j},x)$ in place of $G(x_{p,s},x)$, we obtain
\[
P_j\bigl(H(x_{p,j},x),\partial_hS(x_{p,j},x)\bigr)
=
-D_{x_h}H(x_{p,j},x_{p,j}).
\]
Hence
\[
P_j\bigl(G(x_{p,j},x),\partial_hG(x_{p,j},x)\bigr)
=
-D_{x_h}H(x_{p,j},x_{p,j}).
\]
Since
\[
R(x)=H(x,x)
\]
and $H(x,y)=H(y,x)$, we have
\[
\partial_hR(x_{p,j})
=
D_{x_h}H(x_{p,j},x_{p,j})+\partial_hH(x_{p,j},x_{p,j})
=
2D_{x_h}H(x_{p,j},x_{p,j}),
\]
and therefore
\[
P_j\bigl(G(x_{p,j},x),\partial_hG(x_{p,j},x)\bigr)
=
-\frac12\,\partial_hR(x_{p,j}).
\]
This proves the first line of \eqref{abb1-1}.

Finally, if $m\neq j$, then $\partial_hG(x_{p,m},x)$ is smooth near $x_{p,j}$. Hence every term in \eqref{eq:P28-decomp} is $o(1)$, and so
\[
P_j\bigl(G(x_{p,s},x),\partial_hG(x_{p,m},x)\bigr)=0.
\]
This proves \eqref{abb1-1}.

\end{proof}
\begin{proof} [\underline{\textbf{Proof of \eqref{aluo1}}}]
By the bilinearity of $Q_j(u,v)$, we have
\begin{align}
Q_j\bigl(G(x_{p,m},x),G(x_{p,s},x)\bigr)
={}&Q_j\bigl(S(x_{p,m},x),H(x_{p,s},x)\bigr)\notag\\
&+Q_j\bigl(H(x_{p,m},x),H(x_{p,s},x)\bigr)\notag\\
&+Q_j\bigl(G(x_{p,m},x),S(x_{p,s},x)\bigr).
\label{eq:Q29-decomp}
\end{align}

We first consider the case $s=j$ and $m\neq j$. Since $G(x_{p,m},x)$ is smooth near $x_{p,j}$, while $H(x_{p,j},x)$ is also smooth near $x_{p,j}$, it is enough to compute
\[
Q_j\bigl(G(x_{p,m},x),S(x_{p,j},x)\bigr).
\]
Write
\[
x=x_{p,j}+\theta\xi \qquad \text{on } \partial B_\theta(x_{p,j}),
\]
where $|\xi|=1$. Then
\[
x_i-x_{p,j,i}=\theta\xi_i,\qquad \nu=\xi,\qquad d\sigma=\theta^3\,d\omega.
\]
Since
\[
S(x_{p,j},x)=-\frac1{8\pi^2}\ln|x-x_{p,j}|,
\]
we have on $\partial B_\theta(x_{p,j})$,
\[
\Delta_xS(x_{p,j},x)=-\frac1{4\pi^2\theta^2},
\qquad
\partial_\nu\Delta_xS(x_{p,j},x)=\frac1{2\pi^2\theta^3},
\]
\[
\partial_{x_i}S(x_{p,j},x)=-\frac{\xi_i}{8\pi^2\theta},
\qquad
\partial_\nu\partial_{x_i}S(x_{p,j},x)=\frac{\xi_i}{8\pi^2\theta^2}.
\]
Now expand $G(x_{p,m},x)$ at $x=x_{p,j}$:
\[
G(x_{p,m},x)
=
G(x_{p,m},x_{p,j})
+\theta\sum_{\ell=1}^4D_{x_\ell}G(x_{p,m},x_{p,j})\xi_\ell
+O(\theta^2),
\]
hence
\[
\partial_{x_i}G(x_{p,m},x)=D_{x_i}G(x_{p,m},x_{p,j})+O(\theta), \qquad
\partial_\nu\partial_{x_i}G(x_{p,m},x)=O(1),
\]
\[
\Delta_xG(x_{p,m},x)=\Delta_xG(x_{p,m},x_{p,j})+O(\theta),
\qquad \;
\partial_\nu\Delta_xG(x_{p,m},x)=O(1).
\]
Substituting these identities into the definition of $Q_j$, we get
\begin{align}
Q_j\bigl(G(x_{p,m},x),S(x_{p,j},x)\bigr)
={}&\int_{\partial B_\theta(x_{p,j})}
\Bigl[
\Delta_xG(x_{p,m},x)\Delta_xS(x_{p,j},x)\frac{x_i-x_{p,j,i}}{\theta}
\notag\\
&\qquad
+\partial_\nu\Delta_xS(x_{p,j},x)\partial_{x_i}G(x_{p,m},x)
+\partial_\nu\Delta_xG(x_{p,m},x)\partial_{x_i}S(x_{p,j},x)
\notag\\
&\qquad
-\Delta_xG(x_{p,m},x)\partial_\nu\partial_{x_i}S(x_{p,j},x)
-\Delta_xS(x_{p,j},x)\partial_\nu\partial_{x_i}G(x_{p,m},x)
\Bigr]\,d\sigma.
\label{eq:Q29-main}
\end{align}
The first term equals
\[
-\int_{\partial B_\theta(x_{p,j})}
\frac{\Delta_xG(x_{p,m},x_{p,j})\,\xi_i}{4\pi^2\theta^2}\,d\sigma+O(\theta),
\]
hence it is zero by
\[
\int_{S^3}\xi_i\,d\omega=0.
\]
The third term is of order $O(\theta^{-1})$ pointwise, hence its integral is $O(\theta^2)$. The fourth term equals
\[
-\int_{\partial B_\theta(x_{p,j})}
\frac{\Delta_xG(x_{p,m},x_{p,j})\,\xi_i}{8\pi^2\theta^2}\,d\sigma+O(\theta),
\]
which is again zero by oddness. The fifth term is of order $O(\theta^{-2})$ pointwise, hence its integral is $O(\theta)$.

Therefore the only surviving contribution comes from the second term:
\begin{align*}
&\int_{\partial B_\theta(x_{p,j})}
\partial_\nu\Delta_xS(x_{p,j},x)\partial_{x_i}G(x_{p,m},x)\,d\sigma \\
={}&
\int_{\partial B_\theta(x_{p,j})}
\frac1{2\pi^2\theta^3}
\Bigl(D_{x_i}G(x_{p,m},x_{p,j})+O(\theta)\Bigr)\,d\sigma.
\end{align*}
Since
\[
|\partial B_\theta(x_{p,j})|=2\pi^2\theta^3,
\]
we obtain
\[
Q_j\bigl(G(x_{p,m},x),S(x_{p,j},x)\bigr)
=
D_{x_i}G(x_{p,m},x_{p,j})+o(1).
\]
Hence
\[
Q_j\bigl(G(x_{p,m},x),G(x_{p,j},x)\bigr)
=
D_{x_i}G(x_{p,m},x_{p,j}),
\]
which proves the second line of \eqref{aluo1}. By symmetry, the third line follows.

Next, assume $s=m=j$. By \eqref{eq:Q29-decomp},
\begin{align}
Q_j\bigl(G(x_{p,j},x),G(x_{p,j},x)\bigr)
={}&Q_j\bigl(S(x_{p,j},x),S(x_{p,j},x)\bigr)
+Q_j\bigl(S(x_{p,j},x),H(x_{p,j},x)\bigr)\notag\\
&+Q_j\bigl(H(x_{p,j},x),S(x_{p,j},x)\bigr)
+Q_j\bigl(H(x_{p,j},x),H(x_{p,j},x)\bigr).
\label{eq:Q29-diag}
\end{align}
A direct computation gives
\[
Q_j\bigl(S(x_{p,j},x),S(x_{p,j},x)\bigr)=0.
\]
Moreover,
\[
Q_j\bigl(H(x_{p,j},x),H(x_{p,j},x)\bigr)=o(1),
\]
since all quantities are smooth near $x_{p,j}$.
Applying the previous computation with $H(x_{p,j},x)$ in place of $G(x_{p,m},x)$, we obtain
\[
Q_j\bigl(H(x_{p,j},x),S(x_{p,j},x)\bigr)
=
D_{x_i}H(x_{p,j},x_{p,j}),
\]
and similarly
\[
Q_j\bigl(S(x_{p,j},x),H(x_{p,j},x)\bigr)
=
D_{x_i}H(x_{p,j},x_{p,j}).
\]
Therefore
\[
Q_j\bigl(G(x_{p,j},x),G(x_{p,j},x)\bigr)
=
2D_{x_i}H(x_{p,j},x_{p,j}).
\]
Since $R(x)=H(x,x)$
and $H(x,y)=H(y,x)$, we have
\[
\partial_iR(x_{p,j})
=
D_{x_i}H(x_{p,j},x_{p,j})+\partial_iH(x_{p,j},x_{p,j})
=
2D_{x_i}H(x_{p,j},x_{p,j}),
\]
hence
\[
Q_j\bigl(G(x_{p,j},x),G(x_{p,j},x)\bigr)=\partial_iR(x_{p,j}).
\]
This proves the first line of \eqref{aluo1}.

Finally, if $s,m\neq j$, then both factors are smooth near $x_{p,j}$, so every term in \eqref{eq:Q29-decomp} is $o(1)$. Therefore
\[
Q_j\bigl(G(x_{p,m},x),G(x_{p,s},x)\bigr)=0.
\]
This proves \eqref{aluo1}.
\end{proof}
\begin{proof} [\underline{\textbf{Proof of \eqref{aluo41}}}]
By the bilinearity of $Q_j(u,v)$, we have
\begin{align}
Q_j\bigl(G(x_{p,m},x),\partial_hG(x_{p,s},x)\bigr)
={}&Q_j\bigl(S(x_{p,m},x),\partial_hH(x_{p,s},x)\bigr)\notag\\
&+Q_j\bigl(H(x_{p,m},x),\partial_hH(x_{p,s},x)\bigr)\notag\\
&+Q_j\bigl(G(x_{p,m},x),\partial_hS(x_{p,s},x)\bigr).
\label{eq:Q210-decomp}
\end{align}
We first consider the case $s=j$ and $m\neq j$. Since $G(x_{p,m},x)$ is smooth near $x_{p,j}$, while $\partial_hH(x_{p,j},x)$ is also smooth near $x_{p,j}$, it is enough to compute
\[
Q_j\bigl(G(x_{p,m},x),\partial_hS(x_{p,j},x)\bigr).
\]
Write
\[
x=x_{p,j}+\theta\xi \qquad \text{on } \partial B_\theta(x_{p,j}),
\]
where $|\xi|=1$. Then
\[
x_i-x_{p,j,i}=\theta\xi_i,\qquad
x_h-x_{p,j,h}=\theta\xi_h,\qquad
\nu=\xi,\qquad
d\sigma=\theta^3\,d\omega.
\]
Since
\[
S(x_{p,j},x)=-\frac1{8\pi^2}\ln|x-x_{p,j}|,
\]
we have
\[
\partial_hS(x_{p,j},x)
=
\frac{\partial S(y,x)}{\partial y_h}\Big|_{y=x_{p,j}}
=
\frac{x_h-x_{p,j,h}}{8\pi^2|x-x_{p,j}|^2}
=
\frac{\xi_h}{8\pi^2\theta}.
\]
Moreover,
\[
\Delta_x\bigl(\partial_hS(x_{p,j},x)\bigr)
=
-\frac{\xi_h}{2\pi^2\theta^3},
\qquad
\partial_\nu\Delta_x\bigl(\partial_hS(x_{p,j},x)\bigr)
=
\frac{3\xi_h}{2\pi^2\theta^4},
\]
\[
\partial_{x_i}\bigl(\partial_hS(x_{p,j},x)\bigr)
=
\frac{\delta_{ih}}{8\pi^2\theta^2}
-\frac{\xi_i\xi_h}{4\pi^2\theta^2},
\]
\[
\partial_\nu\partial_{x_i}\bigl(\partial_hS(x_{p,j},x)\bigr)
=
-\frac{\delta_{ih}}{4\pi^2\theta^3}
+\frac{\xi_i\xi_h}{2\pi^2\theta^3}.
\]
Now expand $G(x_{p,m},x)$ at $x=x_{p,j}$:
\[
\partial_{x_i}G(x_{p,m},x)
=
D_{x_i}G(x_{p,m},x_{p,j})
+\theta\sum_{\ell=1}^4D^2_{x_ix_\ell}G(x_{p,m},x_{p,j})\xi_\ell
+O(\theta^2),
\]
\[
\partial_\nu\partial_{x_i}G(x_{p,m},x)
=
\sum_{\ell=1}^4D^2_{x_ix_\ell}G(x_{p,m},x_{p,j})\xi_\ell+O(\theta),
\]
\[
\Delta_xG(x_{p,m},x)=\Delta_xG(x_{p,m},x_{p,j})+O(\theta),
\qquad
\partial_\nu\Delta_xG(x_{p,m},x)=O(1).
\]
Substituting these identities into the definition of $Q_j$, we obtain
\begin{align}
Q_j\bigl(G(x_{p,m},x),\partial_hS(x_{p,j},x)\bigr)
={}&\int_{\partial B_\theta(x_{p,j})}
\Bigl[
\Delta_xG(x_{p,m},x)\Delta_x\bigl(\partial_hS(x_{p,j},x)\bigr)\frac{x_i-x_{p,j,i}}{\theta}
\notag\\
&\qquad
+\partial_\nu\Delta_x\bigl(\partial_hS(x_{p,j},x)\bigr)\partial_{x_i}G(x_{p,m},x)
+\partial_\nu\Delta_xG(x_{p,m},x)\partial_{x_i}\bigl(\partial_hS(x_{p,j},x)\bigr)
\notag\\
&\qquad
-\Delta_xG(x_{p,m},x)\partial_\nu\partial_{x_i}\bigl(\partial_hS(x_{p,j},x)\bigr)
-\Delta_x\bigl(\partial_hS(x_{p,j},x)\bigr)\partial_\nu\partial_{x_i}G(x_{p,m},x)
\Bigr]\,d\sigma.
\label{eq:Q210-main}
\end{align}
The terms involving only $\Delta_xG(x_{p,m},x_{p,j})$ are odd and integrate to zero. The term involving
\[
\partial_\nu\Delta_xG(x_{p,m},x)\partial_{x_i}\bigl(\partial_hS(x_{p,j},x)\bigr)
\]
is of order $O(\theta^{-2})$ pointwise, hence its integral is $O(\theta)$.

Thus the leading contribution comes from the second and fifth terms. For the second term, we have
\begin{align*}
&\int_{\partial B_\theta(x_{p,j})}
\frac{3\xi_h}{2\pi^2\theta^4}
\left(
\theta\sum_{\ell=1}^4D^2_{x_ix_\ell}G(x_{p,m},x_{p,j})\xi_\ell+O(\theta^2)
\right)\,d\sigma \\
={}&
\frac{3}{2\pi^2\theta^3}
\sum_{\ell=1}^4D^2_{x_ix_\ell}G(x_{p,m},x_{p,j})
\int_{\partial B_\theta(x_{p,j})}\xi_h\xi_\ell\,d\sigma+o(1).
\end{align*}
For the fifth term, since
\[
-\Delta_x\bigl(\partial_hS(x_{p,j},x)\bigr)
=
\frac{\xi_h}{2\pi^2\theta^3},
\]
we get
\begin{align*}
&\int_{\partial B_\theta(x_{p,j})}
\frac{\xi_h}{2\pi^2\theta^3}
\left(
\sum_{\ell=1}^4D^2_{x_ix_\ell}G(x_{p,m},x_{p,j})\xi_\ell+O(\theta)
\right)\,d\sigma \\
={}&
\frac{1}{2\pi^2\theta^3}
\sum_{\ell=1}^4D^2_{x_ix_\ell}G(x_{p,m},x_{p,j})
\int_{\partial B_\theta(x_{p,j})}\xi_h\xi_\ell\,d\sigma+o(1).
\end{align*}
Therefore
\begin{align*}
Q_j\bigl(G(x_{p,m},x),\partial_hS(x_{p,j},x)\bigr)
={}&
\frac{2}{\pi^2\theta^3}
\sum_{\ell=1}^4D^2_{x_ix_\ell}G(x_{p,m},x_{p,j})
\int_{\partial B_\theta(x_{p,j})}\xi_h\xi_\ell\,d\sigma+o(1).
\end{align*}
Since
\[
\int_{\partial B_\theta(x_{p,j})}\xi_h\xi_\ell\,d\sigma
=
\frac{|S^3|}{4}\theta^3\delta_{h\ell}
=
\frac{\pi^2}{2}\theta^3\delta_{h\ell},
\]
we obtain
\[
Q_j\bigl(G(x_{p,m},x),\partial_hS(x_{p,j},x)\bigr)
=
D^2_{x_ix_h}G(x_{p,m},x_{p,j})+o(1).
\]
Hence
\[
Q_j\bigl(G(x_{p,m},x),\partial_hG(x_{p,j},x)\bigr)
=
D^2_{x_ix_h}G(x_{p,m},x_{p,j}),
\]
which proves the third line of \eqref{aluo41}.

Next, assume $m=j$ and $s\neq j$. Since $\partial_hG(x_{p,s},x)$ is smooth near $x_{p,j}$, repeating the computation in the proof of \eqref{aluo41} with $\partial_hG(x_{p,s},x)$ in place of $G(x_{p,m},x)$, we get
\[
Q_j\bigl(G(x_{p,j},x),\partial_hG(x_{p,s},x)\bigr)
=
D_{x_i}\partial_hG(x_{p,s},x_{p,j}),
\]
which proves the second line of \eqref{aluo41}.

Finally, assume $s=m=j$. By \eqref{eq:Q210-decomp},
\begin{align}
Q_j\bigl(G(x_{p,j},x),\partial_hG(x_{p,j},x)\bigr)
={}&Q_j\bigl(S(x_{p,j},x),\partial_hS(x_{p,j},x)\bigr)
+Q_j\bigl(S(x_{p,j},x),\partial_hH(x_{p,j},x)\bigr)\notag\\
&+Q_j\bigl(H(x_{p,j},x),\partial_hS(x_{p,j},x)\bigr)
+Q_j\bigl(H(x_{p,j},x),\partial_hH(x_{p,j},x)\bigr).
\label{eq:Q210-diag}
\end{align}
A direct computation gives
\[
Q_j\bigl(S(x_{p,j},x),\partial_hS(x_{p,j},x)\bigr)=0.
\]
Moreover,
\[
Q_j\bigl(H(x_{p,j},x),\partial_hH(x_{p,j},x)\bigr)=o(1).
\]
Applying the previous computation with $H(x_{p,j},x)$ in place of $G(x_{p,m},x)$, we obtain
\[
Q_j\bigl(H(x_{p,j},x),\partial_hS(x_{p,j},x)\bigr)
=
D^2_{x_ix_h}H(x_{p,j},x_{p,j}),
\]
and repeating the computation in the proof of \eqref{aluo41} with $\partial_hH(x_{p,j},x)$ in place of $G(x_{p,m},x)$, we get
\[
Q_j\bigl(S(x_{p,j},x),\partial_hH(x_{p,j},x)\bigr)
=
D_{x_i}\partial_hH(x_{p,j},x_{p,j}).
\]
Therefore
\[
Q_j\bigl(G(x_{p,j},x),\partial_hG(x_{p,j},x)\bigr)
=
D_{x_i}\partial_hH(x_{p,j},x_{p,j})
+
D^2_{x_ix_h}H(x_{p,j},x_{p,j}).
\]
On the other hand,
\[
R(x)=H(x,x),
\]
hence
\[
\partial_{ih}^2R(x_{p,j})
=
D^2_{x_ix_h}H(x_{p,j},x_{p,j})
+2D_{x_i}\partial_hH(x_{p,j},x_{p,j})
+\partial_{ih}^2H(x_{p,j},x_{p,j}).
\]
Since $H(x,y)=H(y,x)$, we also have
\[
\partial_{ih}^2H(x_{p,j},x_{p,j})
=
D^2_{x_ix_h}H(x_{p,j},x_{p,j}),
\]
thus
\[
\partial_{ih}^2R(x_{p,j})
=
2\Bigl(
D^2_{x_ix_h}H(x_{p,j},x_{p,j})
+
D_{x_i}\partial_hH(x_{p,j},x_{p,j})
\Bigr),
\]
and therefore
\[
Q_j\bigl(G(x_{p,j},x),\partial_hG(x_{p,j},x)\bigr)
=
\frac12\,\partial_{ih}^2R(x_{p,j}),
\]
which proves the first line of \eqref{aluo41}.

Finally, if $s,m\neq j$, then both factors are smooth near $x_{p,j}$, so every term in \eqref{eq:Q210-decomp} is $o(1)$. Therefore
\[
Q_j\bigl(G(x_{p,m},x),\partial_hG(x_{p,s},x)\bigr)=0.
\]
This proves \eqref{aluo41}.
\end{proof}

\section{Estimate A of \eqref{AAA}}\label{76}
\setcounter{equation}{0}
\renewcommand{\theequation}{B.\arabic{equation}}

\setcounter{equation}{0}
\begin{proof}
Since $\Delta^2\eta_0-e^Z\eta_0
=
-\frac12Z^2e^Z$,  it follows that $A
=
\int_{\mathbb R^4}\Delta^2\eta_0\,dy$.
Suppose that $a:=\sqrt{8\sqrt6}$ and $\phi(y):=\eta_0(ay)$, then we have \[
\Delta^2\phi
-
V(y)\phi
=
-S(y)
\quad\text{in }\mathbb R^4.
\]
where
\[
V(y):=\frac{384}{(1+|y|^2)^4}, \qquad S(y):=
-\frac{3072\ln^2(1+|y|^2)}{(1+|y|^2)^4}.
\]
Hence, we find \[
L\phi:=\Delta^2\phi-V\phi=S.
\]
Since \[
\int_{\mathbb R^4}\Delta^2\eta_0(x)\,dx
=
\int_{\mathbb R^4}\Delta^2\phi(y)\,dy, 
\]then it follows that
\begin{equation}\label{A}
    A=\int_{\mathbb R^4}\Delta^2\phi\,dy.
\end{equation}
Next, we consider any orthogonal matrix \(O\in O(4)\) and define $\phi_O(y):=\phi(Oy)$. Now, we calculate $\zeta :=\phi_O-\phi$. Obviously, $\zeta $ satisfy \[
\Delta^2\zeta -V\zeta =0
\quad\text{in }\mathbb R^4.
\]
Since \[
\zeta\in
\operatorname{span}
\left\{
\partial_1 W,\partial_2 W,\partial_3 W,\partial_4 W,\frac{\partial W_\lambda(\frac{x}{\lambda})}{\partial \lambda}\Big|_{\lambda=1}
\right\},
\]
where
\[
W(y):=-4\ln(1+|y|^2),
\qquad
\frac{\partial W_\lambda(\frac{x}{\lambda})}{\partial \lambda}\Big|_{\lambda=1}:=4+y\cdot\nabla W.
\]

On the other hand, since $\eta_0(0)=0$ and $\nabla\eta_0(0)=0$, we have $\phi(0)=0$ and $\nabla\phi(0)=0$. Thus, it follows that $\zeta(0)=0$ and $\nabla\zeta(0)=0$. By direct computations, we have $\zeta\equiv0$. Therefore, \(\phi\) is invariant under arbitrary rotations; that is, \(\phi\) is a radial function. 
\par We write $\phi(y)=\phi(|y|)=\phi(r)$ and define $w(r):=\Delta\phi(r)$. 
Hence, it follows that \begin{equation}\label{jing1}
    \Delta w=\Delta^2\phi=V(r)\phi(r)+S(r).
\end{equation}
Since
\begin{equation}\label{phir}
    |\phi(r)|\le C(1+r)^\tau,
\qquad 0<\tau<1,
\end{equation}
then it follows that \[
V(r)\phi(r)+S(r)\in L^1(\mathbb R^4).
\]
So we can rewrite \eqref{jing1} as 
\begin{equation}\label{jing2}
(r^3w'(r))'
=
r^3\left(V(r)\phi(r)+S(r)\right).
\end{equation}
and there exist constant $L_0$ definition as 
\[
L_0:=
\int_0^\infty
s^3\left(V(s)\phi(s)+S(s)\right)\,ds<\infty,
\]such that \[
r^3w'(r)\to L_0.
\]
Hence, we have \[
w'(r)=\frac{L_0+o(1)}{r^3}.
\] and there exist a constant $C_0$, such that\[
w(r)=C_0-\frac{L_0}{2r^2}+o(r^{-2}).
\]
We claim $C_0=0$. If not, we have \(C_0\neq0\), then by \begin{equation}\label{new3.55}
    (r^3\phi'(r))'=r^3w(r).
\end{equation} it follows that $\phi(r)=O (\frac{C_0} n,li{8}r^2)$. This is a contraction with \eqref{phir}.
Thus, $C_0=0$ and \[
\Delta\phi(r)=w(r)
=
-\frac{L_0}{2r^2}+o(r^{-2}).
\]
Hence from \eqref{new3.55}, there exists a constant $A_0$, such that 
\begin{equation}\label{phirfinally}
    \phi(r)=A_0\ln r+O(1).
\end{equation}
We define $\Psi(r):=\frac{1-r^2}{1+r^2}$, then it follows that $L\Psi=\Delta^2\Psi-V\Psi=0$.
Since \begin{equation}\label{green}
    \int_{B_R}
\left(
\Psi\Delta^2\phi-\phi\Delta^2\Psi
\right)dy
=
\int_{\partial B_R}
\left[
\Psi\partial_\nu\Delta\phi
-\partial_\nu\Psi\,\Delta\phi
+\Delta\Psi\,\partial_\nu\phi
-\phi\,\partial_\nu\Delta\Psi
\right]d\sigma,
\end{equation}
and \[
\Delta^2\phi=V\phi+S,
\qquad
\Delta^2\Psi=V\Psi,
\] then it follows that
\[
\int_{B_R}\Psi S\,dy
=
\int_{\partial B_R}
\left[
\Psi\partial_\nu\Delta\phi
-\partial_\nu\Psi\,\Delta\phi
+\Delta\Psi\,\partial_\nu\phi
-\phi\,\partial_\nu\Delta\Psi
\right]d\sigma.
\]
Now, since $\Psi(R)\to -1$, as \(R\to\infty\), we have \[
\partial_r\Psi(R)=O(R^{-3}),
\qquad
\Delta\Psi(R)=O(R^{-4}),
\qquad
\partial_r\Delta\Psi(R)=O(R^{-5}).
\]
Hence from \eqref{phirfinally}, it follows that \[
\partial_r\Delta\phi(R)
=
-\frac{4A_0}{R^3}+o(R^{-3}), \quad
\Delta\phi(R)=\frac{2A_0}{R^2}+o(R^{-2}),
\quad
\partial_r\phi(R)=\frac {A_0} R+o(R^{-1}),
\quad
\phi(R)=A_0\ln R+O(1).
\]
Using \eqref{green}, we have
\[
\int_{\partial B_R}
\Psi\partial_\nu\Delta\phi\,d\sigma
\to
(-1)\left(-\frac{4A_0}{R^3}\right)
|\mathbb S^3|R^3
=
4A_0|\mathbb S^3|,
\]
and the other three terms are respectively \(o(1)\).
Hence, we find \begin{equation}\label{A0}
\int_{\mathbb R^4}\Psi S\,dy
=
4A_0|\mathbb S^3|.
\end{equation}
On the other hand, we have \[
\int_{B_R}\Delta^2\phi\,dy
=
\int_{\partial B_R}\partial_\nu\Delta\phi\,d\sigma
\to
\left(-\frac{4A_0}{R^3}\right)
|\mathbb S^3|R^3
=
-4A_0|\mathbb S^3|.
\]
Thus, we get \begin{equation}\label{Afinally}
    A
=
\int_{\mathbb R^4}\Delta^2\phi\,dy
=
-\int_{\mathbb R^4}\Psi S\,dy.
\end{equation}
Now, we have\[
\begin{aligned}
\int_{\mathbb R^4}\Psi S\,dy
&=
|\mathbb S^3|
\int_0^\infty
\frac{1-r^2}{1+r^2}
\left[
-\frac{3072\ln^2(1+r^2)}{(1+r^2)^4}
\right]
r^3\,dr \\
&=
-3072|\mathbb S^3|
\int_0^\infty
\frac{r^3(1-r^2)\ln^2(1+r^2)}
{(1+r^2)^5}
\,dr, 
\end{aligned}
\] and taking $t=r^2$, then  it follows that 
\begin{equation}\label{bian1}
    \int_0^\infty
\frac{r^3(1-r^2)\ln^2(1+r^2)}
{(1+r^2)^5}
\,dr
=
\frac12
\int_0^\infty
\frac{t(1-t)\ln^2(1+t)}
{(1+t)^5}
\,dt.
\end{equation}
We define
\[
J:=
\int_0^\infty
\frac{t(1-t)\ln^2(1+t)}
{(1+t)^5}
\,dt.
\]
and letting $u=\frac1{1+t}$, then we have 
\begin{equation}\label{J1}
    J=
\int_0^1
u(1-u)(2u-1)
\ln^2\frac1u\,du.
\end{equation}
Using the formulation \[
\int_0^1 u^m\ln^2\frac1u\,du
=
\frac{2}{(m+1)^3},
\] it follows that \begin{equation}\label{J2}
    \int_0^\infty
\frac{r^3(1-r^2)\ln^2(1+r^2)}
{(1+r^2)^5}
\,dr
=
\frac12J
=
-\frac{13}{288}.
\end{equation}
From \eqref{A0}, \eqref{Afinally}, \eqref{bian1}, \eqref{J1} and \eqref{J2} we have
\[
  A
=
-\int_{\mathbb R^4}\Psi S\,dy
=
-\frac{416}{3}|\mathbb S^3|
\]
\end{proof}

\vskip 0.5cm

\noindent\textbf{Acknowledgments} The authors are grateful to  thank Prof. Peng Luo  for  his generous academic guidance and many insightful discussions throughout this study.

\renewcommand\refname{References}
\renewenvironment{thebibliography}[1]{%
\section*{\refname}
\list{{\arabic{enumi}}}{\def\makelabel##1{\hss{##1}}\topsep=0mm
\parsep=0mm
\partopsep=0mm\itemsep=0mm
\labelsep=1ex\itemindent=0mm
\settowidth\labelwidth{\small[#1]}%
\leftmargin\labelwidth \advance\leftmargin\labelsep
\advance\leftmargin -\itemindent
\usecounter{enumi}}\small
\def\newblock{\ }
\sloppy\clubpenalty4000\widowpenalty4000
\sfcode`\.=1000\relax}{\endlist}
\bibliographystyle{model1b-num-names}

\end{document}